\setlist[enumerate]{label=\textnormal{(\roman*)}}
\newcommand{\NN}{\mathbb{N}}
\newcommand{\RR}{\mathbb{R}}
\newcommand{\cA}{\mathcal{A}}
\newcommand{\cC}{\mathcal{C}}
\newcommand{\cE}{\mathcal{E}}
\newcommand{\cF}{\mathcal{F}}
\newcommand{\cI}{\mathcal{I}}
\newcommand{\cL}{\mathcal{L}}
\newcommand{\cN}{\mathcal{N}}
\newcommand{\cO}{\mathcal{O}}
\newcommand{\cR}{\mathcal{R}}
\newcommand{\cY}{\mathcal{Y}}
\newcommand{\cZ}{\mathcal{Z}}
\newcommand{\loc}{\textnormal{sol}}
\newcommand{\ONE}{\mathbf{1}}
\newcommand{\lesssimD}{\lesssim_\delta}
\DeclareMathOperator{\sech}{sech}
\DeclareMathOperator{\sign}{sign}
\DeclareMathOperator{\SPAN}{span}
\newtheorem{theorem}{Theorem}
\newtheorem{lemma}{Lemma}[section]
\newtheorem{proposition}[lemma]{Proposition}
\theoremstyle{remark}
\newtheorem{remark}{Remark}[section]
\numberwithin{equation}{section}
\begin{document}

\title[Finite point blowup for critical gKdV]{Finite point blowup for the critical generalized Korteweg-de Vries equation}

\author[Y. Martel]{Yvan Martel}
\address{CMLS, \'Ecole Polytechnique, CNRS, Institut Polytechnique de Paris, 91128 Palaiseau, France}
\email{yvan.martel@polytechnique.edu}

\author[D. Pilod]{Didier Pilod}
\address{Department of Mathematics, University of Bergen, Postbox 7800, 5020 Bergen, Norway }
\email{Didier.Pilod@uib.no}

\begin{abstract}
In the last twenty years, there have been significant advances in the study of the blow-up phenomenon for the critical generalized Korteweg-de~Vries equation, including the determination of sufficient conditions for blowup,
the stability of blowup in a refined topology and the classification of minimal mass blowup.
Exotic blow-up solutions with a continuum of blow-up rates and multi-point blow-up solutions were also constructed. 
However, all these results, as well as numerical simulations, involve the bubbling of a solitary wave going at infinity at the blow-up time, which means that the blow-up dynamics and the residue are eventually uncoupled. 
Even at the formal level, there was no indication whether blowup at a finite point could occur for this equation.
In this article, we answer this question by constructing solutions that blow up in finite time under the form of
a single-bubble concentrating the ground state at a finite point with an unforeseen blow-up rate.
Finding a blow-up rate intermediate between the self-similar rate
and other rates previously known also reopens the question of which blow-up rates are actually possible for this equation.
\end{abstract}

\maketitle

\section{Introduction}

\subsection{Problem setting}
The objective of this article is to shed some new light on the blow-up problem for the critical generalized Korteweg-de Vries equation
(gKdV)
\begin{equation} \label{gkdv}
\partial_t u+\partial_x\big(\partial_x^2u+u^5\big)=0,\quad (t,x) \in \RR\times \RR.
\end{equation}
Note that if $u$ is a solution of~\eqref{gkdv} then for any $(\lambda,\sigma) \in(0,+\infty) \times \RR$, the function
\begin{equation} \label{scaling:inv}
v(t,x):=\pm \frac 1{\lambda^{\frac12}} u\left(\frac t{\lambda^3}, \frac{x-\sigma}\lambda \right)
\end{equation}
and the function $w(t,x)=u(-t,-x)$ are also solutions of~\eqref{gkdv}.
Recall that the mass $M(u)$ and the energy $E(u)$ of a solution $u$ of \eqref{gkdv} are formally conserved
\begin{align*}
M(u)&=\int_{\RR} u^2 \, dx \\
E(u)&=\frac12 \int_{\RR} (\partial_xu)^2 \, dx-\frac16 \int_{\RR} u^6 \, dx .
\end{align*}
The Cauchy problem for~\eqref{gkdv} is locally well-posed in the energy space $H^1(\RR)$ by~\cite{KPV,KPV2}.
For any $u_0 \in H^1(\RR)$, there exists a unique (in a certain class) maximal solution $u$ of~\eqref{gkdv} in $\cC([0,T), H^1(\RR))$ satisfying $u(0,\cdot)=u_0$.
Moreover, by \cite[Corollary 1.4]{KPV2}, if the maximal time of existence $T$ is finite then 
\[
\liminf_{t \uparrow T} (T-t)^{\frac 13}\|\partial_x u(t)\|_{L^2}>0.
\]

\smallskip

We define the function
\[
Q(x)=\left(3\sech^2(2x)\right)^{1/4}
\]
unique even positive $H^1$ solution of the equation
\begin{equation} \label{eq:Q}
-Q''+Q-Q^5=0 \quad \mbox{on $\RR$} .
\end{equation}
Recall the sharp Gagliardo-Nirenberg inequality~\cite{Wei}
\begin{equation} \label{sharpGN}
\frac 13 \int_{\RR} \phi^6 \le \left(\frac{\int_{\RR} \phi^2}{\int_{\RR} Q^2} \right)^2 \int_{\RR} (\partial_x\phi)^2, \quad \forall \, \phi \in H^1(\RR) 
\end{equation}
and the variational characterization of $Q$: for any $\phi\in H^1$,
\begin{equation}\label{var:char}
0< \|\phi\|_{L^2} \leq \|Q\|_{L^2} \mbox{ and } E(\phi)\leq 0 \implies \exists \lambda>0, \sigma\in \RR : 
\phi(x)= \pm\frac 1{\lambda^{\frac12}}Q\left(\frac {x - \sigma}{\lambda} \right).
\end{equation}
By~\eqref{sharpGN} and the conservation of mass and energy, for
any initial data $u_0 \in H^1(\RR)$ satisfying $\|u_0\|_{L^2} < \|Q\|_{L^2}$, the corresponding solution of~\eqref{gkdv} is global and bounded in $H^1(\RR)$.

\smallskip

For any $(\lambda,\sigma) \in(0,+\infty) \times \RR$, the function
\begin{equation}\label{def:sol} 
u(t,x)=\frac 1{\lambda^{\frac12}}Q\left(\frac 1{\lambda} \left( x - \frac t{\lambda^{2}} - \sigma\right)\right)
\end{equation}
is a solution of~\eqref{gkdv} called solitary wave or soliton. We observe the relation between the scale~$\frac 1\lambda$ of the soliton and
its speed of propagation $\frac 1{\lambda^2}$. 

\subsection{First bubbling results for gKdV}

For many nonlinear partial differential equations, like the semilinear wave equation, the semilinear heat equation and the 
nonlinear Schr\"odinger equation, blowup was proved for large classes of initial data by a contradiction argument involving a rather simple functional (like the virial identity for the nonlinear Schr\"odinger equation). Using this strategy, blowup is a consequence of an obstruction to global existence and little information on the blow-up phenomenon is obtained.
In the absence of such simple criterion of non global existence for the critical gKdV,
advances on the blow-up problem could only be made by addressing the blow-up phenomenon through \emph{bubbling} (or concentration) of the ground state. A first hint in favor of this strategy is given by explicit blow-up solutions 
for the mass critical nonlinear Schr\"odinger equation \eqref{def:SNLS}.
Another hint is suggested by \cite[Theorem 4]{We86} which shows using~\eqref{var:char} that
a blow-up solution of \eqref{gkdv} with the threshold mass $\|u_0\|_{L^2}=\|Q\|_{L^2}$ necessarily has the soliton behavior at the blow-up time
$T$
\[
\lambda^{\frac 12}(t) u (t, \lambda(t) x + \sigma(t)) \to \pm Q \quad \mbox{as $t\uparrow T$ in $H^1$ strong}
\]
for some functions $\sigma(t)\in \RR$ and $\lambda(t)\downarrow 0$ as $t\uparrow T$.
To go beyond the threshold mass $\|Q\|_{L^2}$, it is natural to restrict to 
solutions with mass slightly above the threshold, \emph{i.e.} satisfying
\begin{equation}\label{mass:close}
\|Q\|_{L^2} < \|u_0\|_{L^2} < (1+\delta) \|Q\|_{L^2}
\end{equation}
for $0<\delta\ll 1$.
Using a rigidity property of the gKdV flow around the family of solitons established in \cite{MaMejmpa}, the first proof of blowup in finite or infinite time was given in \cite{Mjams} for any negative energy initial data satisfying \eqref{mass:close}.
Then, a direct link was established between the blow-up phenomenon and the solitary wave in \cite[Theorem 1]{MaMe}, which states that
any blow up solution $u$ of \eqref{gkdv} satisfying \eqref{mass:close} has the following bubbling behavior
\begin{equation}\label{bubbling}
\lambda^{\frac 12}(t) u (t, \lambda(t) x + \sigma(t)) \rightharpoonup \pm Q \quad \mbox{as $t\uparrow T$ in $H^1$ weak.}
\end{equation}
where $0<T\leq \infty$ is the (possibly infinite) blow-up time.
The above weak convergence shows that the soliton $Q$ is the universal profile for the blowup phenomenon, but also allows for various possibilities of
small residue and functions $\lambda$ and $\sigma$. At that stage, it was unknown whether blow up could occur in finite or infinite time and what was the asymptotic behavior of the parameters $\lambda$ and
$\sigma$ close to the blow-up time.
However, it was proved that
\begin{equation}\label{sig:lam}
\lim_{t\uparrow T} \lambda^2(t) \sigma'(t) =1 .
\end{equation}
These general results later appeared to be optimal. Not only
blowup can occur either in finite or infinite time, justifying the alternative in~\cite{Mjams}, but also various blow-up rates are possible both for finite time and infinite time blowup, justifying the flexibility on $\lambda$ and $\sigma$ in~\eqref{bubbling}.

\subsection{Stable blowup and minimal mass blowup for gKdV}
First, it is proved in \cite{MaMeRa1} that
there exists a subset of initial data, included and open for $\|\cdot\|_{H^1}$ in the set
\[
\cA = \left\{ u_0 = Q+\varepsilon_0 : \varepsilon_0 \in H^1,\ \|\varepsilon_0\|_{H^1} <\delta_0 \mbox{ and } \int_{x>0} x^{10} \varepsilon_0^2 dx <1\right\}
\]
where $\delta_0>0$ is small,
leading to blow-up solutions of \eqref{gkdv} such that
\[
\lim_{t\uparrow T} \left(T-t\right) \|\partial_x u(t)\|_{L^2} = \frac {\|Q'\|_{L^2}}{\ell}
\]
for some $\ell=\ell(u_0)>0$.
Because of the openness property, we call this phenomenon \emph{stable blowup}. 
The asymptotic behavior of these solutions at the 
blow-up time $T$ is typical of the bubbling phenomenon. Indeed, there exists a function $r^\star\in H^1$ such that
\begin{equation}\label{uSS}
\lim_{t\uparrow T} \left\{ u(t,\cdot) - \frac 1{\lambda^{\frac 12}(t)}
Q\left(\frac{\cdot-\sigma(t)}{\lambda(t)} \right)\right\} = r^\star \quad \mbox{in $L^2$ strong}
\end{equation}
for some functions $\lambda$, $\sigma$ satisfying 
\[
\lim_{t\uparrow T} \frac{\lambda(t)}{T-t} = \ell
\quad \mbox{and} \quad \lim_{t\uparrow T} \left(T-t\right)\sigma(t) = \frac 1{\ell^2}\,.
\]

Second, \cite[Theorem 1.3]{MaMeRa2} settles the case of blowup at the threshold mass $\|u_0\|_{L^2}=\|Q\|_{L^2}$
showing the existence of a unique (up to invariances) blow-up solution $S(t)$ of
\eqref{gkdv} with the mass of $Q$, blowing up in finite time with the rate $\|S(t)\|_{H^1} \sim C(T-t)^{-1}$ and concentrating a soliton at the position $\sigma(t) \sim C (T-t)^{-1}$ as $t\uparrow T$. 
A precise description of the time and space asymptotic behavior of $S$ in~\cite{CoMa1}
has permitted to construct multi-point bubbling in~\cite{CoMa2}.
Such multi-bubble solutions provide examples of blowup with an arbitrary number of blow-up bubbles,
all propagating simultaneously to~$\infty$ as $t\uparrow T$.

\subsection{Exotic blowup for gKdV}

Because of assumption $u_0\in \cA$,
the above stable blow-up theory does not say the whole story in the $H^1$ neighborhood of the family of solitons.
From~\cite{MaMeRa3}, there exists a large class of \emph{exotic} finite time blow-up solutions of \eqref{gkdv}, close to the family of solitons, with blow-up rates of the form $\|\partial_x u(t)\|_{L^2} \sim (T-t)^{-\nu}$ for any $\nu\geq \frac {11}{13}$.
Such blow-up rates are generated by the nonlinear interactions of the bubbling soliton with an explicit slowly decaying tail added to the initial data. Because of the tail, these $H^1$ solutions fail to belong to $\cA$ and the trichotomy of \cite{MaMeNaRa,MaMeRa1} does not apply.
The value $\frac {11}{13}$ is not optimal in~\cite{MaMeRa3}, but a non trivial extension of the proof would be needed to improve it significantly, for example to approach the special value $\nu = \frac 12$.
For a one-bubble blow-up solution, by the relation \eqref{sig:lam} between the position $\sigma$ of the bubble and its blow up rate $\lambda$,
the rate $\nu = \frac 12$ is critical:
if $\nu\geq \frac 12$ then the soliton goes at $\infty$ since $\lim_{T}\sigma= \infty$, while for $\frac 13 < \nu < \frac 12$, the soliton has to stay in a compact interval.
The existence of bubbling solutions with $\nu= \frac 13$ was ruled out in~\cite{MaMe}, but
the range $\frac 13 < \nu < \frac 12$ was left open.

\subsection{Supercritical case and numerical experiments}
Numerical experiments on blowup for \eqref{gkdv} and its supercritical counterpart
$\partial_t u + \partial_x (\partial_x^2 u + u^p)=0$ for $p>5$, are reported in several articles.
 Pioneering numerical studies \cite{BDKM,DM} and more recent works~\cite{KP} exhibited self-similar blow-up for the supercritical equation
 which was confirmed rigorously by~\cite{Koch,Lansuper} for $p>5$ close to $5$.
Self-similar blowup means $\|\partial_x u(t)\|_{L^2}\sim (T-t)^{-\frac 13}$ close to the blow-up time and in particular blow-up at a finite point.
This is the only known blowup rate for the supercritical case, in contrast with the critical case where it was ruled out in~\cite{MaMe}.
The article~\cite{Amodio} is especially devoted to the supercritical case in the limit $p\downarrow 5$.
In the critical case, the article~\cite{KP} corroborates the stable blowup described in~\cite{MaMeRa1}, even for large initial data.

\subsection{Main result}

As a conclusion of the above review, we observe that despite many efforts spent on the blowup phenomenon for critical gKdV, the literature does not give any evidence on whether blowup at a finite point could occur, even at the formal level.
The main result of this article answers this question by proving the existence of a family of bubbling solutions of~\eqref{gkdv} with the key feature that the blow-up point converges to a finite value at the blow-up time, as a consequence of an unforeseen blow-up rate.

\begin{theorem} \label{th:1}
For any $\delta>0$, there exist $T>0$ and a solution $u\in \cC([0,T),H^1(\RR))$ of~\eqref{gkdv} of the form
\[
u(t,x) = \frac 1{(T-t)^{\frac 15}} Q\left(\frac {x-\sigma(t)}{(T-t)^{\frac 25}}\right) + r(t,x),
\]
where $\lim_{t\uparrow T} \sigma(t) =0$
and the function $r$ satisfies
\begin{equation*}
\sup_{t\in [0,T)}\|r(t)\|_{L^2}\leq \delta\quad \mbox{and}\quad
\lim_{t\uparrow T} \left(T-t\right)^{\frac 25} \|\partial_x r(t)\|_{L^2} =0.
\end{equation*}
In particular, $u$ blows up at time $T$ and
\[
\lim_{t\uparrow T} \left(T-t\right)^{\frac 25} \|\partial_x u(t)\|_{L^2} = \|Q'\|_{L^2}.
\]
\end{theorem}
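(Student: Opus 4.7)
\medskip

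\textbf{Proof plan.} The strategy follows the bubbling construction methodology of~\cite{MaMeRa1,MaMeRa3,CoMa2}, refined to capture the specific rate $\nu=\tfrac25$. A useful preliminary observation: in renormalized time $s=\int_0^t\lambda^{-3}(\tau)\,d\tau$, the rate $\lambda(t)=(T-t)^{2/5}$ is equivalent to the scaling relation $b:=-\lambda_s/\lambda\sim \tfrac25\sqrt{\lambda}$, i.e.\ to a second-order regime $\lambda_s\sim -\lambda^{3/2}$; integrating this together with $\lambda^2\sigma'=1+o(1)$ from \eqref{sig:lam} yields $\lambda(s)\sim s^{-2}$, $T-t\sim s^{-5}$, and $\sigma(t)=-5(T-t)^{1/5}+o(1)\to 0$, consistent with the statement.

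Building on this, I would look for a solution of the form
\[
u(t,x)=\frac1{\lambda^{1/2}(t)}\,Q_b\!\left(\frac{x-\sigma(t)}{\lambda(t)}\right)+V(t,x)+\varepsilon(t,x),
\]
where $Q_b=Q+bP_1+b^2P_2$ is a refined blow-up profile constructed by inverting the linearized operator $L=-\partial_y^2+1-5Q^4$ in the spirit of the MMR hierarchy, and where $V$ is a slowly decaying tail localized on the appropriate side of the moving bubble, tuned so that its nonlinear interaction with the soliton furnishes precisely the source term needed to force $b_s=-c\,b^2+(\text{small})$. The initial data is of the form $u(0)=Q+(\text{small $L^2$ tail})$ of size $\leq\delta$ in $L^2$, in agreement with the bound $\|r(t)\|_{L^2}\leq\delta$ in the theorem. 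Imposing three orthogonality conditions on $\varepsilon$ (against $\Lambda Q$, $Q'$, and a third direction chosen to isolate the $b$-mode) fixes modulation ODEs for $(\lambda,\sigma,b)$ and reduces the problem to controlling the remainder.

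The core work is to close a bootstrap on a maximal time interval $[s_0,s^*]$ for estimates of the schematic form $\|\varepsilon(s)\|_{H^1}\lesssim \lambda^\alpha(s)$, using the coercivity of $L$ on the orthogonal subspace together with monotonicity formulas for a localized energy and a suitably weighted mass/virial functional, as pioneered in~\cite{MaMeRa1} and adapted in~\cite{CoMa2}. Once these controls are in place, a Brouwer-type shooting/fixed-point argument tunes the unstable $b$-mode in the initial data so that the modulation system closes on the full interval $[0,T)$, producing the asserted solution and its asymptotic behaviour.

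The main obstacle I anticipate is structural, not just computational: because $\nu<\tfrac12$, the soliton center $\sigma(t)$ stays in a \emph{bounded} interval, so the tail $V$ and the bubble remain coupled throughout blow-up, ruling out the one-sided cut-off simplifications that underlie the analyses in~\cite{MaMeRa1,MaMeRa3}. This forces the monotonicity functionals to be balanced on both sides of $\sigma(t)$, and since the natural smallness parameter driving the bootstrap is $\sqrt{\lambda}\to 0$ rather than $b$ itself, there is essentially no slack in the closure of the estimates; I expect this to be the most delicate point of the proof.
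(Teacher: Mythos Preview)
Your high-level heuristics are correct: the rate $\nu=2/5$ corresponds to $\lambda_s/\lambda\sim c\lambda^{1/2}$, hence $\lambda(s)\sim s^{-2}$ and $\sigma(t)\to 0$. But the mechanism you propose to realize this law is the one from~\cite{MaMeRa3}, and that is where the proposal breaks down. In the MMR framework you describe---a profile $Q_b=Q+bP_1+b^2P_2$ together with a one-sided slowly decaying tail tuned to force $b_s=-cb^2$---the exponent $\nu$ is determined by the decay rate of the tail, and this construction only reaches $\nu\geq 11/13$ (as the paper recalls). A one-sided tail cannot produce the source term of order $\lambda^{1/2}$ needed here; the bubble and the tail would decouple too fast, exactly because the soliton does \emph{not} escape to infinity.

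The paper's key structural novelty is a different ansatz. The leading correction to $Q$ is $v_1(s,y)=-2\lambda^{1/2}(s)\,\Theta(\lambda(s)y+\sigma(s))\,A_1(y)$, where $A_1=P-m_0(1+R)$ is a bounded \emph{odd} function solving $(\cL A_1)'=\Lambda Q$ with nonzero limits $\mp m_0$ on \emph{both} sides (it is explicitly not localized for $y>0$, in contrast with the function $P$ used in~\cite{MaMeRa1,MaMeRa3}). The cutoff $\Theta$ is in the \emph{original} variable $x=\lambda y+\sigma$, not in $y$. This two-sided structure, together with the oddness of $A_1$ and the specific power $\lambda^{1/2}$, is what generates the law $\lambda_s/\lambda\approx 2\lambda^{1/2}$ directly and makes several error terms vanish by parity; the parameter $b$ in the paper is a small secondary correction (of size $|s|^{-5}\log|s|$ in the bootstrap), not the driver of the dynamics. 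Your intuition that ``there is essentially no slack'' is vindicated: the construction requires an expansion to fifth order in $\lambda^{1/2}$ with $\log\lambda$ corrections, and the identity $\Omega_5=0$ forces the logarithmic terms.

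A secondary point: the paper uses a compactness argument (a sequence of backward solutions and a weak limit) rather than Brouwer shooting, and explicitly obtains no stability property, even up to finite codimension. Your shooting proposal is not a priori wrong, but it is not what is done here, and given the absence of any identified unstable direction in this regime it is not clear it would close.
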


We refer to Section~\ref{S:6} for more detailed asymptotics, including
$\lim_{t\uparrow T} (T-t)^{-\frac15} \sigma(t) =-5$.
\begin{remark}\label{rk:1.1}
From Theorem~\ref{th:1} and a rescaling argument, we remark that finite point blowup occurs for some initial data in $H^1$ arbitrarily close to $Q$ in $H^1$. 
Let $\delta>0$ and a solution $u$ as in Theorem~\ref{th:1} with blow-up time $T>0$.
Let $t_0\in [0,T)$ and $\lambda_0=(T-t_0)^{\frac 25}$. By the invariances, the function
$\widetilde u (t,x) = \lambda_0^\frac 12 u\left(\lambda_0^3 t + t_0 , \lambda_0 x + \sigma(t_0) \right)$
is still a solution of \eqref{gkdv} on $[0,\widetilde T)$ where $\widetilde T = \frac {T-t_0}{\lambda_0^3}=(T-t_0)^{-\frac 15}$
and it blows up at time $\widetilde T$ at a finite point.
Moreover, by Theorem~\ref{th:1},
\[
\widetilde u(t,x)= \mu^\frac 12(t) Q\left( \mu(t) x - y(t)\right)+ \lambda_0^\frac 12 r\left(\lambda_0^3 t + t_0 , \lambda_0 x + \sigma(t_0) \right)
\]
where
\[
\mu(t) = \frac{\lambda_0}{(T-\lambda_0^3 t - t_0)^{\frac 25}} \quad\mbox{and}\quad
y(t) =\frac{\sigma(\lambda_0^3 t + t_0)-\sigma(t_0)}{(T-\lambda_0^3 t - t_0)^\frac25}.
\]
Thus, at $t=0$, the solution $\widetilde u$ satisfies
\[
\| \widetilde u (0) - Q \|_{L^2} = \|r(t_0)\|_{L^2} \leq \delta,\quad
\| \partial_x \widetilde u (0) - Q' \|_{L^2} = (T-t_0)^{\frac 25} \|\partial_x r(t_0)\|_{L^2}.
\]
Taking $\delta>0$ small and $t_0\in [0,T)$ close enough to $T$, $\widetilde u(0)$ is arbitrarily close to $Q$ in $H^1$.
It follows from the classification in~\cite{MaMeRa1} that $\widetilde u(0)\not\in \cA$ (see also Section~\ref{S:Sketch}).
\end{remark}

Figure 1 recapitulates known blow-up rates for \eqref{gkdv}, \emph{i.e.} values $\nu>0$ for which there
exist $H^1$ blow-up solutions $u$ of \eqref{gkdv} with
$\lim_{t\uparrow T} (T-t)^\nu \|\partial_x u \|_{L^2} = C$ for $C>0$.

\smallskip

\hskip -0.5cm
\begin{tikzpicture}[scale=1]
\draw[arrows=->,line width=.4pt,thick](-4,1.5)--(-4,1);
\draw[arrows=->,line width=.4pt,thick](2.1,0.6)--(1.6,0.1);
\draw (4.04,0.85) node{{\footnotesize Stable blowup in $\cA$ \cite{MaMeRa1}}};
\draw (4.25,0.4) node{{\footnotesize Minimal mass blow up \cite{MaMeRa2}}};
\draw [dashed] (-7.8,0) -- (-5.3,0);
\draw (-5.13,0) -- (0.3,0);
\draw [very thick] (0.3,0) -- (6.5,0);
\draw [dashed, very thick] (6.5,0) -- (7.5,0);
\draw (-5.2,0) circle (2pt);
\filldraw (1.5,0) circle (2pt);
\filldraw (-4,0) circle (2pt);
\draw (-2.5,0.1) -- (-2.5,-0.1);
\draw [very thick] (0.3,0.1) -- (0.3,-0.1);
\draw (-7.8,0.1) -- (-7.8,-0.1);
\draw (-7.8,0.6) node{{\footnotesize $0$}};
\draw (-2.5,0.6) node{{\footnotesize $\frac 12$}};
\draw (-4,0.6) node{{\footnotesize $\frac 25$}};
\draw (0.3,0.6) node{{\footnotesize $\frac {11}{13}$}};
\draw (-5.2,0.6) node{{\footnotesize $\frac 13$}};
\draw (1.5,0.6) node{{\footnotesize $1$}};
\draw (7,0.6) node{{\footnotesize $\nu\to\infty$}};
\draw (-4,1.8) node{{\footnotesize Theorem \ref{th:1}}};
\draw [decorate,decoration={brace,amplitude=10pt}] (0.3,1.2) -- (7.5,1.2) node [black,midway,xshift=0.00cm,yshift=0.6cm]
{\footnotesize Existence proved in \cite{MaMeRa3}};
\draw [decorate,decoration={brace,amplitude=10pt}] (7.5,-0.2) -- (-2.5,-0.2) node [black,midway,xshift=0.00cm,yshift=-0.7cm]
{\footnotesize Blow-up point at $\infty$};
\draw [decorate,decoration={brace,amplitude=10pt}] (-2.5,-0.2) -- (-5.2,-0.2) node [black,midway,xshift=0.00cm,yshift=-0.7cm]
{\footnotesize Finite blow-up point};
\draw [decorate,decoration={brace,amplitude=10pt}] (-7.8,1.2) -- (-5.2,1.2) node [black,midway,xshift=0.00cm,yshift=0.6cm]
{\footnotesize Ruled out \cite{KPV2,MaMe}};
\draw [decorate,decoration={brace,amplitude=5pt}] (2.35,0.2) -- (2.35,1.1) node {\footnotesize};
\end{tikzpicture}

\begin{center}
Figure 1. Blow-up rates for the critical gKdV equation
\end{center}

Now, we discuss values $\nu>\frac 13$ that were not considered in previous works.
First, extending the method in \cite{MaMeRa3} should provide the whole range $\nu\in [\frac 12,\infty)$.
Second, by constructing a solution with the blow-up rate $\nu=\frac 25$, the present paper reopens the question of which blow-up rates are possible for the equation, even close to the self-similar rate.
We conjecture that all blow-up rates are possible in the range $\nu\in (\frac 13,\frac 12)$.
Since the value $\nu=\frac 25$ corresponds to a rather exceptional structure, described in Section~\ref{S:Sketch}, we expect that some key additional ingredients will be needed to obtain the range 
$\nu\in (\frac 13,\frac 12)$ or a substantial part of it.
Non power-like blow-up rates may also be possible.

\subsection{Comparison with NLS}

We briefly discuss the blow-up problem for the $L^2$ critical nonlinear Schr\"odinger equation (NLS), 
restricting ourselves to the one-dimensional case
\begin{equation}\label{nls}
i \partial_t u + \partial_x^2 u + |u|^4 u = 0,\quad (t,x)\in \RR\times \RR.
\end{equation}
We refer to \cite{Cabook} for general information concerning \eqref{nls}.
The first notable fact concerning blowup is the existence of the explicit blow-up solution
\begin{equation}\label{def:SNLS}
S_\textnormal{NLS}(t,x)=\frac 1{T-t} e^{-\frac{i|x|^2}{4(T-t)} +\frac{i}{T-t}} Q\left( \frac{x}{T-t}\right)
\end{equation}
which has minimal mass for blowup, \emph{i.e.} $\|S_\textnormal{NLS}(0)\|_{L^2}=\|Q\|_{L^2}$, and satisfies the blow-up behavior
$\|\partial_x S_\textnormal{NLS}(t)\|_{L^2} \sim C(T-t)^{-1}$, often called \emph{conformal blowup}.
Moreover, it was proved in~\cite{Me93} that $S_\textnormal{NLS}$ is the unique (up to the invariances of \eqref{nls}) blow-up solution with the minimal mass.
Then, other constructions of solutions with the blow-up rate $(T-t)^{-1}$ appeared in \cite{BW} (see also \cite{KS2,MRS}
for instability properties).
In a different direction, the study of the so-called $\log\log$ blowup for \eqref{nls} has a long history, where formal arguments and numerical experiments long prevailed, see \cite{SulemSulem}.
Rigorous proofs first appeared in \cite{MeRa04,MeRa05,MeRa05bis,MeRa06,MeRa07,P}. In particular, it is known that there exists an open set in $H^1$ of initial data (including negative energy functions close to the soliton) leading to blowup in finite time with the $\log \log$ speed
\[
\| \partial_x u (t) \|_{L^2} \sim C \sqrt{\frac {\log|\log (T-t)|}{T-t}} \quad \mbox{as $t\uparrow T$}
\]
where $C$ is a universal constant.
The $\log\log$ speed is a slight perturbation of the self-similar blowup $(T-t)^{-\frac 12}$ for NLS.
Moreover, it follows from \cite{Ra} that a finite time blow-up solution of \eqref{nls} with \eqref{mass:close} either enjoys the $\log \log$ blowup, or blows up with a rate $\|\partial_x u(t)\|_{L^2} \geq C (T-t)^{-1}$, which represents an important gap in possible blowups.
Last, we point out that there is only one example of blowup different from the $\log \log$ and the conformal blowups, of the form $\|\partial_x u(t)\|_{L^2} \sim C|\log (T-t)| (T-t)^{-1}$ but involving at least two interacting bubbles, constructed in~\cite{MaRa}.
Figure 2 recapitulates this information.

\smallskip

\begin{tikzpicture}[scale=1]
\draw[arrows=->,line width=.4pt,thick](-0.85,1.3)--(-0.85,0.3);
\draw[arrows=->,line width=.4pt,thick](4,1.3)--(4,0.8);
\draw [dashed] (-6,0) -- (-1,0);
\draw (-1,0) -- (7.5,0);
\draw (-6,0.1) -- (-6,-0.1);
\draw (-1,0.1) -- (-1,-0.1);
\draw [dashed] (7.5,0) -- (8.5,0);
\filldraw (-0.85,0) circle (2pt);
\filldraw (4,0) circle (2pt);
\draw (-6,0.6) node{{\footnotesize $0$}};
\draw (-1,0.6) node{{\footnotesize $\frac 12$}};
\draw (4,0.6) node{{\footnotesize $1$}};
\draw (4,1.5) node{{\footnotesize Conformal blowup \eqref{def:SNLS}, \cite{BW}}};
\draw (-0.85,1.5) node{{\footnotesize Log-log blowup \cite{MeRa05bis,MeRa06,P}}};
\draw (8,0.6) node{{\footnotesize $\nu\to\infty$}};

\draw [decorate,decoration={brace,amplitude=10pt}] (3.9,-0.2) -- (-0.8,-0.2) node [black,midway,xshift=0.00cm,yshift=-0.7cm]
{\footnotesize Ruled out near one bubble \cite{Ra}};
\draw [decorate,decoration={brace,amplitude=10pt}] (-0.9,-0.2) -- (-6,-0.2) node [black,midway,xshift=0.00cm,yshift=-0.7cm]
{\footnotesize Ruled out};
\end{tikzpicture}

\begin{center}
Figure 2. Blow-up rates for the mass critical NLS equation with one bubble
\end{center}

Several differences with the gKdV case are thus apparent. First, the existence of a continuum of blow-up rates
 has already been proved for gKdV but is open for NLS. Moreover, for critical gKdV, the existence of a gap property in blow-up rates as the one stated for NLS now seems unlikely. 
Last, the influence of the choice of a weighted topology is clearly established for gKdV but not for NLS.

\subsection{Related results for gKdV}
In the framework of stable blowup, 
as a consequence of~\cite{MaMeNaRa, MaMeRa1}, there exists a local $\cC^1$ co-dimension one manifold included in 
$\cA$ which separates the stable blow-up behavior from 
solutions that eventually exit the soliton neighborhood.
The solutions on the manifold are global in time and converge in a local norm to the family of solitary waves.
In the same context, the continuation of solutions after the blow-up time was studied in \cite{Lan}.
The exact large time behavior of solutions exiting any small soliton neighborhood (up to invariances)
is unknown even if scattering is expected. This question is related to the so-called \emph{scattering conjecture}, saying
 that any solution with $\|u_0\|_{L^2}< \|Q\|_{L^2}$ should actually scatter. Such questions were successfully addressed for the critical NLS equation but are still largely open for gKdV. In~\cite{KKSV,Ta}, this conjecture was first studied both for focusing and defocusing cases. In~\cite{Do}, this conjecture is fully solved for the defocusing critical gKdV.
For the fosusing case, it is proved in ~\cite{DG} that for any $H^1$ initial data with mass smaller than the mass of the soliton and close to the soliton in $L^2$ norm, the corresponding solution of \eqref{gkdv} must eventually exit any small soliton neighborhood. 
See also \cite[Theorem~1.3]{MuPo} and its proof for a related result.

\smallskip

Concerning blow-up properties, it is proved in~\cite{Pi} that blow-up solutions of \eqref{gkdv}
concentrate at least the mass of the ground state at the blow-up time.
The long time behavior of solutions close to the soliton familly is also studied for \emph{saturated nonlinearities} of the form $u^5 - \gamma |u|^{p-1} u$, for fixed $p>5$, in the limit $\gamma\downarrow 0$~in~\cite{Lanstar}.
In~\cite{MaPi}, a different type of solutions of \eqref{gkdv},
called \emph{flattening solitons}, are constructed : for any $\nu\in(0,\frac 13)$,
there exist global solutions, arbitrarily close to $Q$, such that as $t\to \infty$,
\[
u(t,x) = t^{-\frac \nu2}Q\left( t^{-\nu} (x-\sigma(t)) \right) + r(t,x)
\]
where $\sigma(t)\sim C t^{1-2\nu}$ and the function $r(t)$ is small in $L^2$ and converges to $0$ locally in the neighborhood of the soliton.

\smallskip

The literature concerning \emph{type II blowup} (in contrast with ODE blowup) for energy critical wave equations and parabolic problems is also vast and beyond the scope of this brief review.

\subsection{Sketch of the proof}\label{S:Sketch}
We construct the solution by compactness, passing to the weak limit in a sequence of solutions close to an appropriate ansatz and defined backwards in time on a uniform interval of time. We refer to~\cite{Me,RaSz} for pioneering works using this strategy in the blow-up context and to the subsequent articles~\cite{CoMa1,LeMaRa,MaPimBO}.
By this method, no stability property is obtained, even up to a finite number of instability directions, unlike in~\cite{MaMeRa1,MaMeRa3,MaPi}.

\smallskip

For the simplicity of notation, we first look for a solution $U$ of~\eqref{gkdv} blowing up at 
$(t,x)=(0,0)$.
As usual in investigating blow-up phenomenon, we introduce rescaled variables $(s,y)$, 
$s<0$ and $y\in \RR$, setting
\begin{equation} \label{u:w}
U(t,x) = \frac 1{\lambda^{\frac 12}(s)} w(s,y) ,\quad \frac {ds}{dt}= \frac 1{\lambda^3},\quad y = \frac {x-\sigma(s)}{\lambda(s)}.
\end{equation}
The time dependent parameters $\lambda>0$, $\sigma\in \RR$ are to be determined and the function $w$ satisfies the rescaled equation
\begin{equation} \label{rescaled}
\partial_s w + \partial_y ( \partial_y^2 w - w + w^5) 
- \frac{\lambda_s}{\lambda}\left( \frac w 2 + y \partial_y w \right)
- \left( \frac{\sigma_s}{\lambda} - 1 \right) \partial_y w =0.
\end{equation}
Setting $w=Q+v$ and using \eqref{eq:Q}, the small function $v$ satisfies
\begin{equation}\label{eq:v}
\partial_s v - \partial_y ( \cL v ) 
-\frac{\lambda_s}{\lambda}\Lambda Q
-\left( \frac{\sigma_s}{\lambda} - 1 \right) Q' =\mbox{nonlinear terms}
\end{equation}
where the operators $\Lambda$ and $\cL$ are defined in \eqref{def:lambda} and \eqref{def:L}.

\smallskip

To define a suitable approximate solution, we use an expansion $v=v_1+v_2+\cdots$ in powers of $\lambda^{\frac 12}$ 
up to a sufficiently high order. The main novelty lies on the definition of  $v_1$
\begin{equation}\label{def:v1}
v_1(s,y) = - 2 \lambda^{\frac 12}(s) \Theta \left( \lambda(s) y + \sigma(s)\right) A_1(y)
\end{equation}
where the function $A_1$ is odd, such that $(\cL A_1) ' = \Lambda Q$ and $\lim_{\pm\infty} A_1 = \mp m_0$
($m_0$ is a positive constant) and $\Theta$ is a smooth cut-off function equal to $1$ in a small neighborhood of $0$.
The introduction of such a function $A_1$ is related to the existence of a \emph{resonance} for the operator 
$\partial_x\cL$ of the form $1+R$ where $R$ is a Schwartz function and $\partial_x \cL (1+R)=0$.
Using a function~$A_1$ which is not localized for $y>0$, in contrast with the function $P$ used in \cite{MaMeRa1,MaMeRa2,MaMeRa3},
allows new blow-up constructions for critical gKdV. Moreover, such an ansatz justifies formally the claim that $\widetilde u(0)\not\in \cA$ in Remark~\ref{rk:1.1}. We refer to Lemma~\ref{le:2.1} for the definition of these functions.
Observe also that $\lambda^{\frac 12}(s) \Theta \left( \lambda(s) y + \sigma(s)\right)$ is the rescaled counterpart of $\Theta(x)$.

\smallskip

Now, we derive the blow-up rate obtained with this ansatz.
Using that $s$-differentiation as well as the $y$-differentiation of the cut-off function
$\Theta \left( \lambda(s) y + \sigma(s)\right)$ are negligible, we have
\[
\partial_s v_1 - \partial_y ( \cL v_1 ) \approx
- 2\lambda^{\frac 12}(s) \Theta \left( \lambda(s) y + \sigma(s)\right) \Lambda Q(y) .
\]
Inserting this in the equation \eqref{eq:v} of $v$, the following approximate blow-up law appears
\[
\frac{\lambda_s}{\lambda} \approx 2 \lambda^{\frac 12} .
\]
We identify a special solution $\lambda(s) = s^{-2}$ (recall that $s<0$).
Going back to the original time variable using $dt = \lambda^3(s) ds$, this gives 
blowup in finite time $t=0$ with
\begin{equation}\label{eq:1.16}
t \approx \frac 1{5|s|^{5}}, \quad \lambda(t) \approx (5t)^{\frac 25}, \quad
\sigma(t)\approx (5t)^{\frac 15}.
\end{equation}
For $T>0$ small, the solution $u$ of Theorem~\ref{th:1} is then given by $u(t,x) = \frac 1{5^{1/2}} U\left( \frac{T-t}{5^3} , -\frac{x}5\right)$.

\smallskip

Finally, we discuss more technical aspects of the proof.
First, in the expression \eqref{def:v1} of~$v_1$, both the oddness of the function $A_1$ and the power in the multiplicative factor $\lambda^{\frac 12}$ are decisive in vanishing error terms
(see the proof of Proposition~\ref{pr:error}).
We also point out that a key ingredient of the proof is a sharp control of the variation of the energy of the approximate solution based on a sufficiently small error term.
In Section \ref{Sec:5}, the difference between the solution and the ansatz is controlled uniformly
using a variant of the virial-energy functional first introduced in~\cite{MaMeRa1}.
Last, it is relevant to write the ansatz in the original variables. 
From~\eqref{u:w}, we have formally
\begin{equation*}
U(t,x) \approx \frac 1{\lambda^{\frac 12}(t)} Q \left(\frac{x-\sigma(t)}{\lambda(t)}\right) 
- 2 \Theta(x) A_1\left(\frac{x-\sigma(t)}{\lambda(t)}\right) .
\end{equation*}
By~\eqref{eq:1.16} and the asymptotic properties of $A_1$ in \eqref{def:A1}, the first correction term of the ansatz  behaves like $r_*(x)= 2 m_0 \Theta(x) \sign(x)$ at the blow-up time where $m_0>0$ is a constant~\eqref{def:m0}.
Such a discontinuous blow-up residual is in contrast with the $H^1$ regularity of the asymptotic residual corresponding to the stable blowup~\eqref{uSS}.
Closeness to a self-similar rate is generally associated to a residue with low regularity, see~\cite{MeRa07} for the mass critical NLS equation and \cite{HR,JJ,JL1,JL2,KS,KST1,KST2,Ro} for related observations in the energy critical wave-type models.

\subsection{Notation}
Throughout the paper, we will denote by $c$ a positive constant independent of $\delta$ which may change from line to line.
The notation $a \lesssim b$ means that $a \le c b$.
The notation $a \lesssimD b$ will be used for the estimate $a \le c_\delta b$, where the constant $c_\delta$ may depend on $\delta$.
\\
We use the notation $y\mapsto \lfloor y \rfloor$ for the floor function which maps $y$ to the greatest integer less than or equal to $y$.\\
For $1 \le p \le +\infty$, $L^p(\RR)$ denotes the standard Lebesgue spaces. 
For $f,g \in L^2(\RR)$ two real-valued functions, we denote the scalar product 
$(f,g)=\int_{\RR} f(x)g(x) dx$.
From now on, for simplicity of notation, we write $\int$ instead of $\int_{\RR}$ and we often omit~$dx$.\\
Let $\chi:\RR\to [0,1]$ be a $\cC^{\infty}(\RR)$ nondecreasing function such that
\begin{equation} \label{def:chi}
\chi_{|(-\infty,-2)} \equiv 0 \quad \text{and} \quad \chi_{|(-1,+\infty)}\equiv 1 .
\end{equation}
We define the weight function $\omega:\RR\to\RR$ by
\[
\omega(y) = e^{-\frac {|y|}2}
\]
and the weighted norm
\begin{equation} \label{def:L2sol}
\|f\|_{L^2_\loc}=\left(\int f^2(y) e^{-\frac{|y|}{10}} dy \right)^ {\frac12} .
\end{equation}
We introduce the generator of the scaling symmetry 
\begin{equation} \label{def:lambda}
\Lambda f=\frac12 f +yf' 
\end{equation}
and for any $k \in \NN$,
\begin{equation} \label{def:Lambda:k}
\Lambda_kf = \frac{1-k}2f +yf'.
\end{equation}
We also define the linearized operator $\cL$ around the ground state by 
\begin{equation} \label{def:L}
\cL f=-f''+f-5Q^4f .
\end{equation}
We define the universal constant 
\begin{equation} \label{def:m0} 
m_0:=\frac14 \int Q >0.
\end{equation}
Let $\cY$ be the set of functions $\phi:\RR\to\RR$ of class $\cC^{\infty}(\RR)$ such that
\begin{equation*}
\forall \, k \in \NN, \, \exists \, C_k > 0, \, r_k \ge 0 \text{ such that } |\phi^{(k)}(y)| \le C_k(1+|y|)^{r_k}e^{-|y|}, \ \forall \, y \in \RR .
\end{equation*}
For any $k\geq 0$, we fix a function $z_k:\RR\to[0,\infty)$ of class $\cC^\infty$ such that
$z_k(y)=|y|^{k}$ for $y \le -1$ and 
$z_k(y)=0$ for $y>0$ and  we define 
\begin{equation}\label{charac:Zk}
\cZ_k = \cY + \SPAN(z_0,\ldots,z_k).
\end{equation}

\section{Blow-up profile}

This section is devoted to the construction of a blow-up profile, \emph{i.e.} an approximate solution close to $Q$ of the rescaled equation~\eqref{rescaled} with the expected blow-up behavior with a sufficiently high order of precision.

\subsection{The linearized operator}
We recall standard properties of the operator $\cL$ and introduce useful functions for the construction of the blow-up profile.

\begin{lemma}[{\cite[Lemma 2.1]{MaMeRa1}}]\label{prop:L}
The self-adjoint operator $\cL$ on $L^2(\RR)$ defined by~\eqref{def:L} satisfies the following properties.
\begin{enumerate} 
\item \emph{Spectrum of $\cL$.} The operator $\cL$ has only one negative eigenvalue $-8$ associated to the eigenfunction $Q^3$,
$\ker \cL=\{aQ' : a \in \RR\}$ and $\sigma_\textnormal{ess}( \cL)=[1,+\infty)$.
\item \emph{Scaling.} $\cL\Lambda Q=-2Q$ and $(Q,\Lambda Q)=0$ where $\Lambda$ is defined in~\eqref{def:lambda}.
\item \emph{Coercivity of $\cL$.}
There exists $\nu_0>0$ such that, for all $\phi \in H^1(\RR)$,
\begin{equation} \label{coercivity.2}
(\cL\phi,\phi) \ge \nu_0\|\phi\|_{H^1}^2-\frac1{\nu_0}\left((\phi,Q)^2+(\phi,y\Lambda Q)^2 +(\phi,\Lambda Q)^2 \right) .
\end{equation}
\item \emph{Invertibility.} For any function $h \in \cY$ orthogonal to $Q'$ for $(\cdot,\cdot)$,
there exists a unique function $f \in \cY$ orthogonal to $Q'$ such that $\cL f=h$; moreover, if $h$ is even (resp. odd) then $f$ is even (resp. odd). 
\end{enumerate}
\end{lemma}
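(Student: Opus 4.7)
The plan is to establish the four items separately, each using standard tools from spectral theory and ODE analysis applied to the explicit ground state $Q$.

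For item (i), I would first observe that $\cL = -\partial_y^2 + 1 - 5Q^4$ is a relatively compact perturbation of $-\partial_y^2 + 1$ (since $Q$ decays exponentially), so Weyl's theorem gives $\sigma_{\textnormal{ess}}(\cL) = [1,+\infty)$. Then I would check the two distinguished eigenfunctions by direct computation: differentiating \eqref{eq:Q} yields $\cL Q' = 0$, and using the first integral $(Q')^2 = Q^2 - \tfrac13 Q^6$ (obtained by multiplying \eqref{eq:Q} by $Q'$ and integrating) together with $Q'' = Q - Q^5$, a straightforward expansion of $(Q^3)''$ gives $\cL Q^3 = -8 Q^3$. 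Since $Q^3 > 0$ and $Q'$ has exactly one zero, Sturm--Liouville oscillation theory forces $-8$ to be the simple ground-state eigenvalue and $0$ to be the next simple eigenvalue, with no further eigenvalues in $(-\infty, 1)$.

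Item (ii) reduces to a direct calculation. Writing $\Lambda Q = \tfrac12 Q + y Q'$, expanding $(\Lambda Q)''$, and substituting $Q'' = Q - Q^5$ and its derivative $Q''' = Q' - 5 Q^4 Q'$, every term involving $y$ or $Q^5$ cancels and one is left with $\cL \Lambda Q = -2Q$. The orthogonality $(Q,\Lambda Q) = 0$ follows by integrating by parts in $\int y Q Q' = \tfrac12 \int y (Q^2)' = -\tfrac12 \int Q^2$.

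Item (iii) is the substantive step. By the spectral decomposition of $\cL$, there exists $\nu_1 > 0$ such that $(\cL \phi,\phi) \geq \nu_1 \|\phi\|_{H^1}^2$ whenever $\phi$ is $L^2$-orthogonal to both $Q^3$ and $Q'$; adding back components along those directions produces controlled error terms. To convert this into \eqref{coercivity.2}, I would exploit parity: $Q$, $\Lambda Q$, $Q^3$ are even while $Q'$, $y\Lambda Q$ are odd, so one may split $\phi$ into even and odd parts and treat them separately. For the even part, the conditions $(\phi, Q) = (\phi, \Lambda Q) = 0$ give two scalar constraints, and $(Q, Q^3) = \int Q^4 > 0$ lets one control the $Q^3$-projection. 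For the odd part, $(\phi, y\Lambda Q) = 0$ must control the $Q'$-projection, which reduces to checking that the scalar $(Q', y\Lambda Q)$ is nonzero, a direct integration-by-parts computation. A short contradiction-compactness argument, or a quantitative variational argument, then yields the claimed estimate with losses along the three explicit directions.

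Finally, item (iv) follows from the Fredholm alternative: since $\cL$ is self-adjoint with $\ker \cL = \SPAN(Q')$, any $h \in L^2$ with $(h, Q') = 0$ admits a unique solution $f \in L^2$ to $\cL f = h$ with $(f, Q') = 0$. Regularity of $f$ is inherited from $h$ by ODE bootstrapping on $-f'' + f = h + 5 Q^4 f$, and the exponential decay of $f$ follows from the variation-of-constants formula since the right-hand side decays exponentially; parity preservation holds because $\cL$ commutes with $y \mapsto -y$. The main obstacle in the whole lemma is item (iii), specifically verifying that the three scalar conditions in \eqref{coercivity.2} non-degenerately control the projections onto the non-positive eigenspaces of $\cL$; the remaining items amount to careful bookkeeping on the explicit function $Q$.
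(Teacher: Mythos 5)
The paper offers no proof of this lemma: it is quoted verbatim from \cite{MaMeRa1} (Lemma~2.1 there, which in turn rests on \cite{MaMejmpa} and Weinstein's work), so your proposal can only be measured against the standard literature argument. Your treatment of (i), (ii) and (iv) follows that argument and is correct: Weyl's theorem, the explicit identities $\cL Q'=0$, $\cL Q^3=-8Q^3$, $\cL\Lambda Q=-2Q$, Sturm oscillation, and the Fredholm alternative combined with ODE regularity, exponential decay via the Green's function of $-\partial_y^2+1$, and parity. The odd half of (iii) is also right: an odd $\phi$ is automatically orthogonal to the even function $Q^3$, and $(Q',y\Lambda Q)=\|\Lambda Q\|_{L^2}^2\neq 0$ removes the kernel direction.

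The gap is in the even sector of (iii). You claim that $(\phi,Q)=0$ together with $(Q,Q^3)=\int Q^4>0$ ``lets one control the $Q^3$-projection.'' That cannot be the mechanism, because the quadratic form $(\cL\cdot,\cdot)$ is \emph{not} coercive on even functions orthogonal to $Q$ alone: the function $\Lambda Q$ is even, satisfies $(\Lambda Q,Q)=0$, and $(\cL\Lambda Q,\Lambda Q)=-2(Q,\Lambda Q)=0$, so the infimum of $(\cL\phi,\phi)/\|\phi\|_{H^1}^2$ over even $\phi\perp Q$ is at most~$0$. This is exactly the degenerate case $(\cL^{-1}Q,Q)=-\tfrac12(\Lambda Q,Q)=0$ of the standard positivity criterion for operators with one negative eigenvalue, and it is the reason the lemma imposes the \emph{second} even orthogonality condition $(\phi,\Lambda Q)=0$: it removes precisely the null direction $\Lambda Q=-2\cL^{-1}Q$. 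Your sketch lists both conditions but attributes the control to the wrong scalar and never uses $(\phi,\Lambda Q)=0$; the ``short contradiction-compactness argument'' you invoke is where the entire difficulty lives, and closing it requires the Lagrange-multiplier/variational analysis built on the identities $\cL\Lambda Q=-2Q$ and $(\Lambda Q,Q)=0$ (as in the appendix of \cite{MaMejmpa}). A naive projection bound of the type you describe also fails quantitatively: the constraint $(\phi,Q)=0$ only bounds the $Q^3$-component by a constant times $\|\phi\|_{L^2}$ with a constant that, by Cauchy--Schwarz, is too large to absorb the $-8$ eigenvalue. So the architecture of your proof is the standard one, but the decisive idea that makes the critical ($L^2$-critical) case work is missing.
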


\begin{lemma} \label{le:2.1}
\begin{enumerate}
\item There exists a unique even function $R \in \cY$ such that 
$\cL R=5Q^4$.
\item There exists a unique function $P \in \cZ_0$ such that 
\begin{equation} \label{def:P}
(\cL P)'=\Lambda Q, \quad \lim_{y \to - \infty}P(y)= 2m_0, \quad \lim_{y \to +\infty} P(y)=0,\quad (P,Q')=0.
\end{equation}
Moreover,
\begin{equation} \label{P:Q} 
(P,Q)=m_0^2 .
\end{equation}
\item Let $A_1=P-m_0(1+R)$.
Then, $A_1$ is a bounded, odd function of class $\cC^\infty$ satisfying
\begin{equation} \label{def:A1}
A_1' \in \cY,\quad (\cL A_1)'=\Lambda Q \quad \mbox{and}\quad \lim_{y \to \mp \infty}A_1(y)=\pm m_0.
\end{equation}
Moreover,
\begin{equation} \label{A1:Q} 
(A_1,Q)=0 ,\quad (A_1,Q')=0.
\end{equation}
\end{enumerate}
\end{lemma}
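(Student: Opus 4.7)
The plan is to handle the three parts in order, leaning on the invertibility of $\cL$ on $\cY$ from Lemma~\ref{prop:L}(iv) and carefully isolating the non-Schwartz asymptotic tail that $P$ must carry. Part (i) is immediate: $h = 5Q^4 \in \cY$ is even and satisfies the solvability condition $(5Q^4, Q') = \int (Q^5)'\,dy = 0$, so the lemma produces a unique even $R \in \cY$ with $\cL R = 5Q^4$.

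For part (ii), the strategy is to integrate $(\cL P)' = \Lambda Q$ once with the integration constant chosen consistently with the desired asymptotics. I would introduce
\[
h(y) := 2m_0 + \int_{-\infty}^y \Lambda Q(z)\,dz,
\]
so that $h(-\infty) = 2m_0$, $h(+\infty) = 0$ (using $\int \Lambda Q = -\tfrac12 \int Q = -2m_0$), and $h - 2m_0\, z_0 \in \cY$. Then I would look for $P = 2m_0\, z_0 + \tilde P$ with $\tilde P \in \cY$; the equation reduces to
\[
\cL \tilde P = (h - 2m_0\, z_0) - 2m_0(\cL z_0 - z_0),
\]
whose right-hand side lies in $\cY$ since $\cL z_0 - z_0 = -z_0'' - 5Q^4 z_0$. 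The solvability condition boils down to $(h, Q') = 0$, which comes from one integration by parts (legitimate since $h$ is bounded and $Q$ is Schwartz) and the identity $(\Lambda Q, Q) = 0$, together with $(\cL z_0, Q') = 0$, established by integration by parts and the identity $\cL Q' = 0$. Lemma~\ref{prop:L}(iv) supplies $\tilde P$, and adding a multiple of $Q' \in \cY$ enforces $(P, Q') = 0$ without spoiling $(\cL P)'$. Uniqueness follows because two solutions differ by an element of $\ker \cL \cap L^\infty = \RR Q'$, killed by the orthogonality. Finally, for $(P, Q) = m_0^2$, I would use $\cL \Lambda Q = -2Q$ and self-adjointness (justified by the boundedness of $P, P', P''$ and the Schwartz decay of $\Lambda Q$):
\[
(P,Q) = -\tfrac12(\cL P, \Lambda Q) = -\tfrac12 \int h\, h'\,dy = -\tfrac14 \bigl[h^2\bigr]_{-\infty}^{+\infty} = m_0^2.
\]

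For part (iii), $\cL(1+R) = 1$ (from (i)) gives $\cL A_1 = h - m_0$, so $(\cL A_1)' = \Lambda Q$; the asymptotic limits of $A_1$ follow from those of $P$ and the Schwartz decay of $R$, and $A_1' \in \cY$ since $P' = 2m_0\, z_0' + \tilde P' + aQ''$ with $z_0'$ compactly supported. The delicate point is oddness: I would set $\widetilde A_1(y) := -A_1(-y)$; evenness of $\Lambda Q$ gives $h(-y) = 2m_0 - h(y)$, a direct computation yields $\cL \widetilde A_1 = \cL A_1$, so $\widetilde A_1 - A_1 \in \ker \cL \cap L^\infty = \RR Q'$, and a change of variable shows $(\widetilde A_1, Q') = (A_1, Q') = 0$, forcing $\widetilde A_1 = A_1$. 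The orthogonalities then follow at once: $(A_1, Q) = 0$ by parity, and $(A_1, Q') = (P, Q') - m_0(1, Q') - m_0(R, Q') = 0$ since each term vanishes (the second because $\int Q' = 0$, the third by the parities of $R$ and $Q'$).

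The main obstacle I anticipate is the construction of $P$ in the nonstandard space $\cZ_0$: since Lemma~\ref{prop:L}(iv) only inverts $\cL$ on $\cY$, the non-decaying $2m_0\, z_0$ tail must be extracted cleanly so that the remainder is Schwartz and exactly one solvability condition $(h, Q') = 0$ survives. The oddness of $A_1$ is the other non-routine point, and must be obtained from symmetry of the equation rather than parity of the constituents $P$ and $1+R$, neither of which is itself odd.
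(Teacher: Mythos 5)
Your proof is correct, but it is more self-contained than the paper's and diverges on two points worth noting. For part (ii) the paper simply invokes Proposition~2.2 of \cite{MaMeRa1}, where $P$ and the identity $(P,Q)=m_0^2$ were first established; you instead construct $P$ from scratch by writing $\cL P=h$ with $h(y)=2m_0+\int_{-\infty}^y\Lambda Q$, peeling off the tail $2m_0 z_0$, and checking the single solvability condition $(h,Q')=0$ via $(\Lambda Q,Q)=0$ — this is sound (note that $\cL P=h$ holds with no additive constant, since $\cL P\to h(\mp\infty)$ at $\mp\infty$, which is what makes your computation $(P,Q)=-\tfrac12\int hh'=m_0^2$ legitimate). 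For the oddness of $A_1$ in part (iii), the paper decomposes $A_1=A_e+A_o$ into even and odd parts, observes that the antisymmetry of the limits forces $A_e\in\cY$, deduces $(\cL A_e)'=0$ from the evenness of $\Lambda Q$, and concludes $A_e=0$ from the spectral description of $\cL$; you instead compare $A_1$ with its reflection $-A_1(-\cdot)$ using the symmetry $h(-y)=2m_0-h(y)$ and the orthogonality $(A_1,Q')=0$ to pin down the $Q'$ ambiguity. Both arguments work and have essentially the same content (a two-dimensional kernel with only $Q'$ admissible); yours requires establishing $(A_1,Q')=0$ \emph{before} oddness, which you correctly do directly from $(P,Q')=0$, $\int Q'=0$ and the parity of $R$, whereas the paper gets $(A_1,Q)=0$ as a consequence of oddness just as you do.
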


\begin{proof} 
The existence of the function $R$ follows directly from (iv) of Lemma~\ref{prop:L},
and the function $P$ was first introduced in Proposition 2.2 of~\cite{MaMeRa1}.
The values of the limits $\lim_{\pm \infty} A_1$ are deduced from $R\in \cY$ and~\eqref{def:P}.
Moreover, it is clear from its definition that $A_1$ is bounded, of class $\cC^\infty$ and satisfies $A_1'\in \cY$.
By~$\cL R=5Q^4$, we have $(\cL (1+R))'=0$, which yields $(\cL A_1)'=\Lambda Q$ thanks to~\eqref{def:P}.
Now, decompose $A_1=A_e+A_o$, where $A_e$ is even and $A_o$ is odd. Since $\lim_{+\infty} A_1= - \lim_{-\infty} A_1$,
we have $A_e\in \cY$. Moreover, $(\cL A_e)'=0$ since $\Lambda Q$ is even. Thus, by (i) of Lemma~\ref{prop:L}, $A_e=0$. Last,~\eqref{A1:Q} follows by parity and~\eqref{def:P}.
\end{proof}

\subsection{Definition and properties of the blow-up profile}\label{S:2.2}
We fix for the rest of the paper an even smooth function $\Theta_0:\RR\to [0,1]$ with compact support such that
\begin{equation*}
\Theta_0(x)= \begin{cases}
1 & \mbox{if $|x|<\frac 54$,}\\
0 & \mbox{if $|x|>\frac 74$.}
\end{cases}
\end{equation*}
and we set $C_0=\|\Theta_0\|_{L^2}^2>0$.
Let $\delta\in (0,1)$ be small enough and define
\[
\Theta(x) = \Theta_0\left( \frac x \delta \right)
\]
so that
\begin{equation} \label{def:Theta}
\Theta(x)= \begin{cases}
 1 & \mbox{if $|x|<\frac 54\delta$,}\\
 0 & \mbox{if $|x|>\frac 74\delta$}
\end{cases}
\end{equation}
and
\begin{equation} \label{L2:Theta0}
\|\Theta\|_{L^2}^2 = C_0 \delta .
\end{equation}
Let $s_0<0$, $|s_0|\gg1$ to be fixed later and let $\cI\subset(-\infty,s_0]$ be a compact interval. 
We consider $\cC^1$ real-valued functions $\lambda:\cI\to (0,\infty)$ and $\sigma:\cI\to \RR$
such that
\begin{equation} \label{wBT}
0 < \lambda < \delta^4\quad \mbox{and} \quad - \frac {\delta^2} 4< \sigma < 0.
\end{equation}
Define
\begin{equation} \label{def:theta}
\theta(s,y) =\theta(y;\lambda(s),\sigma(s))= \lambda^{\frac 12}(s) \Theta(\lambda(s) y + \sigma(s)).
\end{equation}
For functions 
\begin{equation} \label{def:AAA}
A_2 \in \cY, \quad A_3, \, A_4, \, A_5^{\star} \in \cZ_1, \quad A_5 \in \cZ_2,
\end{equation}
and real constants $c_1$, $c_2$, $c_3$, $c_4^{\star}$ to be chosen later, we set
\begin{equation} \label{def:V_j}
\begin{aligned}
&V_1=c_1 A_1, \ V_2=c_2 P+A_2, \ V_3=c_3 P + A_3,\ V_4=A_4,\ V_5=A_5, \\
&V_4^{\star}=c_4^{\star}P, \ V_5^{\star}=A_5^{\star},
\end{aligned}
\end{equation}
and
\begin{equation} \label{def:V}
V(s,y)=\sum_{j=1}^5V_j(y) \theta^j(s,y)+(\log\lambda(s))\sum_{k=4}^5 V_k^{\star}(y) \theta^k(s,y).
\end{equation}
Recall that the functions $P$ and $A_1$ are defined in Lemma~\ref{le:2.1}.
Let the blow-up profile be
\begin{equation} \label{def:W}
 W(s,y)= W(y;\lambda(s),\sigma(s)) = Q(y)+V(s,y).
\end{equation}
Last, set
\begin{align} 
\beta(s,y)&= \sum_{j=1}^3 c_j \theta^j(s,y)+c_4^{\star}(\log\lambda(s)) \theta^4(s,y)\label{def:beta}\\
\widetilde{\beta}(s)&=\sum_{j=1}^3 c_j \lambda^{\frac{j}2}(s)+c_4^{\star} \left(\log\lambda(s)\right) \lambda^2(s) . \label{def:beta_tilde}
\end{align}
The form of the approximate solution $W$
is justified in the proof of the next proposition, where we fix the
functions $A_j$ and $A_k^\star$ and the constants $c_2$, $c_3$, $c_4^\star$.
\begin{lemma}\label{le:VVV}
If $A_2,A_3,A_4,A_5,A_5^{\star}$ satisfy~\eqref{def:AAA} then
\begin{equation}\label{VVV}\begin{aligned}
&V_1\in L^\infty, \quad V_1'\in \cY, \quad \Lambda_1 V_1\in \cY,\\
&V_2,V_4^\star \in \cZ_0, \quad V_3, \, V_4, \, V_5^{\star} \in \cZ_1, \quad V_5 \in \cZ_2,\\
&\Lambda_2 V_2, \, \Lambda_3 V_3, \, \Lambda_4 V_4^\star\in \cZ_0,\quad
\Lambda_4 V_4, \, \Lambda_5 V_5, \, \Lambda_5 V_5^\star \in \cZ_1.
\end{aligned}\end{equation}
For $j=2,3,4,5$ and $k=4,5$,
\eqref{VVV} is summarized by
\begin{equation}\label{VVVbis}
V_j\in \cZ_{\lfloor \frac{j-1}2\rfloor},\quad
\Lambda_j V_j\in \cZ_{\lfloor \frac{j-2}2\rfloor},\quad
V_k^\star,\Lambda_k V_k^\star\in \cZ_{\lfloor \frac{k-3}2\rfloor}.
\end{equation}
\end{lemma}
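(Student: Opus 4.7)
The plan is to reduce everything to two computational facts about the generating functions $z_k$ of the spaces $\cZ_k$, then read off each claim of the lemma.

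First, I would establish the core technical observation: on $(-\infty,-1]$, $z_k(y) = |y|^k = (-y)^k$, so $z_k'(y) = -k(-y)^{k-1}$ and hence $y z_k'(y) = k(-y)^k = k z_k(y)$. Therefore, on $(-\infty,-1]$,
\[
\Lambda_j z_k = \tfrac{1-j}{2} z_k + y z_k' = \tfrac{2k+1-j}{2}\, z_k,
\]
while $z_k \equiv 0$ on $[0,\infty)$. Since on the transition region $(-1,0)$ both $z_k$ and $y z_k'$ are smooth with compact support, the difference $\Lambda_j z_k - \tfrac{2k+1-j}{2} z_k$ belongs to $\cY$. The crucial case is the \emph{resonance} $j=2k+1$, where the coefficient vanishes and $\Lambda_j z_k \in \cY$ outright. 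I would also record the routine facts that $\cY$ is stable under differentiation and under multiplication by polynomials, and that $P\in\cZ_0$ with $P - 2m_0 z_0 \in \cY$ and $P' \in \cY$ (from Lemma~\ref{le:2.1} and $\lim_{-\infty}P=2m_0$).

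Next, I would verify the $V_j$ membership. For $V_1 = c_1 A_1$: $A_1$ bounded gives $V_1 \in L^\infty$, $A_1' \in \cY$ gives $V_1' \in \cY$, and $\Lambda_1 V_1 = y V_1'$ lies in $\cY$ since $\cY$ absorbs polynomial factors. For $j=2,\dots,5$, decomposing $A_j = \psi_j + \sum a_{j,i} z_i$ with $\psi_j \in \cY$ and using $P \in \cZ_0 \subset \cZ_{\lfloor (j-1)/2\rfloor}$, the inclusion $V_j \in \cZ_{\lfloor (j-1)/2\rfloor}$ is immediate; similarly $V_4^\star = c_4^\star P \in \cZ_0$ and $V_5^\star = A_5^\star \in \cZ_1$.

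Finally, the heart of the proof is computing $\Lambda_j V_j$ and $\Lambda_k V_k^\star$ term by term. Using the decompositions above and the identity for $\Lambda_j z_k$, each $\Lambda_j z_k$ contribution stays in $\cZ_k$ unless $j=2k+1$, in which case it drops to $\cY$. Concretely, for $\Lambda_3 V_3$ the leading $z_1$ term in $A_3$ is killed (since $j=3=2\cdot 1+1$), leaving $\Lambda_3 V_3 \in \cZ_0$; for $\Lambda_5 V_5$ the leading $z_2$ term in $A_5$ is killed (since $j=5=2\cdot 2+1$), leaving $\Lambda_5 V_5 \in \cZ_1$. In the remaining cases ($\Lambda_2 V_2$, $\Lambda_4 V_4$, $\Lambda_4 V_4^\star$, $\Lambda_5 V_5^\star$) no resonance occurs, so the operator preserves the $\cZ_{\lfloor (j-1)/2\rfloor}$ class, matching the targets $\cZ_{\lfloor (j-2)/2\rfloor}$ and $\cZ_{\lfloor (k-3)/2\rfloor}$ in the statement; for the $P$-contributions one uses $P' \in \cY$, so $y P'$ is automatically in $\cY$ and only the $-\tfrac{j-1}{2}P$ term contributes, yielding a $\cZ_0$ piece.

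The only mildly delicate point is bookkeeping the resonance condition $j=2k+1$: the lemma's targets $\lfloor (j-2)/2 \rfloor$ precisely record that for odd $j$ one class is gained, while for even $j$ the class is preserved. The calculation itself is routine once the $\Lambda_j z_k$ identity on $(-\infty,-1]$ is in hand.
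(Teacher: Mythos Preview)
Your proposal is correct and follows essentially the same approach as the paper. The paper's proof is very terse, recording only the key cancellation ``if $\phi \in \cZ_l$ for $l \geq 1$, then $\Lambda_{2l+1}\phi \in \cZ_{l-1}$'' and leaving the remaining verifications to the reader; your computation $\Lambda_j z_k = \tfrac{2k+1-j}{2}\,z_k$ on $(-\infty,-1]$ is exactly the mechanism behind this cancellation, and the rest of your bookkeeping (including the treatment of the $P$-contributions via $P'\in\cY$) matches what the paper implicitly uses.
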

\begin{proof}
For $V_j$ and $V_k^\star$, \eqref{VVV} is a consequence of the definitions in \eqref{def:AAA}-\eqref{def:V_j} and the properties
of $P$ and $A_1$ in Lemma~\ref{le:2.1}.
Next, we observe the following cancellation: if $\phi \in \cZ_l$ for $l \geq 1$, then $\Lambda_{2l+1} \phi \in \cZ_{l-1}$.
In particular, $\Lambda_3 V_3\in \cZ_0$ and $\Lambda_5 V_5\in \cZ_1$.
\end{proof}
Let
\begin{equation} \label{eq:W.1}
\cE (W ) 
= \partial_s W + \partial_y ( \partial_y^2 W - W + W^5) 
- \frac{\lambda_s}{\lambda}\Lambda W - \left( \frac{\sigma_s}{\lambda} - 1 \right) \partial_y W.
\end{equation}

\begin{proposition} \label{pr:error}
Assume~\eqref{wBT}. Let $c_1=-2$. 
There exist real constants $c_2$, $c_3$, $c_4^{\star}$ and functions 
$A_2,A_3,A_4,A_5,A_5^{\star}$ satisfying~\eqref{def:AAA},
such that the function $W$ satisfies the following properties,
for all $s\in \cI$.
\begin{enumerate}
\item \emph{Pointwise estimates.}
For any $p\geq 0$, for any $y\in \RR$,
\begin{equation}
|\partial_y^p W| + |\partial_y^p \Lambda W|\lesssim \omega + \delta^{-p} \lambda^{p+\frac 12} \ONE_{[-2\delta,2\delta]}(\lambda y) . \label{est:W0}
\end{equation}
\item \emph{Error of $W$ for the rescaled equation.}
\begin{equation} \label{eq:W.2}
\cE(W)=-\left(\frac{\lambda_s}{\lambda}+\beta\right) \left( \Lambda Q+\Psi_{\lambda} \right)
- \left( \frac{\sigma_s}{\lambda} - 1 \right) (Q'+\Psi_{\sigma})+ \Psi_{W}
\end{equation}
where for any $p\geq 0$, for any $y\in \RR$,
\begin{align}
\left|\partial_y^p \Psi_{\lambda} \right| &\lesssimD \lambda^{\frac12}\omega+ \lambda^{1+p} \ONE_{[-2\delta,0]}(\lambda y) , \label{est:Psi_lambda}\\
\left|\partial_y^p \Psi_{\sigma} \right| &\lesssimD \lambda^{\frac12}\omega+ \lambda^{\frac32+p}\ONE_{[-2\delta,0]}(\lambda y) , \label{est:Psi_sigma}\\
\left|\partial_y^p \Psi_{W} \right| &\lesssimD \left|\log\lambda\right| \lambda^{3} \left( |y| \ONE_{[-2\delta,0]}(\lambda y) 
+ \omega \right)
+\lambda^{\frac72+p}\ONE_{[0,2\delta]}(\lambda y) .
\label{est:Psi_W}
\end{align}
Moreover, 
\begin{equation} \label{est:Psi_lambda:Q}
 \left|\left(\Psi_{\lambda},Q\right) \right| \lesssimD \lambda .
\end{equation}
\item \emph{Mass of $W$.}
\begin{equation} \label{est:asymp:mass:W}
\left| \int W^2 - \int Q^2 \right| \lesssim \delta.
\end{equation}
\item \emph{Variation of the energy of $W$.}
\begin{equation} \label{est:deriv:energy:W}
\left|\frac{d}{ds}\left[\frac {E(W)}{\lambda^2}\right]\right|
\lesssimD \frac 1{\lambda^2} \left( \lambda^{\frac12}\left| \frac{\lambda_s}{\lambda}+\widetilde{\beta} \right|
+ \lambda^{\frac12}\left| \frac{\sigma_s}{\lambda}-1\right|+ \left| \log\lambda \right| \lambda^3 \right).
\end{equation}
\end{enumerate}
\end{proposition}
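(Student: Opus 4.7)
My plan is to construct the profile $W$ by matching, order by order, the coefficients of $\theta^j$ ($j=1,\ldots,5$) and of $\log\lambda \cdot \theta^k$ ($k=4,5$) in the expansion of $\cE(W)$. Inserting $W = Q+V$, using $-Q''+Q-Q^5 = 0$ and $(Q+V)^5-Q^5 = 5Q^4V+\cdots+V^5$, one organizes $\cE(W)$ as a polynomial in $\theta$ modulo a small remainder. Since $\theta=\lambda^{1/2}$ on the bulk of the support of $\Theta$, the $s$- and $y$-derivatives of $\theta^j$ either produce bulk contributions proportional to $\lambda_s/\lambda$ (to be iterated via the formal law $\lambda_s/\lambda \approx -\beta$, $\sigma_s/\lambda \approx 1$) or terms supported on $\{\lambda y + \sigma \in [-2\delta,-\delta]\cup[\delta,2\delta]\}$ coming from $\Theta'(\lambda y + \sigma)$; the latter furnish the indicator contributions in \eqref{est:Psi_lambda}--\eqref{est:Psi_W}. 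Matching the coefficient of $\theta^j$ in $\cE(W)$ produces for each $j$ an ODE of the schematic form
\[
\partial_y(\cL V_j) = G_j(y; V_1,\ldots,V_{j-1}, c_1,\ldots,c_{j-1}) + c_j \Lambda Q,
\]
with $c_j$ the coefficient of $\theta^j$ in $\beta$ and $G_j$ collecting the lower-order nonlinear and transport contributions.

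The resolution is iterative. For $j=1$, $G_1 \equiv 0$ and the prescribed $c_1=-2$ gives $V_1 = -2A_1$ via Lemma~\ref{le:2.1}(iii). For $j=2,3$ I take $V_j = c_j P + A_j$: the $c_j\Lambda Q$ on the right is absorbed by $(\cL P)'=\Lambda Q$, and the remaining equation for $A_j$ is solved by Lemma~\ref{prop:L}(iv) after fixing $c_j$ so that the $\cY$-part of $G_j$ is orthogonal to $Q'$. At order $\theta^4$ the solvability condition can be met within $V_4 = A_4$ (no $c_4 P$ term is needed), but at order $\log\lambda \cdot \theta^4$ a secular obstruction arises, induced by the polynomial tail of $V_1$; this is absorbed by the correction $V_4^\star = c_4^\star P$, the $c_4^\star\Lambda Q$ in the corresponding profile equation being again absorbed via $(\cL P)' = \Lambda Q$. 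The matching at orders $\theta^5$ and $\log\lambda\cdot\theta^5$ fixes $A_5 \in \cZ_2$ and $A_5^\star \in \cZ_1$ in the same manner. The $\cZ_k$-grading recorded in Lemma~\ref{le:VVV} tracks the polynomial tails at $-\infty$ inherited from the unlocalized $V_1$.

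With the profile in hand, \eqref{est:W0} follows from the size of $\theta$ and the regularity \eqref{VVV}, noting that each $y$-derivative of $\theta$ costs a factor $\lambda$ and is supported on $\{|\lambda y + \sigma|\le 2\delta\}$. The decomposition \eqref{eq:W.2} isolates the $\Lambda Q$ and $Q'$ lines matched by the modulation terms, while the residuals $\Psi_\lambda, \Psi_\sigma, \Psi_W$ are read off from the matching: bulk pieces decay like $\omega$ through the $Q$-like factors, and boundary pieces carry $\ONE_{[-2\delta,2\delta]}$ with the stated $\lambda$-powers. For \eqref{est:Psi_lambda:Q}, pairing with $Q$ annihilates $\Lambda Q$-type parts through $(\Lambda Q, Q) = 0$ and odd-in-$y$ components through \eqref{A1:Q}, leaving only boundary contributions of size $\lambda$. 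The mass \eqref{est:asymp:mass:W} reduces to $2(Q,V) + \|V\|_{L^2}^2$; the leading contribution $\int(c_1 A_1)^2 \theta^2\, dy$, after the change of variables $x = \lambda y + \sigma$ and using $A_1 \to \mp m_0$ at $\pm\infty$, tends to $4m_0^2 C_0 \delta$. Finally, \eqref{est:deriv:energy:W} is obtained by directly differentiating $E(W)/\lambda^2$, substituting \eqref{eq:W.1}--\eqref{eq:W.2}, and integrating by parts to eliminate pure-transport terms, so that only the modulation defects $\lambda_s/\lambda + \widetilde\beta$, $\sigma_s/\lambda - 1$ and the $|\log\lambda|\lambda^3$ residual remain.

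The main obstacle is the $\cZ_k$-bookkeeping: the non-decay of $V_1$ at $-\infty$ propagates polynomial tails into every subsequent forcing, and at each order the right-hand side of the profile equation must be split into its $\cY$-part (to which Lemma~\ref{prop:L}(iv) applies) plus its $\cZ_k$-tail. The precise structure of these tails is what forces the logarithmic corrections at orders $\theta^4$ and $\theta^5$ and thereby dictates the full shape of the ansatz \eqref{def:V}.
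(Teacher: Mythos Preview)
Your overall strategy matches the paper's: expand $\cE(W)$ as a polynomial in $\theta$, set $(\cL A_j)'=F_j$ at each order, and use $(\cL P)'=\Lambda Q$ to absorb the $c_j\Lambda Q$ contributions from $\beta$. However, your account of the $\log\lambda$ mechanism inverts the actual logic and misses the computation that drives the whole construction.

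There is no obstruction at order $(\log\lambda)\theta^4$ before $V_4^\star$ is introduced; the paper notes explicitly that no equation arises at that order (equivalently $A_4^\star=0$). The genuine obstruction is at order $\theta^5$. The solvability condition $(F_j,Q)=0$ is enforced at level $j$ by tuning $c_{j-1}$, not $c_j$ as you write: your own $G_j$ depends only on $c_1,\ldots,c_{j-1}$, so $c_j$ is simply unavailable at that step. Concretely, the coefficient of $c_{j-1}$ in $(F_j,Q)$ is $c_1\Omega_j$ with $\Omega_j=\tfrac{5-j}{2}\,m_0^2$, an identity obtained from $(\Lambda P,Q)-20(Q^3A_1P,Q')=2m_0^2$. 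Since $\Omega_3,\Omega_4\neq 0$, one fixes $c_2$ at level $3$ and $c_3$ at level $4$; at level $2$, $F_2$ is odd and no constant is needed. But $\Omega_5=0$, so no coefficient of the form $c_4$ can close level $5$. The remedy is to insert $(\log\lambda)V_4^\star\theta^4$ with $V_4^\star=c_4^\star P$: differentiating $\log\lambda$ in $\partial_s W$ produces $\tfrac{\lambda_s}{\lambda}V_4^\star\theta^4$, which under $\tfrac{\lambda_s}{\lambda}\approx -c_1\theta$ feeds a term $-c_1c_4^\star P\,\theta^5$ into $F_5$ with pairing $(P,Q)=m_0^2\neq 0$, making $c_4^\star$ the effective tuning parameter at $\theta^5$. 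The \emph{same} identity $\Omega_5=0$ then guarantees that the induced equation for $A_5^\star$ at order $(\log\lambda)\theta^5$ is automatically solvable. Finally, $V_1=c_1A_1$ is bounded, not polynomially growing; the $\cZ_k$ tails arise only from integrating non-decaying forcings $F_j\in\cZ_0$ or $\cZ_1$ at levels $j\ge 3$.
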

We choose $c_1=-2$ to simplify the expression of the scaling asymptotic (see Section~\ref{S:2.3}).
Subsections~\ref{s.const}--\ref{s.variation} are devoted to the proof of Proposition~\ref{pr:error}, while
Subsection~\ref{S:2.3} gives the heuristic blow-up law.

\subsection{Construction of the blow-up profile}\label{s.const}
First, for any $j \in \NN$, we compute
\begin{equation}\label{eq:dsthj}
\partial_s (\theta^j) = j\frac{\lambda_s}{\lambda} \theta^{j-1}\Lambda \theta + j\frac{\sigma_s}{\lambda} \theta^{j-1}\partial_y \theta ,
\end{equation}
and thus
\begin{equation*}
\begin{aligned}
\partial_s W &= \frac{\lambda_s}{\lambda} \left(\sum_{j=1}^5 jV_j \theta^{j-1} \Lambda \theta 
+(\log\lambda) \sum_{k=4}^5kV_k^{\star} \theta^{k-1} \Lambda \theta+ \sum_{k=4}^5V_k^{\star}\theta^k\right)\\
& \quad + \frac{\sigma_s}{\lambda} \left(\sum_{j=1}^5 jV_j \theta^{j-1}\partial_y \theta
+(\log\lambda) \sum_{k=4}^5 kV_k^{\star} \theta^{k-1}\partial_y\theta\right) .
\end{aligned}
\end{equation*}
Second, using $W=Q+V$, $Q''-Q+Q^5=0$ and the definition of $\cL$,
\[
\partial_y \left( \partial_y^2 W - W + W^5\right)
=\partial_y\left( - \cL V + \left(Q+V\right)^5 - Q^5 - 5 Q^4 V\right).
\]
Thus, expanding $V$, we obtain
\begin{align*}
\partial_y \left( \partial_y^2 W - W + W^5\right)
&=-\sum_{j=1}^5(\cL V_j)' \theta^j-(\log \lambda)\sum_{k=4}^5(\cL V_k^{\star})'\theta^k \\ 
& \quad -\sum_{j=1}^5jV_j \theta^{j-1}\partial_y\theta-(\log \lambda)\sum_{k=4}^5 kV_k^{\star}\theta^{k-1}\partial_y\theta \\
& \quad +\partial_y \left((Q+V)^5-Q^5-5Q^4V\right)+\Psi_1
\end{align*}
where $\Psi_1$ is an error term defined by
\begin{equation} \label{def:Psi:theta1} 
\begin{aligned}
\Psi_1&:=
5 Q^4 \sum_{j=1}^5 V_j\partial_y(\theta^j) + 5(\log \lambda) Q^4 \sum_{k=4}^5 V_k^{\star}\partial_y(\theta^k)
\\& \quad + \sum_{j=1}^5 \left(3V_j'' \partial_y( \theta^j) + 3 V_j' \partial_y^2 (\theta^j)
+ V_j\partial_y^3( \theta^j)\right) \\
& \quad +(\log \lambda)\sum_{k=4}^5\left( 3(V_k^{\star})'' \partial_y( \theta^k)
+3 (V_k^{\star})' \partial_y^2( \theta^k) +V_k^{\star} \partial_y^3( \theta^k)\right).
\end{aligned}
\end{equation}
Third, we compute
\begin{equation*}
\partial_yW=Q'+\sum_{j=1}^5\left(jV_j\theta^{j-1}\partial_y\theta+ V_j'\theta^j \right)+
(\log\lambda)\sum_{k=4}^5 \left( kV_k^{\star}\theta^{k-1}\partial_y\theta+ (V_k^{\star})'\theta^k \right).
\end{equation*}
Last, we remark that
$\Lambda(\theta^j V_j) = jV_j \theta^{j-1}\Lambda \theta + (\Lambda_j V_j) \theta^j$,
where $\Lambda_j$ is defined in \eqref{def:Lambda:k}. Thus,
\begin{equation*}
\Lambda W=\Lambda Q+ \sum_{j=1}^5 \left(jV_j \theta^{j-1} \Lambda \theta +(\Lambda_j V_j) \theta^j\right)
+(\log\lambda)\sum_{k=4}^5 \left(k V_k^{\star} \theta^{k-1}\Lambda \theta + (\Lambda_k V_k^{\star})\theta^k \right).
\end{equation*}
Combining these identities, using the expressions of $V_j$ and $V_k^\star$ in \eqref{def:V_j}, and the
equations of $P$ and $A_1$ in~\eqref{def:P} and~\eqref{def:A1}, we deduce that 
\begin{align*}
\cE(W) &= - \left(\frac{\lambda_s}{\lambda}+\beta\right)\left(\Lambda Q +\Psi_\lambda\right)
- \left( \frac{\sigma_s}{\lambda} - 1 \right) \left( Q'+\Psi_{\sigma} \right)\\
& \quad - \sum_{j=2}^5(\cL A_j)'\theta^j - (\log\lambda) (\cL A_5^{\star})'\theta^5\\
& \quad +\partial_y \left((Q+V)^5-Q^5-5Q^4V\right)+\beta \Psi_\lambda +\Psi_1 
\end{align*}
where
\begin{align}
\Psi_{\lambda} &=\sum_{j=1}^5(\Lambda_jV_j)\theta^j-\sum_{k=4}^5V_k^{\star} \theta^k
+(\log\lambda)\sum_{k=4}^5 (\Lambda_kV_k^{\star}) \theta^k ,\label{def:Psi:lambda}\\
\Psi_{\sigma}&=\sum_{j=1}^5 V_j'\theta^j+(\log\lambda) \sum_{k=4}^5(V_k^{\star})'\theta^k .\label{def:Psi:sigma}
\end{align}
Denote $V_0:=Q$ for the simplicity of notation.
We expand
\begin{equation*}
(Q+V)^5-Q^5-5Q^4V = \sum_{j=2}^5 M_j \theta^j+ (\log\lambda) M_5^\star \theta^5 +\sum_{l=0}^5 (\log\lambda)^l \left( \sum_{j=6}^{25} M_{l,j} \theta^j \right)
\end{equation*}
where for $j=2,\ldots,5$,
\begin{align*}
M_j & = \left(\sum_{j_1+\cdots+j_5=j } V_{j_1} \cdots V_{j_5} \right)-5V_0^4 V_j, \\
M_5^{\star} & = 20 V_0^3 V_1V_4^{\star},
\end{align*}
and for $l=0,\ldots,5$, $j=6,\ldots,25$,
\[
M_{l,j}=\sum_{j_1+\cdots+ j_{5-l}+k_{1}+\cdots+k_l=j} (V_{j_1} \cdots V_{j_{5-l}}) (V_{k_1}^{\star} \cdots V_{k_l}^{\star}).
\]
In the above sums, we have adopted the following convention:
(1) sums are taken over all indices $j_m\in \{0,\ldots,5\}$, $k_m\in \{4,5\}$ under the constraint indicated below the sum;
(2) in the sum defining $M_{l,j}$, if $l=0$, there are no $V_{k}^\star$ terms and if $l=5$ there are no $V_{j}$ terms.
Moreover, we have $M_{l,6}=M_{l,7}=0$ for $l\geq 2$.

Differentiating the above expansion with respect to $y$, we find
\begin{align*}
\partial_y \left( (Q+V)^5-Q^5-5Q^4V \right)&= 
\sum_{j=2}^5 M_j' \theta^j + (\log\lambda) (M_5^*)' \theta^5
+\Psi_2+\Psi_3
\end{align*}
where
\begin{align*}
\Psi_2&=\sum_{j=2}^5M_j \partial_y(\theta^j) + (\log\lambda) M_5^* \partial_y( \theta^5) \\
\Psi_3&=\partial_y \left[\sum_{l=0}^5 (\log\lambda)^l \left( \sum_{j=6}^{25} M_{l,j}\theta^j\right)\right].
\end{align*}
Therefore, we obtain after rearrangement 
\begin{equation} \label{eq:W.1bis}
\begin{aligned}
\cE(W) &= - \left(\frac{\lambda_s}{\lambda}+\beta\right)\left(\Lambda Q+\Psi_{\lambda} \right) - \left( \frac{\sigma_s}{\lambda} - 1 \right) \left( Q'+\Psi_{\sigma} \right) + \Psi_W \\
& \quad +\sum_{j=2}^5 \left(F_j - (\cL A_j)'\right)\theta^j
+(\log\lambda) \left(F_5^{\star}-(\cL A_5^{\star})'\right) \theta^5
\end{aligned}
\end{equation}
where the functions $F_j$ and $F_5^\star$ are defined by
\begin{align}
F_j&=\sum_{k=1}^{j-1} c_{j-k} \Lambda_k V_k +M_j', \quad \mbox{for $j=2,3,4$,} \label{def:Fj} \\
F_5&=\sum_{k=2}^{4} c_{5-k} \Lambda_k V_k-c_1V_4^{\star}+M_5', \label{def:F5}\\
F_5^{\star}&=c_4^{\star} \Lambda_1 V_1+c_1 \Lambda_4 V_4^{\star}+(M_5^*)'\label{def:F5:star}
\end{align}
and where we have merged all the error terms in $\Psi_W = \sum_{k=1}^4 \Psi_k$, including the term
\begin{align*}
\Psi_4 & = \beta \Psi_\lambda -\sum_{j=2}^4 \left(\sum_{k=1}^{j-1} c_{j-k} \Lambda_k V_k \right)\theta^j
+\left(-\sum_{k=2}^{4} c_{5-k} \Lambda_k V_k +c_1 V_4^\star\right) \theta^5\\
&\quad - (\log\lambda)\left(c_4^{\star} \Lambda_1 V_1+c_1 \Lambda_4 V_4^{\star}\right)\theta^5.
\end{align*}

Our next objective is to cancel the second line of the identity~\eqref{eq:W.1bis} by a suitable choice of functions $A_2,\ldots,A_5,A_5^\star$ and constants $c_2,c_3,c_4^\star$.
Recall that we have fixed $c_1=-2$.
Note also that there is no error term to cancel at the order $(\log \lambda) \theta^4$, which justifies the simplified form
of the function $V_4^\star$ in~\eqref{def:V_j} (in other words, $A_4^\star=0$).

\smallskip

\noindent \textit{Construction of $A_2$.} We claim that there exists a unique even function $A_2 \in \cY$ such that $(\cL A_2)'=F_2$.
Indeed, observe from the definition of $V_1$ in~\eqref{def:V_j} that 
\begin{equation*}
 F_2=c_1 y V_1'+10 (Q^3V_1^2)'=c_1^2 \left( y A_1'+10 (Q^3A_1^2)'\right) .
\end{equation*}
Observe that $F_2$ belongs to $\cY$, is odd, and thus $y\mapsto\int_{-\infty}^{y}F_2$ is an even function that also belongs to $\cY$.
In particular $(\int_{-\infty}^{y}F_2,Q')=0$. Hence, by using (iv) of Lemma~\ref{prop:L}, there exists a unique function $A_2 \in \cY$, even, such that 
$\cL A_2=\int_{-\infty}^y F_2(y) dy$,
which implies that $(\cL A_2)'=F_2$. 

\smallskip

\noindent \textit{General computation.}
For $j \in \NN$, define
\begin{equation} \label{def:Omega}
\Omega_j= \left( \Lambda_{j-1}P+yA_1'+20(Q^3A_1P)',Q \right) .
\end{equation}
Then,
\begin{equation} \label{id:Omega}
\Omega_j= \left(\frac{5-j}2\right) m_0^2 .
\end{equation}

\noindent\emph{Proof of \eqref{id:Omega}.}
Using the definition of $\Lambda_{j-1}$ in~\eqref{def:Lambda:k},
$(P,Q)=m_0^2$ from~\eqref{P:Q}, then the cancellation $(y A_1',Q)=0$ by parity and integration by parts, we rewrite $\Omega_j$ as 
\begin{align}
\Omega_j &= \left(\frac{1-j}2P+ \Lambda P +y A_1'+20(Q^3A_1P)',Q \right) \nonumber \\
& = \frac{1-j}2 m_0^2 + \left( \Lambda P, Q \right)-20\left(Q^3A_1P,Q' \right) . \label{on:Omek}
\end{align}
Note that from \eqref{def:L} and~\eqref{def:A1}, the following identity holds
\begin{equation*}
\cL(A_1')=(\cL A_1)'+20Q^3Q'A_1=\Lambda Q+20 Q^3Q'A_1.
\end{equation*}
On the one hand, taking the scalar product of this identity with $P$, and integrating by parts, we have
\begin{equation*}
\left(\cL(A_1'),P\right)= - \left(\Lambda P ,Q \right)+20\left(Q^3A_1P,Q'\right) .
\end{equation*}
On the other hand, by integration by parts, using~\eqref{def:P} and~\eqref{def:A1},
then $(\cL P)'=\Lambda Q$ and $(A_1,\Lambda Q)=0$ by parity, we obtain
\begin{equation*}
\left(\cL(A_1'),P\right) =\left(A_1',\cL P\right) 
= -\left(\lim_{-\infty} A_1\right)\left(\lim_{-\infty} P\right) -\left(A_1,(\cL P)' \right) = -2m_0^2.
\end{equation*}
Thus,
\begin{equation} \label{id:Lambda:P:Q}
\left(\Lambda P,Q \right) - 20 \left(Q^3A_1P ,Q' \right) = 2m_0^2,
\end{equation}
which implies~\eqref{id:Omega} once inserted in~\eqref{on:Omek}.

\smallskip

\noindent \textit{Choice of $c_2$ and construction of $A_3$.} We claim that there exist $c_2 \in \RR$ and $A_3 \in \cZ_1$ such that $(\cL A_3)'=F_3$. 
Indeed, observe from~\eqref{def:V_j} and~\eqref{def:Fj} that
$F_3 =c_1c_2 F_{3,1} + c_1 F_{3,2}$
where
\begin{align*}
F_{3,1} &= \Lambda_2 P +yA_1'+20(Q^3A_1P)',\\
F_{3,2} &= \Lambda_2 A_2+20 (Q^3A_1A_2)'+10 c_1^2 (Q^2 A_1^3)'.
\end{align*}
In particular, $F_3\in \cZ_0$.
As in the construction of the profiles $P_k$ in \cite[Lemma 2.4]{CoMa1}, we look for a solution of $(\cL A_3)'=F_3$ of the form $A_3= \widetilde{A}_3-\int_{y}^{+\infty}F_3$ for some $\widetilde{A}_3 \in \cY$. Since $F_3 \in \cZ_0$, we have
$\int_{y}^{+\infty}F_3\in \cZ_1$ and this would imply that $A_3 \in \cZ_1$.
To find $\widetilde{A}_3$, by a direct computation, we observe that it should satisfy the following equation
\begin{equation} \label{on:S3}
(\cL\widetilde{A}_3)'
=F_3+\left( \cL \int_y^{\infty} F_3\right)' = S_3' ,
\end{equation}
where
$S_3 = F_3'-5Q^4\int_y^{+\infty}F_3$.
Since $S_3 \in \cY$, from (iv) of Lemma~\ref{prop:L}, the existence $\widetilde{A}_3\in \cY$
solving $\cL\widetilde{A}_3=S_3$ is guaranteed if $(S_3,Q')=0$.
A direct computation using~\eqref{on:S3} and $\cL Q'=0$ yields 
\begin{equation*}
\left(S_3,Q'\right)=-(S_3',Q)
= -\left(F_3,Q\right) - \left(\left(\cL\int_y^{+\infty}F_3\right)' ,Q \right)= -\left( F_3,Q \right) .
\end{equation*}
Moreover, recalling the definition of $\Omega_k$ in~\eqref{def:Omega}
$ ( F_3,Q ) = c_1c_2\Omega_3+c_1 (F_{3,2},Q )$.
Since $\Omega_3 \neq 0$ by~\eqref{id:Omega}, we have
\begin{equation*}
 \left( F_3,Q \right)=0 \iff c_2 = -\frac{(F_{3,2},Q)}{\Omega_3} 
\end{equation*}
which concludes the proof of the claim. 

\smallskip

\noindent \textit{Choice of $c_3$ and construction of $A_4$.}
We claim that there exist $c_3 \in \RR$ and $A_4 \in \cZ_1$ such that $(\cL A_4)'=F_4$. 
Indeed, observe from~\eqref{def:V_j} and~\eqref{def:Fj} that
$F_4 =c_1c_3 F_{4,1}+F_{4,2}$,
where 
\begin{align*} 
F_{4,1}&=\Lambda_3 P +yA_1'+20(Q^3A_1P)',\\
F_{4,2}&= c_1\Lambda_3 A_3+c_2(\Lambda_2 V_2 +20(Q^3V_1A_3)'+10(Q^3V_2^2)'+30(Q^2V_1^2V_2)'+5(QV_1^4)'.
\end{align*}
It is important to observe that $F_4 \in \cZ_0$. Indeed,
since $A_3 \in \cZ_1$, we have $\Lambda_3 A_3\in \cZ_0$ from~\eqref{VVV}.
The other terms in the definitions of $F_{4,1}$ and $F_{4,2}$ all clearly belong to $\cZ_0$.
Moreover, recalling the definition of $\Omega_k$ in~\eqref{def:Omega}, we have
$\left( F_4,Q \right) = c_1c_3\Omega_4+ \left(F_{4,2},Q \right)$.
Since $\Omega_4 \neq 0$ by~\eqref{id:Omega}, we have 
\begin{equation*}
 \left( F_4,Q \right)=0 \iff c_3 = -\frac{(F_{4,2},Q)}{c_1\Omega_4} . 
\end{equation*}
Hence, we conclude the proof of the claim arguing as for the construction of $A_3$. 

\smallskip

\noindent \textit{Construction of $A_5^{\star}$.} We claim that for any $c_4^{\star} \in \RR$,
there exists $A_5^{\star} \in \cZ_1$ such that $(\cL A_5^{\star})'=F_5^{\star}$.
Indeed, from~\eqref{def:V_j} and~\eqref{def:F5} that 
\begin{equation*}
 F_{5}^{\star}=c_1c_4^{\star}\left(\Lambda_4 P +yA_1'+20(Q^3A_1P)'\right).
\end{equation*}
In particular, using~\eqref{def:Omega} and~\eqref{id:Omega}, we have $(F_5^{\star},Q)=c_1c_4^{\star}\Omega_5=0$.
Since $F_5^{\star} \in \cZ_0$, the claim is proved as before.

\smallskip

\noindent \textit{Choice of $c_4^\star$ and construction of $A_5$.}
Since $\Omega_5=0$, the introduction of terms with $\log \lambda$ in the definition of $W$ is needed to solve the equation of $A_5$.
We claim that there exist $c_4^{\star} \in \RR$ and $A_5 \in \cZ_2$ such that $(\cL A_5)'=F_5$.
From~\eqref{def:V_j} and~\eqref{def:F5}, we compute
$F_5 = c_1c_4^{\star} P+F_{5,2}$ where 
\begin{align*}
F_{5,2}&= c_3 \Lambda_2 V_2 +c_2\Lambda_3 V_3+c_1 \Lambda_4 A_4\\ & \quad
+(20 Q^3V_1A_4 +20 Q^3V_2V_3 + 30Q^2V_1^2V_3 +30Q^2V_1V_2^2+20 QV_1^3V_2+V_1^5)'.
\end{align*}
We check that $F_5 \in \cZ_1$ from~\eqref{VVV}. Moreover, using $(P,Q)=m_0^2$, we have
\begin{equation*}
 (F_5,Q)=c_1c_4^{\star}(P,Q)+(F_{5,2},Q)=c_1c_4^{\star}m_0^2+(F_{5,2},Q) 
\end{equation*}
and so
\begin{equation*}
 \left( F_5,Q \right)=0 \iff c_4^{\star} = -\frac{(F_{5,2},Q)}{c_1m_0^2} . 
\end{equation*}

\subsection{Estimates for the components of the blow-up profile}

\begin{lemma}\label{le:2.4}
Assume~\eqref{wBT}. The following estimates hold for all $s\in \cI$ and all $y\in \RR$.
\begin{enumerate}
\item For $j\geq 1$, $q\geq 1$,
\begin{align}
&0\leq \theta^j \leq \lambda^{\frac j2}\ONE_{[-2\delta,2\delta]}(\lambda y),\label{on:theta}\\
&0\leq \lambda^{\frac j2}- \theta^j \leq \lambda^{\frac j2} \ONE_{[\delta,\infty)}(\lambda|y|), \label{def:theta3}\\
&|\partial_y^q \theta|\lesssim \delta^{-q}\lambda^{q+\frac 12} \ONE_{[\delta,2\delta]}(\lambda |y|).\label{on:dtheta}
\end{align}
\item For $j=1,\ldots,5$, $k=4,5$, $p\geq 0$, $q\geq 0$,
\begin{align} 
|(\partial_y^p V_j)\partial_y^q (\theta^j)|
&\lesssim \begin{cases} 
\lambda^{\frac j2} \omega \quad \mbox{if $q=0$ and $p>\frac{j-1}2$}\\
\delta^{-p-q} \lambda^{p+q+\frac 12} \ONE_{[-2\delta,2\delta]}(\lambda y)
\quad \mbox{otherwise};
\end{cases}\label{eq:prod}\\
|(\partial_y^p \Lambda_j V_j)\partial_y^q (\theta^j)|
&\lesssim \begin{cases} 
\lambda^{\frac j2} \omega \quad \mbox{if $q=0$ and $p>\frac{j-2}2$}\\
\delta^{-p-q} \lambda^{p+q+1} \ONE_{[-2\delta,2\delta]}(\lambda y)
\quad \mbox{otherwise};
\end{cases}\label{eq:prodL}\\
|(\partial_y^p V_k^\star)\partial_y^q (\theta^k)|+|(\partial_y^p\Lambda_k V_k^\star)\partial_y^q (\theta^k)|
&\lesssim \begin{cases} 
\lambda^{\frac k2} \omega \quad \mbox{if $q=0$ and $p>\frac{k-3}2$}\\
\delta^{-p-q}\lambda^{p+q+\frac 32} \ONE_{[-2\delta,2\delta]}(\lambda y)
\quad \mbox{otherwise.}
\end{cases}\label{eq:prod1}
\end{align}
\item For $p\geq 0$,
\begin{equation}\label{est:V0}
|\partial_y^p V(s,y)|+|\partial_y^p\Lambda V(s,y)| \lesssim \lambda^{\frac12} \omega+\delta^{-p}\lambda^{p+\frac12}\ONE_{[-2\delta,2\delta]}(\lambda y).
\end{equation}
\end{enumerate}
\end{lemma}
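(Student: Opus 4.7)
\smallskip

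\textbf{Proof plan.} Part (i) is a direct computation from $\theta(s,y) = \lambda^{1/2}(s)\,\Theta_0\bigl((\lambda(s) y + \sigma(s))/\delta\bigr)$ combined with the support and normalization of $\Theta_0$ and the smallness $|\sigma| < \delta^2/4$ from \eqref{wBT}. The first inequality follows from $0 \leq \Theta_0 \leq 1$ and the observation $\{|\lambda y + \sigma| \leq 7\delta/4\} \subset \{\lambda|y| \leq 2\delta\}$; the second from $\lambda^{j/2} - \theta^j$ vanishing on $\{|\lambda y + \sigma| \leq 5\delta/4\} \supset \{\lambda|y| \leq \delta\}$; and the third from $\partial_y^q \theta = \lambda^{q+1/2}\delta^{-q}\,\Theta_0^{(q)}\bigl((\lambda y+\sigma)/\delta\bigr)$ together with the support of $\Theta_0^{(q)}$ in $\{5/4 \leq |\cdot| \leq 7/4\}$.

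\smallskip

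For part (ii), the structural input is the decomposition $V_j = \phi_j + P_j$ with $\phi_j \in \cY$ and $P_j \in \SPAN(z_0,\ldots,z_{\lfloor(j-1)/2\rfloor})$ from \eqref{VVVbis} (and analogous decompositions for $\Lambda_j V_j$, $V_k^\star$, $\Lambda_k V_k^\star$ with polynomial degrees $\lfloor(j-2)/2\rfloor$, $\lfloor(k-3)/2\rfloor$, $\lfloor(k-3)/2\rfloor$). Three regimes arise. (a) When $q=0$ and $p$ strictly exceeds the polynomial degree, $\partial_y^p P_j$ is supported in the smoothing interval $[-1,0]$ and bounded there, so $|\partial_y^p V_j|\lesssim \omega$ globally and the bound $\lambda^{j/2}\omega$ follows from $\theta^j\leq \lambda^{j/2}$. (b) When $q=0$ and $p$ does not exceed the degree, polynomial growth gives $|\partial_y^p V_j|\lesssim (\delta/\lambda)^{\deg - p}$ on $\{\lambda|y|\leq 2\delta\}$, and multiplication by $\theta^j\leq \lambda^{j/2}$ reduces the claim to the elementary inequality $\delta^{\deg}\lambda^{j/2-\deg-1/2}\lesssim 1$, which holds since both exponents are nonnegative and $\delta,\lambda<1$. (c) When $q \geq 1$, $\partial_y^q \theta^j$ is localised in $\lambda|y|\in[\delta,2\delta]$; on this set $|y|\geq \delta/\lambda\geq \delta^{-3}$ by \eqref{wBT}, so $\omega(y)\leq e^{-\delta/(2\lambda)}$ is smaller than any power of $\lambda$. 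Only the polynomial part of $V_j$ contributes non-negligibly, and combining $|\partial_y^p V_j|\lesssim (\delta/\lambda)^{(\deg - p)_+}$ with $|\partial_y^q \theta^j|\lesssim \delta^{-q}\lambda^{q+j/2}\ONE_{[\delta,2\delta]}(\lambda|y|)$ yields the claimed bound. The estimates for $\Lambda_j V_j$ and $V_k^\star$, $\Lambda_k V_k^\star$ follow identically, with the appropriate polynomial degree and leading $\lambda$-power; the $|\log\lambda|$ factors are harmless since they are dominated by any negative power of $\lambda$.

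\smallskip

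Part (iii) reduces to part (ii) through the definition $V = \sum_{j=1}^5 V_j\theta^j + (\log\lambda)\sum_{k=4}^5 V_k^\star\theta^k$: each summand is estimated by one of the two alternatives in part (ii), and the ``if'' contributions saturate at $\lambda^{1/2}\omega$ (coming from $j=1$) while the ``otherwise'' contributions saturate at $\delta^{-p}\lambda^{p+1/2}\ONE_{[-2\delta,2\delta]}(\lambda y)$. For $\Lambda V$ I would use the identity $\Lambda(V_j\theta^j) = (\Lambda_j V_j)\theta^j + j V_j \theta^{j-1}\Lambda\theta$ with $\Lambda\theta = \tfrac12\theta + y\partial_y\theta$ and the bound $|y\partial_y\theta|\lesssim \lambda^{1/2}\ONE_{[\delta,2\delta]}(\lambda|y|)$ from part (i), so that every factor is again controlled by part (ii). The only substantive point--and the place where the hierarchy $\lambda<\delta^4$ from \eqref{wBT} is essential--is the Schwartz-versus-polynomial trade-off in regime (c) of part (ii); the rest is index bookkeeping.
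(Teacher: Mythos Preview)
Your overall strategy is the same as the paper's: both proofs rest on the support/normalization of $\Theta$ and its derivatives, the splitting $V_j=\phi_j+P_j$ with $\phi_j\in\cY$ and $P_j\in\SPAN(z_0,\ldots,z_{\lfloor(j-1)/2\rfloor})$, and the fact that $\omega(y)\le e^{-\delta/(2\lambda)}$ on the support of $\partial_y^q(\theta^j)$ when $q\ge1$. Parts~(i) and~(iii) are correct, and your treatment of $\Lambda V$ via $\Lambda(V_j\theta^j)=(\Lambda_jV_j)\theta^j+jV_j\theta^{j-1}\Lambda\theta$ is slightly more explicit than the paper's.

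There is, however, a real gap in regime~(c) for the sub-case $p>\deg=\lfloor(j-1)/2\rfloor$. Your bound $|\partial_y^p V_j|\lesssim(\delta/\lambda)^{(\deg-p)_+}=1$ there, combined with $|\partial_y^q(\theta^j)|\lesssim\delta^{-q}\lambda^{q+j/2}$, yields only $\delta^{-q}\lambda^{q+j/2}$, which is \emph{not} controlled by the target $\delta^{-p-q}\lambda^{p+q+1/2}$: the ratio $\delta^{p}\lambda^{(j-1)/2-p}$ diverges as $\lambda\to0$ since $p>(j-1)/2$. The sentence ``only the polynomial part contributes non-negligibly'' is in fact backwards in this sub-case: for $p>\deg$ one has $z_k^{(p)}\equiv0$ on $(-\infty,-1)\cup(0,\infty)$, hence $\partial_y^p P_j$ vanishes on $\{\lambda|y|\ge\delta\}\subset\{|y|>1\}$, and the \emph{only} contribution is the $\cY$-part. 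The fix is precisely what the paper does: for $p>(j-1)/2$ use $|\partial_y^p V_j|\lesssim\omega$ (not $\lesssim1$) on that support, then convert $e^{-\delta/(2\lambda)}\lesssim(\lambda/\delta)^{p-(j-1)/2}$ to recover the missing factor. You already stated the key inequality $\omega\le e^{-\delta/(2\lambda)}$; you just need to apply it to the whole of $\partial_y^p V_j$ rather than to its $\cY$-part alone.
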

Note that (i) of Proposition~\ref{pr:error} is a direct consequence of \eqref{est:V0}.
We also prove (iii) of Proposition~\ref{pr:error}.
Using $W=Q+V$, we compute
$\int W^2 = \int Q^2 + 2 \int QV + \int V^2$.
From~\eqref{est:V0} and~\eqref{wBT}, we have 
$\left|\int Q V\right| \lesssim \lambda^\frac 12\lesssim \delta^2$ and
$\int V^2\lesssim \delta$, which proves~\eqref{est:asymp:mass:W}.

\begin{proof}[Proof of Lemma~\ref{le:2.4}]
Using~\eqref{def:Theta} and~$|\sigma|\leq \frac \delta4$, we have
\begin{align} 
& \mbox{if $|y|>\frac{2\delta}{\lambda}$ then $\Theta(\lambda y+\sigma) = 0$,}\label{supp:theta} \\
&\mbox{if $|y|<\frac{\delta}{\lambda}$ then $\Theta(\lambda y +\sigma)=1$ and 
$\Theta^{(p)}(\lambda y+\sigma) = 0$ for any $p\geq 1$.} \label{supp:dtheta}
\end{align}
The estimates (i) follow then from direct computations.

Let $j=1,\ldots,5$, $0\leq p\leq \frac{j-1}2$ and $q\geq 0$. From \eqref{VVV}, \eqref{on:theta}, \eqref{on:dtheta},
\begin{align*}
|(\partial_y^p V_j)\partial_y^q (\theta^j)|
&\lesssim \left( 1+|y|^{\frac{j-1}2-p} \right)\delta^{-q}\lambda^{q+\frac j2} \ONE_{[-2\delta,2\delta]}(\lambda y)\\
&\lesssim \delta^{-p-q+\frac{j-1}2}\lambda^{p+q+\frac 12} \ONE_{[-2\delta,2\delta]}(\lambda y).
\end{align*}
Let $j=1,\ldots,5$, $p>\frac{j-1}2$ and $q\geq 1$. From \eqref{VVV} and \eqref{on:dtheta}
\begin{equation*}
|(\partial_y^p V_j)\partial_y^q (\theta^j)|
\lesssim \omega(y) \delta^{-q}\lambda^{q+\frac j2} \ONE_{[\delta,2\delta]}(\lambda |y|)
\lesssim e^{-\frac{\delta}{2\lambda}}\delta^{-q}\lambda^{q+\frac j2} \ONE_{[\delta,2\delta]}(\lambda |y|).
\end{equation*}
Since $\frac{\delta}{\lambda}>1$ and $p-\frac{j-1}2>0$, we estimate $e^{-\frac{\delta}{2\lambda}}\lesssim \delta^{-p+\frac{j-1}2}\lambda^{p-\frac{j-1}2}$ and thus in this case, we also obtain
\begin{equation*}
|(\partial_y^p V_j)\partial_y^q (\theta^j)|
\lesssim \delta^{-p-q+\frac{j-1}2}\lambda^{p+q+\frac 12} \ONE_{[-2\delta,2\delta]}(\lambda y).
\end{equation*}
Let $j=1,\ldots,5$ and $p>\frac{j-1}2$. It follows from \eqref{VVV} and \eqref{on:theta} that
$|(\partial_y^p V_j) \theta^j| \lesssim \lambda^{\frac j2} \omega$.
Thus, \eqref{eq:prod} is proved. Estimates \eqref{eq:prodL} and \eqref{eq:prod1}
are proved similarly.

Last, for any $p\geq 0$, the estimate \eqref{est:V0} is a consequence of the general Leibniz rule
\[
\partial_y^p V=\sum_{j=1}^5\sum_{q=0}^p \binom{p}{q} (\partial_y^q V_j) \partial_y^{p-q} (\theta^j)
+ (\log\lambda )\sum_{k=4}^5\sum_{q=0}^p \binom{p}{q} (\partial_y^q V_k^{\star}) \partial_y^{p-q} (\theta^k)
\]
and of the previous estimates.
\end{proof}

\subsection{Estimates for the error terms}
Now, we estimate the error terms $\Psi_{\lambda}$, $\Psi_{\sigma}$ and $\Psi_W$.

\smallskip

\noindent \textit{Estimate for $\Psi_{\lambda}$.} 
Estimate~\eqref{est:Psi_lambda} for any $k\geq 0$ and for $y<0$ is a consequence of the definition of $\Psi_{\lambda}$ in~\eqref{def:Psi:lambda} and~\eqref{eq:prodL}, \eqref{eq:prod1}. For $y\geq 0$, we use the decay properties from~\eqref{VVV}.
Moreover, the same estimates show that
\[
\left| (\Psi_\lambda - (\Lambda_1 V_1) \theta, Q) \right| \lesssim \lambda.
\]
We observe that the function $\Lambda_1 V_1=c_1yA_1'$, is odd. Thus, $(\Lambda_1 V_1,Q)=0$ and it follows from \eqref{def:theta3} that 
\[ 
\left|\left( (\Lambda_1 V_1) \theta, Q \right)\right|=
\left|\left( \Lambda_1 V_1 (\theta-\lambda^{\frac12}), Q \right)\right| \lesssim \lambda^{\frac12}e^{-\frac{\delta}{2\lambda}}
\]
which yields \eqref{est:Psi_lambda:Q}.

\smallskip

\noindent \textit{Estimate for $\Psi_{\sigma}$.} It follows from the definition of $\Psi_{\sigma}$ in~\eqref{def:Psi:sigma} and~\eqref{VVV}, \eqref{eq:prod}, \eqref{eq:prod1} that~\eqref{est:Psi_sigma} holds for any $k\geq 0$.

\smallskip

\noindent \textit{Estimate for $\Psi_{W}$.} First, we claim that, for any $k \ge 0$,
\begin{equation} \label{eq:psitheta}
\left| \partial_y^k\Psi_1 \right|+\left| \partial_y^k\Psi_2 \right|
\lesssimD \lambda^{\frac72+k} \ONE_{[-2\delta,2\delta]}(\lambda y).
\end{equation}
We decompose 
$\Psi_1=\Psi_{1,1}+\Psi_{1,2}+\Psi_{1,3}+\Psi_{1,4}$ according to the four terms in~\eqref{def:Psi:theta1}.
Using the expression of $Q$, \eqref{VVV} and \eqref{on:dtheta}, we have for any $k\geq 0$,
\[
|\partial_y^k\Psi_{1,1}|+|\partial_y^k\Psi_{1,2}|
\lesssimD \omega(y)\lambda^{\frac 32} \ONE_{[\delta,2\delta]}(\lambda |y|)
\lesssimD e^{-\frac\delta{2\lambda}} \lambda^{\frac 32} \ONE_{[-2\delta,2\delta]}(\lambda y).
\]
For all the terms in~$\partial_y^k\Psi_{1,3}$, we use~\eqref{eq:prod} with $p+q=3+k$ and $q \ge 1$, which proves that 
$|\partial_y^k\Psi_{1,3}|\lesssimD \lambda^{\frac72+k} \ONE_{[-2\delta,2\delta]}(\lambda y).$
Using~\eqref{eq:prod1}, we have $|\partial_y^k\Psi_{1,4}|\lesssimD\left|\log \lambda\right|
\lambda^{\frac92+k} \ONE_{[-2\delta,2\delta]}(\lambda y).$
These estimates justify~\eqref{eq:psitheta} for the term $\Psi_1$.

We decompose $\Psi_2 = \Psi_{2,1}+\Psi_{2,2}$, where 
\[
\Psi_{2,1} = \sum_{j=2}^4 M_j \partial_y(\theta^j) + \widetilde M_5 \partial_y(\theta^5)+ (\log \lambda) M_5^\star \partial_y(\theta^5),
\quad \Psi_{2,2} = V_1^5 \partial_y(\theta^5)
\]
and $\widetilde M_5 = M_5 - V_1^5$.
We observe that for $j=2,\ldots,4$, $k\geq 0$,
$|\partial_y^kM_j|+|\partial_y^k\widetilde M_5|+|\partial_y^kM_5^\star|\lesssim \omega$.
In particular, using \eqref{on:dtheta}, we have
$|\partial_x^k\Psi_{2,1}|\lesssimD e^{-\frac\delta{2\lambda}} \lambda^{2} \ONE_{[-2\delta,2\delta]}(\lambda y)$.
Moreover, using~\eqref{on:dtheta}
and $|\partial_y^kV_1|\lesssim 1$, we have $|\partial_y^k\Psi_{2,2}|\lesssimD \lambda^{\frac72+k} \ONE_{[-2\delta,2\delta]}(\lambda y)$.
This proves~\eqref{eq:psitheta} for $\Psi_2$.

Second, we prove that, for all $k \ge 0$,
\begin{equation} \label{eq:psiV}
\left| \partial_y^k\Psi_3 \right| \lesssimD 
\left|\log\lambda\right|\lambda^{3}\omega + \lambda^{\frac72+k} \ONE_{[-2\delta,2\delta]}(\lambda y).
\end{equation}
We claim that, for any $p\geq 0$,
\[
|\partial_y^p M_{l,j}| \lesssim
\begin{cases} \omega & \mbox{if $p + l > \frac{j}2 - \frac 52 $,}\\
 1 + |y|^{\frac{j}2 - \frac 52 - p - l} & \mbox{otherwise.}
\end{cases}
\]
Indeed, after applying the Leibniz rule, any term of the sum expressing $\partial_y^p M_{l,j}$ is the product of five functions
$\partial_y^{p_m} V_{j_m}$ or $\partial_y^{p_m}V_{k_m}^\star$; moreover
if $p + l > \frac{j}2 - \frac 52 $ then there exists at least one $j_m\in \{1,\ldots,5-\ell\}$ such that
$p_m> \frac{j_m}2-\frac 12$ and so $\partial_y^{p_m} V_{j_m}\in \cY$ by \eqref{VVVbis}, or 
 one $k_m\in \{1,\ldots,\ell\}$ such that
$p_m+1> \frac{j_m}2-\frac 12$ and so $\partial_y^{p_m} V_{k_m}^\star\in \cY$ by \eqref{VVVbis}.

Therefore, recalling that $M_{l,6}=M_{l,7}=0$ for $l\geq 2$ in the sum defining $\Psi_3$,
the estimate~\eqref{eq:psiV} is proved as~(ii)-(iii) of Lemma~\ref{le:2.4}.

Last, we claim, for all $k \ge 0$,
\begin{equation} \label{eq:psibeta}
\left|\partial_y^k \Psi_4 \right| \lesssimD \lambda^{3+k} \left|\log \lambda\right| |y| \ONE_{[-2\delta,0]}(\lambda y) + \lambda^{3} \left|\log \lambda\right|\omega
\end{equation}
We compute from~\eqref{def:beta}, \eqref{def:Psi:lambda},
\begin{align*}
\Psi_4&= \sum_{j=6}^{8}\left( \sum_{k=j-3}^{5} c_{j-k}\Lambda_k V_k \right)\theta^j
-\sum_{j=6}^{8} \left(\sum_{k=\max(4,j-3)}^5 c_{j-k} V_k^{\star} \right) \theta^j \\
& \quad +(\log\lambda) \sum_{j=6}^{8}\left(\sum_{k=\max(4,j-3)}^5c_{j-k}\Lambda_k V_k^{\star} \right)\theta^j 
-(\log\lambda)c_4^{\star}\sum_{j=8}^9 V_{j-4}^{\star} \theta^j \\
& \quad 
+(\log\lambda)c_4^{\star}\sum_{j=6}^9 (\Lambda_{j-4} V_{j-4}) \theta^j 
+(\log\lambda)^2c_4^{\star}\sum_{j=8}^9 (\Lambda_{j-4}V_{j-4}^{\star}) \theta^j.
\end{align*}
By \eqref{VVV}, all the functions involved in the above expression of $\Psi_4$ belong to $\cZ_1$.
Moreover, the minimal power of $\theta$ is $\theta^6$ with $\log \lambda$, and $\theta^8$ with higher powers of $\log\lambda$.
Thus, estimate~\eqref{eq:psibeta} follows from \eqref{on:theta} and \eqref{on:dtheta}.

\subsection{Variation of the energy}\label{s.variation}
By direct computation
\begin{equation*}
\lambda^2 \frac{d}{ds}\left[ \frac {E(W)}{\lambda^2}\right]
=-\int \partial_sW \left( \partial_y^2W+W^5 \right) -2\frac{\lambda_s}{\lambda} E(W)
\end{equation*}
and thus, using~\eqref{eq:W.1}-\eqref{eq:W.2} and integrating by parts,
\begin{align*}
\lambda^2 \frac{d}{ds}\left[ \frac {E(W)}{\lambda^2}\right]
&=-\frac{\lambda_s}{\lambda} \int \Lambda W \left( \partial_y^2W+W^5 \right)-2\frac{\lambda_s}{\lambda} E(W) \\ & \quad +\left( \frac{\sigma_s}{\lambda}-1\right)\int \left( Q'+\Psi_{\sigma} \right) \left( \partial_y^2W+W^5\right) \\ & \quad +\int \left( \frac{\lambda_s}{\lambda}+\beta\right) \left( \Lambda Q+\Psi_{\lambda} \right) \left( \partial_y^2W+W^5\right) \\ & \quad 
-\int \Psi_W \left( \partial_y^2 W+W^5 \right) \\ & 
:= e_1+e_2+e_3+e_4.
\end{align*}
First, integrating by parts,
$-\int \Lambda W \left( \partial_y^2W+W^5 \right) = 2E(W)$ and thus $e_1=0$.

\smallskip 
\noindent \textit{Estimate for $e_2$}. It follows from~\eqref{eq:Q}, \eqref{est:V0}, \eqref{est:Psi_sigma} 
and $\int QQ'=0$ that
\begin{equation*}
\left| e_2\right| \le \left| \frac{\sigma_s}{\lambda}-1\right|
\left(\int \left( |Q'|+|\Psi_{\sigma}| \right) \left( |\partial_y^2V|+|(Q+V)^5-Q^5|\right)+\int |\Psi_{\sigma}|Q \right)
\lesssimD \lambda^{\frac12}\left| \frac{\sigma_s}{\lambda}-1\right| . 
\end{equation*}

\smallskip 
\noindent \textit{Estimate for $e_3$}. The function $\beta$ depending on $y$, we rewrite $e_3$ 
using $\widetilde \beta$ (see~\eqref{def:beta_tilde}) as
\begin{align*}
e_3
&=\left( \frac{\lambda_s}{\lambda}+\widetilde{\beta}\right)\int \left( \Lambda Q+\Psi_{\lambda} \right) \left( \partial_y^2W+W^5\right) 
+\int \left( \beta-\widetilde{\beta} \right)\left( \Lambda Q+\Psi_{\lambda} \right) \left( \partial_y^2W+W^5\right) \\ 
&:=e_{3,1}+e_{3,2} .
\end{align*}
On the one hand, we deduce from~\eqref{eq:Q}, \eqref{est:V0}, \eqref{est:Psi_lambda} and $\int (\Lambda Q)Q =0$ that 
\begin{equation*}
\left| e_{3,1} \right| \lesssim \left| \frac{\lambda_s}{\lambda}+\widetilde{\beta} \right|\left( \int \left( |\Lambda Q|+|\Psi_{\lambda}| \right) \left( |\partial_y^2V|+|(Q+V)^5-Q^5|\right)+\int |\Psi_{\lambda}|Q \right)
\lesssimD \lambda^{\frac12}\left| \frac{\lambda_s}{\lambda}+\widetilde{\beta} \right| .
\end{equation*}
On the other hand, by using \eqref{def:beta}-\eqref{def:beta_tilde} 
\begin{equation*}
 e_{3,2}
 = \int \left( \sum_{j=1}^4 c_j\left(\theta^j-\lambda^{\frac{j}2}\right)+c_4^{\star} (\log\lambda) \left(\theta^4-\lambda^2 \right) \right)\left( \Lambda Q+\Psi_{\lambda} \right) \left( \partial_y^2W+W^5\right),
\end{equation*}
and so it follows from \eqref{def:theta3}, \eqref{est:W0} and \eqref{est:Psi_lambda} that
\begin{equation*} 
\left| e_{3,2} \right|
\lesssimD\lambda^{\frac12} \int \left(\omega+\lambda\ONE_{[-2\delta,2\delta]}(\lambda y)\right)
\left(\omega+\lambda^{\frac 52}\ONE_{[-2\delta,2\delta]}(\lambda y)\right)
\ONE_{[\delta,\infty]}(\lambda|y|) \lesssimD \lambda^3 .
\end{equation*}
Hence
\begin{equation*}
\left| e_{3} \right| \lesssimD \lambda^{\frac12}\left| \frac{\lambda_s}{\lambda}+\widetilde{\beta} \right|+ \lambda^3 .
\end{equation*}

\smallskip

\noindent \textit{Estimate for $e_4$}. It follows from~\eqref{est:W0} and~\eqref{est:Psi_W} that 
\begin{align*} 
\left| e_4 \right| &\lesssimD \int \left(\lambda^{3}\left|\log\lambda\right|\left( |y| \ONE_{[-2\delta,0]}(\lambda y)+\omega\right)
+\lambda^{\frac72} \ONE_{[0,2\delta]}(\lambda y) \right) \left( \omega+\lambda^{\frac52}\ONE_{[-2\delta,2\delta]}(\lambda y) \right)\\
&\lesssimD \lambda^3 \left| \log\lambda \right| .
\end{align*}
We complete the proof of~\eqref{est:deriv:energy:W} gathering these estimates.

\smallskip

The proof of Proposition~\ref{pr:error} is complete.
From now on, when we consider a blow-up profile $W$ such as in \eqref{def:V}-\eqref{def:W},
we take $c_1=-2$, the constants $c_2$, $c_3$, $c_4^{\star} \in \RR$ and the functions 
$A_2,A_3,A_4,A_5,A_5^{\star}$ defined in Proposition~\ref{pr:error}.

\subsection{Formal blow-up law}\label{S:2.3}

Proposition~\ref{pr:error} (ii) suggests to look for a blow-up profile
$W$ with parameters $(\lambda,\sigma)$ satisfying formally
\begin{equation*}
\frac{\lambda_s}{\lambda} + \widetilde\beta =0,\quad \frac{\sigma_s}{\lambda} - 1 =0.
\end{equation*}
The replacement of $\beta(s,y)$ by $\widetilde\beta(s)$ in the equation of $\lambda$
is justified by~\eqref{def:theta3}.
In view of the expression of $\widetilde\beta$ in \eqref{def:beta_tilde} (recall $c_1=-2$), we introduce
\begin{equation} \label{def:G}
G(\lambda) = \int^{\lambda_0}_\lambda \frac{d\eta}{\eta^{\frac 32} \big(2 - c_2 \eta^\frac 12 - c_3 \eta - c_4^{\star} \eta^\frac 32 \log\eta\big)}.
\end{equation}
This function is well-defined for any $0<\lambda\leq \lambda_0$, where $\lambda_0>0$ is small enough.
Rewriting the equation for $\lambda$ as $\lambda_s G'(\lambda)=-1$,
yields by integration
$G(\lambda(s))= G(\lambda(0))-s = -s$.
We have $G(\lambda) = \lambda^{-\frac 12} - \frac {c_2}4 \log \lambda + \cO(1) $ as $ \lambda \downarrow 0 $.
Moreover, the function $G$ is decreasing and one-to-one from $(0,\lambda_0]$ to $[0,\infty)$.
Thus, $\lambda(s) = G^{-1}(-s) \sim s^{-2}$.
Next, we integrate $\sigma_s = \lambda$ choosing $\sigma(0)=\int_{-\infty}^0 \lambda$.
Thus, for any $s<-1$, we obtain the estimates
\begin{equation*}
\left| \lambda(s) - \frac 1{s^2}\right|\lesssim \frac {\log|s|} {|s|^3},\quad
\left|\sigma(s)- \frac 1s\right| \lesssim \frac {\log|s|}{s^2}.
\end{equation*}

\section{Modulation close to the blow-up profile}\label{S:3}
\subsection{Refined blow-up profile} 
In~\cite{MaMeRa1}, a refined approximate blow-up profile around $Q$, involving a small parameter $b$ and
denoted by $Q_b$,
allows to obtain an additional decisive orthogonality condition on the residual term.
We proceed similarly for the blow-up profile~$W$.
For a $\cC^1$ real-valued function $b:\cI\to \RR$ such that
\begin{equation}\label{wBTb}
|b| < \delta^2,
\end{equation}
we define the function $W_b(s,y)=W_b(y;\lambda(s),\sigma(s),b(s))$ by
\begin{equation} \label{def:W:b}
W_b(s,y) = W(s,y) + b(s) P_b(s,y)
\end{equation}
where
\begin{equation} \label{def:P:b}
P_b(s,y) = \chi_b(y) P(y),\quad \chi_b(s,y) = \chi(|b(s)|^\gamma y),\quad \gamma = \frac 34, 
\end{equation}
and the function $\chi$ is defined in \eqref{def:chi}.
The next lemma provides estimates for the refined profile $W_b$.
Let
\begin{equation} \label{def:Psi:b}
\Psi_b= \partial_y \left( b\partial_y^2 P_b - b P_b + W_b^5-W^5 \right) + b \Lambda Q + b \beta \Lambda P_b.
\end{equation}

\begin{lemma} Assume \eqref{wBT} and \eqref{wBTb}. For all~$s\in\cI$.
\begin{enumerate}
\item \emph{Pointwise estimates for $W_b$.} For any $p\geq 1$, for any $y \in \RR$,
\begin{align} 
 |W_b|+|\Lambda W_b| &\lesssim \omega+\lambda^{\frac12}\ONE_{[-2\delta,2\delta]}(\lambda y)+|b| \ONE_{[-2,0]}(|b|^{\gamma}y), \label{est:W0:b} \\ 
 |\partial_y^pW_b|+|\partial_y^p\Lambda W_b| &\lesssim \omega+\delta^{-p}\lambda^{p+\frac12} \ONE_{[-2\delta,2\delta]}(\lambda y)+|b|^{1+\gamma p}\ONE_{[-2,-1]}(|b|^{\gamma}y).\label{est:W12:b}
\end{align}
\item \emph{Error term in the equation of $W_b$.}
For $p=0,1$, for all $y \in \RR$,
\begin{equation} \label{est:Psi0:b}
|\partial_y^p \Psi_b| \lesssimD (b^2+|b|\lambda^{\frac12})\omega 
+|b|\lambda^{\frac12} \ONE_{[-2,0]}(|b|^{\gamma}y)+ |b|^{1+\gamma(1+p)}\ONE_{[-2,-1]}(|b|^{\gamma}y).
\end{equation}
Moreover, 
\begin{equation} \label{est:Psib:Q}
 \left|(\Psi_b,Q)-2c_1m_0^2b\lambda^{\frac12} \right| \lesssimD b^2+|b| \lambda .
\end{equation}
\item \emph{Mass and Energy of $W_b$.}
\begin{align}
 \left| \int W_b^2-\int W^2 \right| &\lesssimD |b|+|b|^{1-\gamma} \lambda^{\frac12}, \label{mass:W:b}\\ 
 \left| {E(W_b)}- {E(W)}\right| &\lesssimD |b|+|b|^{1-\gamma}\lambda^{\frac52} . \label{energy:W:b}
\end{align}
\end{enumerate}
\end{lemma}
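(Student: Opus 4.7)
The approach is to expand $W_b = W + bP_b$ with $P_b = \chi_b P$, where the cut-off $\chi_b(y) = \chi(|b|^\gamma y)$ is supported in $\{|b|^\gamma y > -2\}$ and equals $1$ on $\{|b|^\gamma y > -1\}$, and to reduce every claim to pointwise estimates built from the bounds on $W$ in Proposition~\ref{pr:error}(i) and Lemma~\ref{le:2.4}, together with the structural properties $P \in \cZ_0$, $P' \in \cY$, $\lim_{-\infty} P = 2m_0$, $\lim_{+\infty} P = 0$, and $(\cL P)' = \Lambda Q$. For item (i), apply the Leibniz rule to $\partial_y^p(\chi_b P)$: the pure term $\chi_b P^{(p)}$ is controlled by $\omega$ for $p \geq 1$ (since $P^{(p)} \in \cY$) and by $|b|\,\ONE_{[-2,0]}(|b|^\gamma y)$ for $p = 0$ (since $P$ is bounded on the support of $\chi_b$ and decays exponentially for $y > 0$); each term involving $\chi_b^{(k)}$ for $k \geq 1$ produces a factor $|b|^{\gamma k}\,\ONE_{[-2,-1]}(|b|^\gamma y)$, and these combine with the bounds for $W$ from Proposition~\ref{pr:error}(i) to yield~\eqref{est:W0:b}--\eqref{est:W12:b}. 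The expression $\Lambda W_b = \Lambda W + b\chi_b\Lambda P + by\chi_b' P$ is handled similarly, noting that $y\chi_b' P = (|b|^\gamma y)\,\chi'(|b|^\gamma y)\,P$ is bounded on the support of $\chi_b'$.

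For item (ii), the key algebraic cancellation is
\[
\partial_y\bigl(b\partial_y^2 P_b - bP_b\bigr) = -b(\cL P_b)' - 5b(Q^4 P_b)' = -b\chi_b\Lambda Q - 5b\chi_b(Q^4 P)' + b\cR_{\chi},
\]
obtained from $\cL P_b = \chi_b\cL P - 2\chi_b' P' - \chi_b'' P$ and $(\cL P)' = \Lambda Q$, where $\cR_\chi$ is supported in $\{|b|^\gamma y \in [-2,-1]\}$ and collects the commutator contributions $\chi_b^{(k)}P^{(j)}$. Adding $b\Lambda Q$ from~\eqref{def:Psi:b} produces $b(1-\chi_b)\Lambda Q$, which is absorbed into $b^2\omega$ because $\Lambda Q \in \cY$ and $1-\chi_b$ vanishes on $\{y > -|b|^{-\gamma}\}$. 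Expanding $\partial_y(W_b^5 - W^5) = 5b\partial_y(P_b W^4) + O(b^2)$ and combining with $-5b\chi_b(Q^4 P)'$ gives $5b\chi_b\partial_y((W^4-Q^4)P)$ plus $\chi_b'$-commutators; using $W^4 - Q^4 = 4Q^3 V + O(V^2)$ and $|V| \lesssim \lambda^{1/2}$ yields a $|b|\lambda^{1/2}\omega + |b|\lambda^{1/2}\ONE_{[-2,0]}(|b|^\gamma y)$ contribution. The remaining $b\beta\Lambda P_b$ term is of the same order since $|\beta| \lesssim \lambda^{1/2}\,\ONE_{[-2\delta,2\delta]}(\lambda y)$. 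For the scalar product~\eqref{est:Psib:Q}, integrate by parts and use $(\Lambda Q, Q) = 0$ and $(\cL P, Q') = 0$; the only surviving $b\lambda^{1/2}$-order contributions combine as
\[
bc_1\lambda^{1/2}\bigl[(\Lambda P, Q) - 20(Q^3 A_1 P, Q')\bigr] = 2c_1 m_0^2\,b\lambda^{1/2}
\]
by~\eqref{id:Lambda:P:Q}, with remainder $O(b^2 + |b|\lambda)$.

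For item (iii), expand $\int W_b^2 - \int W^2 = 2b(W, P_b) + b^2\int P_b^2$. The quadratic term satisfies $b^2\int P_b^2 \lesssim b^2|b|^{-\gamma} = |b|^{2-\gamma}$, since $P$ is bounded and $\chi_b$ restricts to an interval of length $O(|b|^{-\gamma})$. For $(W, P_b) = (Q, P_b) + (V, P_b)$, the first piece equals $m_0^2$ up to an exponentially small correction, giving the $|b|$ contribution. The delicate piece is $b(V, P_b)$: a naive bound $\int |V|\,|P_b| \lesssim \delta/\lambda^{1/2}$ is too weak, so one exploits the leading structure $V \approx c_1\theta A_1$ from Lemma~\ref{le:2.4}, the oddness of $A_1$, and the rescaling $z = \lambda y$ together with the limits $A_1(z/\lambda) \to \mp m_0$ and $P(z/\lambda) \to 2m_0\,\ONE_{z<0}$; comparing the scales $\delta/\lambda$ and $|b|^{-\gamma}$ in the two regimes $\lambda < \delta|b|^\gamma$ and $\lambda > \delta|b|^\gamma$ yields $|b(V, P_b)| \lesssim |b|^{1-\gamma}\lambda^{1/2}$, which completes~\eqref{mass:W:b}. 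The energy estimate~\eqref{energy:W:b} proceeds from $E(W_b) - E(W) = -b(P_b, \partial_y^2 W + W^5) + O(b^2)$, and the main term is controlled by testing the identity~\eqref{eq:W.2} against $P_b$ after integration by parts, the extra power $\lambda^{5/2}$ arising from the sharp estimates on $\Psi_\lambda$, $\Psi_\sigma$ and $\Psi_W$ in Proposition~\ref{pr:error}(ii). The main obstacle throughout is this rescaled-integral analysis in (iii), where the correct powers of $|b|$ and $\lambda$ must be extracted from the interaction between the oddness of $A_1$, the asymmetric limits of $P$, and the two competing length scales.
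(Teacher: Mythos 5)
Your treatment of (i), (ii) and the mass estimate in (iii) follows the paper's proof: expand $W_b=W+bP_b$, apply the Leibniz rule to $\chi_b P$ with the commutator terms supported in $\{|b|^{\gamma}y\in[-2,-1]\}$, use $(\cL P)'=\Lambda Q$ to produce the term $b(1-\chi_b)\Lambda Q$, expand $W_b^5-W^5$, and for \eqref{est:Psib:Q} identify the two contributions of size $b\lambda^{\frac12}$ (from $b\beta\Lambda P_b$ and from $20b\,\partial_y(Q^3V_1\theta P)$) which combine via \eqref{id:Lambda:P:Q}. This is exactly the paper's route. For the mass estimate, your ``two competing length scales'' observation is the paper's mechanism, but the detour through the oddness of $A_1$ and the asymptotic limits of $P$ is unnecessary: the crude bounds $|V|\lesssim\lambda^{\frac12}\omega+\lambda^{\frac12}\ONE_{[-2\delta,2\delta]}(\lambda y)$ and $|P_b|\lesssim\omega+\ONE_{[-2,0]}(|b|^{\gamma}y)$ already give $|b\int WP_b|\lesssim |b|+|b|\lambda^{\frac12}\min(\delta\lambda^{-1},|b|^{-\gamma})\lesssim |b|+|b|^{1-\gamma}\lambda^{\frac12}$, with no cancellation required.

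The one step that would fail as written is the energy estimate \eqref{energy:W:b}. Testing the identity \eqref{eq:W.2} against $P_b$ cannot yield it: \eqref{eq:W.2} is a dynamical identity containing $\partial_sW$ and the modulation quantities $\frac{\lambda_s}{\lambda}+\beta$ and $\frac{\sigma_s}{\lambda}-1$, which are not controlled at this stage (the lemma is a static statement valid for arbitrary $\cC^1$ parameters satisfying \eqref{wBT}--\eqref{wBTb}); moreover $\Psi_W=\cO(|\log\lambda|\,\lambda^3)$, so it is not the source of the power $\lambda^{\frac52}$. The paper's argument is elementary and of the same nature as the mass estimate: writing
\begin{equation*}
E(W_b)-E(W)=b\int(\partial_yW)(\partial_yP_b)+\tfrac12 b^2\int(\partial_yP_b)^2-\tfrac16\int\bigl((W+bP_b)^6-W^6\bigr),
\end{equation*}
the first two terms are $\cO(|b|)$ and $\cO(b^2)$ since $\|\partial_yP_b\|_{L^2}\lesssim 1$, and in the last term every contribution except $b\int W^5P_b$ is already $\cO(|b|)$, while
\begin{equation*}
\Bigl|b\int W^5P_b\Bigr|\lesssim |b|\int\bigl(\omega+\lambda^{\frac52}\ONE_{[-2\delta,2\delta]}(\lambda y)\bigr)|P_b|\lesssim |b|+|b|^{1-\gamma}\lambda^{\frac52}.
\end{equation*}
The power $\lambda^{\frac52}$ is simply the fifth power of the $\lambda^{\frac12}$ tail of $W$ from \eqref{est:W0}, integrated over the support of $P_b$, which has length $\lesssim|b|^{-\gamma}$. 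With this correction your plan closes.
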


\begin{proof} (i) The estimates \eqref{est:W0:b}-\eqref{est:W12:b} follow directly from the estimates \eqref{est:W0} for $W$ and the definitions of $\chi$, $W_b$ and $P_b$ in \eqref{def:chi}, \eqref{def:W:b} and \eqref{def:P:b}. 

\smallskip

(ii) Expanding $W_b=W+bP_b$ and using $(\cL P)' = \Lambda Q$ in the definition of $\Psi_b$, we find 
\begin{equation}\label{eq:Psib}
\begin{aligned}
\Psi_b & = b (1-\chi_b) \Lambda Q 
+ b \beta \Lambda P_b\\
&\quad + 3 b (\partial_y \chi_b) P'' + 3 b (\partial_y^2 \chi_b) P'
+b (\partial_y^3 \chi_b) P - b (\partial_y \chi_b) P + 5 b(\partial_y \chi_b) Q^4 P
\\&\quad + b\partial_y \left( 5 (W^4-Q^4) P_b + 10 b W^3 P_b^2 +10 b^2 W^2 P_b^3 + 5 b^3 W P_b^4 + b^4 P_b^5 \right).
\end{aligned}
\end{equation}
We estimate separately each term on the right-hand side of \eqref{eq:Psib}. First, from the properties of $\chi_b$ and $\Lambda Q \in \cY$, for $p=0,1$,
\begin{equation*} 
\left|b\partial_y^p \left((1-\chi_b) \Lambda Q\right) \right| \lesssim |b|e^{-\frac 34|y|} \ONE_{(-\infty,-1]}(|b|^{\gamma}y) \lesssim |b|e^{-\frac 14 |b|^{-\gamma}} \omega .
\end{equation*}
Next, we compute 
\begin{equation} \label{Lambda:P:b}
 \Lambda P_b = (\Lambda P)\chi_b+y(\partial_y \chi_b) P,
\end{equation}
and we estimate
\begin{equation} \label{Lambda:P:b.1}
\left|\Lambda P_b \right| \lesssim \omega+\ONE_{[-2,0]}(|b|^{\gamma}y) \quad \text{and} \quad \left|\partial_y\Lambda P_b \right| \lesssim \omega+|b|^{\gamma}\ONE_{[-2,-1]}(|b|^{\gamma}y) .
\end{equation}
Thus, it follows from \eqref{def:beta} and \eqref{on:theta} that
\begin{align*}
 \left|b \beta \Lambda P_b \right|
 &\lesssim |b| \lambda^{\frac12}\omega +|b| \lambda^{\frac12} \ONE_{[-2,0]}(|b|^{\gamma}y), \\
 \left|b \beta \partial_y(\Lambda P_b) \right|
 &\lesssim |b| \lambda^{\frac12}\omega+|b|^{1+\gamma} \lambda^{\frac12} \ONE_{[-2,-1]}(|b|^{\gamma}y).
 \end{align*}
Next, we deduce from \eqref{def:P:b} and $P \in \cZ_0$ that, for $p=0,1$, 
\begin{align*}
&\left| b \partial_y^p\left((\partial_y \chi_b) P''\right)\right| +\left| b \partial_y^p\left((\partial_y^2 \chi_b) P'\right) \right|
+\left| b\partial_y^p\left((\partial_y \chi_b) Q^4 P\right) \right|\lesssim e^{-\frac 14|b|^{-\gamma}} \omega, \\
&\left|b \partial_y^p\left((\partial_y^3 \chi_b) P\right) \right|+\left|b \partial_y^p\left((\partial_y \chi_b) P\right) \right|
\lesssim |b|^{1+(1+p)\gamma}\ONE_{[-2,-1]}(|b|^{\gamma}y) .
\end{align*}
Recall from~\eqref{def:W},
$W^4-Q^4=4Q^3V+6Q^2V^2+4QV^3+V^4$.
By~\eqref{est:V0}, for $p=0,1,2$,
\begin{align*}
& |\partial_y^p (Q^3 V)|+ |\partial_y^p (Q^2 V^2)| + |\partial_y^p (Q V^3)|\lesssim \lambda^\frac12 \omega,\\
& |\partial_y^p (V^4)|\lesssim \lambda^2 \omega + \delta^{-p} \lambda^{p+2}.
\end{align*}
Thus, for $p=0,1$,
\begin{equation*}
\big|b \partial_y^{1+p} \big((W^4-Q^4)P_b\big)\big|
\lesssimD |b|\lambda^{\frac12} \omega +|b|\lambda^{3+p} \ONE_{[-2,0]}(|b|^{\gamma}y)\\
+ |b|^{1+\gamma}\lambda^2 (\lambda^p+|b|^{p\gamma})\ONE_{[-2,-1]}(|b|^{\gamma}y).
\end{equation*}
Last, by using \eqref{est:W0} and \eqref{def:P:b}, we have, for $p=0,1$,
\begin{align*} 
\left| b^2\partial_y^{1+p}\left(W^3 P_b^2\right) \right|
&\lesssimD |b|^2\omega+|b|^2\lambda^{\frac52+p} \ONE_{[-2,0]}(|b|^{\gamma}y)
+|b|^{2+\gamma}\lambda^{\frac 32}(\lambda^p+|b|^{p\gamma}) \ONE_{[-2,-1]}(|b|^{\gamma}y), \\
\left| b^3\partial_y^{1+p}\left(W^2 P_b^3\right) \right|
&\lesssimD |b|^3\omega+|b|^3\lambda^{2+p} \ONE_{[-2,0]}(|b|^{\gamma}y)
+|b|^{3+\gamma}\lambda(\lambda^p+|b|^{p\gamma}) \ONE_{[-2,-1]}(|b|^{\gamma}y), \\
\left| b^4\partial_y^{1+p}\left(W P_b^4\right) \right|
&\lesssimD |b|^4\omega+|b|^4\lambda^{\frac32+p} \ONE_{[-2,0]}(|b|^{\gamma}y)
+|b|^{4+\gamma}\lambda^{\frac12}(\lambda^p+|b|^{p\gamma})\ONE_{[-2,-1]}(|b|^{\gamma}y),\\
\left| b^5 \partial_y^{1+p}(P_b^5) \right|
&\lesssimD |b|^5\omega+|b|^{5+(1+p)\gamma}\ONE_{[-2,-1]}(|b|^{\gamma}y).
\end{align*}
We deduce~\eqref{est:Psi0:b} by gathering these estimates. 

Proof of~\eqref{est:Psib:Q}. 
Taking the scalar product of the expression of $\Psi_b$ in \eqref{eq:Psib} with $Q$
and using the same estimates as before,
we observe that the two terms $b \beta \Lambda P_b$ and $20b\partial_y (Q^3V_1\theta P)$ give contributions of size $b\lambda^{\frac 12}$
while all the other terms give contributions of size at most $b^2+|b|\lambda$.
First, we observe
\begin{align*}
\left|\left(\beta \Lambda P_b-c_1\lambda^{\frac12}\Lambda P, Q\right) \right|
&\lesssim \left|\left(\beta (P_b- P),\Lambda Q\right) \right|
+|(\beta - \widetilde \beta) P, \Lambda Q)|
+|\widetilde \beta - c_1\lambda^{\frac12}| |( P, \Lambda Q)|\\
&\lesssim \lambda^{\frac12} e^{-\frac 14|b|^{-\gamma}} + \lambda^{\frac12}e^{-\frac{\delta}{4\lambda}}+ \lambda 
\lesssimD \lambda^{\frac 12} b^2 + \lambda.
\end{align*}
Second, we compute from \eqref{def:V_j} and \eqref{def:theta3} that 
\begin{equation*}
 \left|\left(\partial_y(Q^3V_1\theta P),Q\right)
+c_1\lambda^{\frac12}\big(Q^3A_1P,Q'\big)\right| \lesssim \lambda^{\frac 12} e^{-\frac\delta{4\lambda}}\lesssimD \lambda.
\end{equation*}
Thus, \eqref{est:Psib:Q} follows from the identity \eqref{id:Lambda:P:Q}.

(iii) Expanding $W_b=W+bP_b$, we obtain
\begin{equation*}
 \int W_b^2=\int W^2+2b \int WP_b+b^2\int P_b^2 .
\end{equation*}
Observe from \eqref{est:W0} and \eqref{def:P:b} that 
\begin{equation*}
\left| b \int WP_b \right| \lesssim |b|+|b|^{1-\gamma}\lambda^{\frac12} ,\quad
b^2 \int P_b^2 \lesssim |b|^{2-\gamma}.
\end{equation*}
Hence, \eqref{mass:W:b} is proved.
Expanding $W_b=W+bP_b$ in the definition of the energy, we obtain 
\begin{equation*} 
E(W_b)=E(W)+b\int (\partial_yW) (\partial_yP_b)+\frac12b^2\int (\partial_yP_b)^2 -\frac16 \int \left( (W+bP_b)^6-W^6 \right).
\end{equation*}
Observe from \eqref{est:W0} and \eqref{def:P:b} that 
\begin{gather*}
\left| b\int (\partial_yW) (\partial_yP_b) \right| \lesssim |b| ,\quad
b^2 \int (\partial_yP_b)^2 \lesssim b^2, \\
\left|\int \left( (W+bP_b)^6-W^6 \right) \right| \lesssim 
|b|+|b|^{1-\gamma}\lambda^{\frac52} .
\end{gather*}
Hence, \eqref{energy:W:b} is proved. 
\end{proof}

\subsection{Modulation close to the blow-up profile}\label{S:3.2}
Let 
\[
\mbox{$\cI=[S,s_0]$ where $S<s_0<0$ and $|s_0| \gg 1$.}
\]
We look for $(w(s,y),\lambda(s),\sigma(s))$ solution of
the rescaled equation~\eqref{rescaled} on $\cI$ 
such that $w$ has the form
\[
w(s,y) = W_{b}(s,y) + \varepsilon(s,y), 
\]
where the parameters $\lambda$, $\sigma$, $b$ satisfy \eqref{wBT}, \eqref{wBTb}
and the function $\varepsilon$ satisfies
\begin{equation}\label{eq:H1}
\|\varepsilon(s)\|_{H^1} \leq \delta^\frac 14
\end{equation}
and the orthogonality relations
\begin{equation}\label{ortho}
(\varepsilon(s),\Lambda Q)=(\varepsilon(s),y\Lambda Q)=(\varepsilon(s),Q)=0.
\end{equation}
Moreover, we assume that 
\begin{equation} \label{w:0}
w(S) \equiv W(S) \iff b(S)=0, \, \varepsilon(S)\equiv 0.
\end{equation}
In this context, the existence and uniqueness of $\cC^1$ functions $\lambda$, $\sigma$, $b$ ensuring the orthogonality relations~\eqref{ortho} follow from standard arguments based on the implicit function theorem.
In the case where the blow-up profile is $Q$, this is justified in~\cite[Lemma~2.5]{MaMeRa1} and the references therein.
The same proof applies to the refined ansatz $W$.
The orthogonality relations \eqref{ortho} are taken from~\cite{MaMejmpa} (as in the series of papers~\cite{CoMa1,CoMa2,MaMeNaRa,MaMeRa1,MaMeRa2,MaMeRa3,MaPi,Mjams}) in order to ensure the positivity of a quadratic form
related to a virial argument (see Lemma~\ref{le:virial}).

We derive the equation of $\varepsilon$. First, we check from \eqref{rescaled} that
\begin{equation}\label{eq:eps} 
\partial_s \varepsilon + \partial_y \left[\partial_y^2\varepsilon-\varepsilon+\left((W_b+\varepsilon)^5-W_b^5\right)\right]
-\frac{\lambda_s}{\lambda} \Lambda\varepsilon - \left( \frac{\sigma_s}{\lambda}-1\right) \partial_y \varepsilon
+ \cE_b(W) = 0
\end{equation}
where (see \eqref{eq:W.1} and \eqref{def:Psi:b} for the definitions of $\cE(W)$ and $\Psi_b$)
\begin{equation*}
\cE_b(W) = \cE(W) - b \Lambda Q 
-\left(\frac{\lambda_s}{\lambda}+\beta\right) \Lambda (bP_b)
- \left( \frac{\sigma_s}{\lambda} - 1 \right) (b\partial_y P_b)
+ \partial_s (bP_b)+ \Psi_b.
\end{equation*}
Next, using \eqref{eq:W.2}, we rewrite
\begin{equation}\label{def:EbW}
\cE_b(W) = - \vec{m} \cdot \vec{M}Q+ \Psi_M + \Psi_{W} + \Psi_b, 
\end{equation}
where
\begin{equation} \label{def:mM}
 \vec m =\begin{pmatrix} \frac{\lambda_s}{\lambda}+\widetilde\beta+b \\ \frac{\sigma_s}{\lambda}-1 \end{pmatrix},
 \quad \vec M =\begin{pmatrix} \Lambda \\ \partial_y \end{pmatrix},
\end{equation}
and 
\begin{equation}\label{def:PsiM}
\begin{split}
\Psi_M &= -\left(\frac{\lambda_s}{\lambda}+\widetilde\beta+b\right) \left( \Psi_{\lambda}+b\Lambda P_b\right)
- \left( \frac{\sigma_s}{\lambda} - 1 \right) (\Psi_{\sigma}+b \partial_y P_b) \\ & \quad+(\widetilde\beta-\beta)\left(\Lambda Q+\Psi_{\lambda}+b\Lambda P_b\right)+b\left(\Psi_{\lambda}+b\Lambda P_b\right)+ b_s \left(\chi_b + \gamma y \partial_y \chi_b \right) P.
\end{split}
\end{equation}

\begin{lemma}[Modulation estimates] 
For any $s \in \cI$,
\begin{align}
|\vec m|:=\left| \frac{\lambda_s}{\lambda} +\widetilde\beta +b\right|+
\left| \frac{\sigma_s}{\lambda} - 1 \right| \lesssimD \|\varepsilon\|_{L^2_\loc}+\lambda^3\left|\log \lambda\right|+|b|\lambda^{\frac12}+b^2,
 \label{est:lambda:sigma}\\
\left| b_s+2c_1b\lambda^{\frac12}\right| \lesssimD \|\varepsilon\|_{L^2_\loc}^2+\|\varepsilon\|_{L^2_\loc} (\lambda^{\frac12}+|b|)+\lambda^3\left|\log \lambda\right|+|b|\lambda+b^2 .\label{est:b_s}
\end{align}
\end{lemma}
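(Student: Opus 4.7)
My plan is to run the standard modulation argument adapted to the refined profile $W_b$. Differentiating each of the three orthogonality relations in \eqref{ortho} with respect to $s$ and inserting \eqref{eq:eps}, I obtain, for every test function $\xi \in \{\Lambda Q,\, y\Lambda Q,\, Q\}$,
\[
(\cE_b(W),\xi) = -(\partial_y N, \xi) + \frac{\lambda_s}{\lambda}(\Lambda\varepsilon, \xi) + \Bigl(\frac{\sigma_s}{\lambda}-1\Bigr)(\partial_y\varepsilon, \xi),
\]
where $N := \partial_y^2\varepsilon - \varepsilon + (W_b+\varepsilon)^5 - W_b^5$. Substituting \eqref{def:EbW} turns these three identities into a $3\times 3$ linear system for $m_1 := \lambda_s/\lambda+\widetilde\beta+b$, $m_2 := \sigma_s/\lambda-1$, and $b_s$. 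The elementary cancellations $(\Lambda Q, Q) = (Q', Q) = (Q', \Lambda Q) = (\Lambda Q, y\Lambda Q) = 0$ make the leading part of the coefficient matrix block triangular: the $\Lambda Q$- and $y\Lambda Q$-projections isolate $m_1$ and $m_2$ with nonzero coefficients $-\|\Lambda Q\|_{L^2}^2$ and $-(Q', y\Lambda Q)$ respectively, while the $Q$-projection isolates $b_s$ via the $b_s(\chi_b+\gamma y\partial_y\chi_b)P$ piece of $\Psi_M$, whose projection onto $Q$ equals $m_0^2 = (P,Q)$ up to corrections exponentially small in $|b|^{-\gamma}$.

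Next I estimate every remainder in the system. For $\xi\in\{\Lambda Q, y\Lambda Q\}$ the linear contribution $-(\partial_y\cL\varepsilon, \xi) = (\varepsilon, \cL\xi')$ is $O(\|\varepsilon\|_{L^2_\loc})$ because $\cL\xi'$ decays exponentially; crucially, for $\xi = Q$ the kernel identity $\cL Q' = 0$ (from Lemma~\ref{prop:L}(i)) makes this contribution vanish identically, and this is what ultimately permits the sharper second estimate. The nonlinear piece $\partial_y[5(W_b^4-Q^4)\varepsilon + O(\varepsilon^2)]$ contributes at most $O(\|\varepsilon\|_{L^2_\loc}(\lambda^{1/2}+|b|) + \|\varepsilon\|_{L^2_\loc}^2)$ by virtue of the pointwise bound $|W_b-Q| \lesssim \lambda^{1/2}+|b|$ from \eqref{est:W0:b}. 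The $\Psi_W$-projections are dominated by $\lesssim_\delta \lambda^3|\log\lambda|$ via \eqref{est:Psi_W}, refined for $\xi=Q$ by \eqref{est:Psi_lambda:Q}. The $\Psi_b$-projections are bounded by $\lesssim_\delta b^2 + |b|\lambda^{1/2}$ from \eqref{est:Psi0:b} for the first two test functions, while for $\xi=Q$ the identity \eqref{est:Psib:Q} extracts the explicit $(\Psi_b, Q) = 2c_1 m_0^2 b\lambda^{1/2} + O(b^2 + |b|\lambda)$ responsible for the $2c_1 b\lambda^{1/2}$ in the statement. The non-$b_s$ pieces of $\Psi_M$ inspected against each $\xi$ yield only a small feedback of order $|\vec m|(\lambda^{1/2} + |b|)$.

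The final step is to solve the resulting coupled system by a one-step bootstrap. The first two equations give
\[
|\vec m| \lesssimD |b_s| + \|\varepsilon\|_{L^2_\loc} + \lambda^3|\log\lambda| + |b|\lambda^{1/2} + b^2,
\]
and the third gives
\[
|b_s + 2c_1 b\lambda^{1/2}| \lesssimD |\vec m|(\lambda^{1/2}+|b|) + \|\varepsilon\|_{L^2_\loc}^2 + \|\varepsilon\|_{L^2_\loc}(\lambda^{1/2}+|b|) + \lambda^3|\log\lambda| + |b|\lambda + b^2.
\]
Substituting the first bound into the second, absorbing the resulting $|b_s|(\lambda^{1/2}+|b|)$ into the left-hand side (admissible since $\lambda^{1/2}+|b| = o(1)$ by \eqref{wBT}--\eqref{wBTb}), and then inserting the resulting $|b_s|$ estimate back into the first equation closes the system and delivers both advertised bounds.

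The main obstacle will be the careful bookkeeping in the second step: I must verify that every nonlinear contribution to the $\xi=Q$ projection carries either a factor $\|\varepsilon\|_{L^2_\loc}^2$ or the small factor $\lambda^{1/2}+|b|$, since a naive $\|\varepsilon\|_{L^2_\loc}$-term would spoil the refined bound on $b_s$. The two algebraic inputs that make this possible are $\cL Q' = 0$ (which removes the purely linear-in-$\varepsilon$ contribution in the $Q$-equation) and the pointwise estimate \eqref{est:W0:b} (which endows the cross-term $5(W_b^4-Q^4)\varepsilon$ with the needed small prefactor). Once this sharpness is established, the invertibility of the $3\times 3$ system and the final bootstrap are then a routine perturbation.
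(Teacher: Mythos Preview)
Your approach is essentially the paper's: differentiate the three orthogonality relations~\eqref{ortho}, insert~\eqref{eq:eps}--\eqref{def:EbW}, exploit the block-triangular structure from $(\Lambda Q,Q)=(Q',Q)=(Q',\Lambda Q)=0$, use $\cL Q'=0$ and $(\Lambda\varepsilon,Q)=-(\varepsilon,\Lambda Q)=0$ to kill the dangerous linear-in-$\varepsilon$ terms in the $Q$-equation, and close by absorbing the $|\vec m|(\lambda^{1/2}+|b|)$ and $|b_s|(\lambda^{1/2}+|b|)$ feedback. One small bookkeeping slip: the refinement~\eqref{est:Psi_lambda:Q} is not applied to $\Psi_W$ (whose projection onto $Q$ is already $O(\lambda^3|\log\lambda|)$ by~\eqref{est:Psi_W}) but rather to the $b\Psi_\lambda$ piece of $\Psi_M$, which is how the paper upgrades $(\Psi_M,Q)$ from $O(|b|\lambda^{1/2})$ to $m_0^2 b_s + O(|\vec m|(\lambda^{1/2}+|b|)+|b|\lambda+b^2)$; once you reroute that reference, your argument is the paper's.
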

\begin{proof}
By using the definition of $\cL$ in \eqref{def:L}, we rewrite the equation of $\varepsilon$ as
\begin{equation}\label{eq:eps.2} 
\partial_s \varepsilon + \partial_y\cL \varepsilon
-\vec{m} \cdot \vec{M}(Q+\varepsilon)+(\widetilde\beta+b)\Lambda \varepsilon+\partial_y\cR_1+\partial_y\cR_2+\Psi_M+\Psi_W+\Psi_b=0
\end{equation}
where 
\begin{equation*}
\cR_1=(W_b+\varepsilon)^5-W_b^5-5W_b^4\varepsilon ,\quad 
\cR_2=5(W_b^4-Q^4)\varepsilon .
\end{equation*}
We start by general estimates on the error terms in \eqref{eq:eps.2}.
We claim that, for any $f \in \cY$, 
\begin{align} 
\left|\left(\partial_y \cR_1,f\right) \right|
&\lesssim \|\varepsilon\|_{L^2_\loc}^2 , \label{est:mod.1}\\
\left|\left(\partial_y \cR_2,f\right) \right|
&\lesssim \left(\lambda^{\frac12}+|b|\right)\|\varepsilon\|_{L^2_\loc}, \label{est:mod.2}\\
\left|\left(\Psi_{M},f\right) \right|
& \lesssimD |b_s|+(|\vec{m}|+b) (\lambda^{\frac12}+|b|)+e^{-\frac{\delta}{4\lambda}}, \label{est:mod.3}\\
\left|\left(\Psi_{W},f\right) \right| & \lesssimD \lambda^3 \left|\log \lambda\right|, \label{est:mod.4}\\ 
\left|\left(\Psi_{b},f\right) \right| &\lesssimD |b|(\lambda^{\frac12}+|b|). \label{est:mod.5}
\end{align}
where the implicit constants depend on $f$.
Indeed, the estimate~\eqref{est:mod.1} is a consequence of 
$|\cR_1|\lesssim |W_b|^3 \varepsilon^2 + |\varepsilon|^5\lesssim |\varepsilon|^2$ by~\eqref{est:W0:b} and
$\|\varepsilon\|_{L^\infty}^2\lesssim \|\partial_y \varepsilon\|_{L^2}\|\varepsilon\|_{L^2}\lesssim 1$.
The estimate~\eqref{est:mod.2} follows from integration by parts $\left(\partial_y \cR_2,f\right)= -\left((W_b^4-Q^4)\varepsilon, f'\right)$
and $|W_b^4-Q^4| \lesssim \lambda^{\frac 12}+|b|$
using \eqref{est:V0} and \eqref{def:W:b}.
The estimates~\eqref{est:mod.3}-\eqref{est:mod.5} follow from \eqref{def:theta3}, \eqref{est:Psi_lambda}, \eqref{est:Psi_sigma}, \eqref{est:Psi_W}, \eqref{est:Psi0:b}, \eqref{Lambda:P:b} and \eqref{def:PsiM}. 

Now, using the first orthogonality relation in \eqref{ortho}, \eqref{eq:eps.2} and $(Q',\Lambda Q)=0$, we obtain 
\begin{align*}
\left|\left( \frac{\lambda_s}{\lambda}+\widetilde\beta+b\right)+
\frac{(\varepsilon,\cL(\Lambda Q)')}{\|\Lambda Q\|_{L^2}^2} \right|
&\lesssimD (|\vec{m}|+ \|\varepsilon\|_{L^2_\loc}) \left(\lambda^{\frac12}+|b|+ \|\varepsilon\|_{L^2_\loc}\right)+\left|\left(\partial_y \cR_1,\Lambda Q\right) \right|\\ 
&\quad +\left|\left(\partial_y \cR_2,\Lambda Q\right) \right|
 +\left|\left(\Psi_M,\Lambda Q\right) \right|
+\left|\left(\Psi_{W},\Lambda Q\right) \right|+\left|\left(\Psi_b,\Lambda Q\right) \right| .
\end{align*}
Hence, using \eqref{est:mod.1}-\eqref{est:mod.5} with $f=\Lambda Q$,
\begin{equation} \label{est:lambda.2}
\left| \frac{\lambda_s}{\lambda}+\widetilde\beta+b \right| \lesssimD
\|\varepsilon\|_{L^2_\loc}+ |\vec{m}| \left(\lambda^{\frac12}+|b|+ \|\varepsilon\|_{L^2_\loc}\right)+|b_s| +\lambda^3\left|\log \lambda\right|+|b|\lambda^{\frac12}+b^2 .
\end{equation}
Similarly, by the second orthogonality relation in \eqref{ortho} and $(Q',y\Lambda Q)=\|\Lambda Q\|_{L^2}^2$, we obtain
\begin{equation} \label{est:sigma.2}
\left| \frac{\sigma_s}{\lambda}-1\right|
\lesssimD
\|\varepsilon\|_{L^2_\loc}+ |\vec{m}| \left(\lambda^{\frac12}+|b|+ \|\varepsilon\|_{L^2_\loc}\right)+|b_s| +\lambda^3\left|\log \lambda\right|+|b|\lambda^{\frac12}+b^2 .
\end{equation}

Last, using the third orthogonality relation in \eqref{ortho} and \eqref{eq:eps.2}, we have
\begin{align*}
0=(\partial_s\varepsilon,Q )&=\left( \partial_y\cL\varepsilon,Q\right)-\left(\vec{m} \cdot\vec M (Q+\varepsilon) , Q \right)+(\widetilde \beta+b) (\Lambda \varepsilon,Q)\\ 
&\quad + \left(\partial_y \cR_1, Q\right) +\left(\partial_y \cR_2, Q\right)
+(\Psi_M,Q)+(\Psi_W,Q)+(\Psi_b,Q) .
\end{align*}
We observe the special cancellations
$\left(\partial_y\cL\varepsilon,Q\right)=-\left( \varepsilon,\cL Q'\right)=0$ and $(\Lambda \varepsilon,Q)=-(\varepsilon,\Lambda Q)=0$.
Moreover, it follows from the identities $(\Lambda Q, Q)=(Q',Q)=0$ that 
\begin{equation*} 
\left| \left(\vec{m} \cdot\vec M (Q+\varepsilon) , Q \right) \right| \lesssimD |\vec{m}| \|\varepsilon\|_{L^2_\loc}.
\end{equation*}
Recalling $m_0^2=(P,Q)>0$ (see \eqref{P:Q}) and arguing as for the proof of \eqref{est:mod.3} but using \eqref{est:Psi_lambda:Q} instead of \eqref{est:Psi_lambda}, we obtain
\begin{equation*} 
\left|(\Psi_M,Q)-m_0^2 b_s \right| \lesssimD |\vec{m}|(\lambda^{\frac12}+|b|)+|b|\lambda+b^2+e^{-\frac{\delta}{\lambda}}. 
\end{equation*}
Then, we deduce combining these estimates with \eqref{est:Psib:Q} and \eqref{est:mod.1}-\eqref{est:mod.4} with $f=Q$ that 
\begin{equation} \label{est:b_s.2}
\left| b_s+2c_1b\lambda^{\frac12}\right| 
\lesssimD (|\vec{m}|+\|\varepsilon\|_{L^2_\loc}) \left(\lambda^{\frac12}+|b|+ \|\varepsilon\|_{L^2_\loc}\right)
+\lambda^3\left|\log \lambda\right|+|b|\lambda+b^2 .
\end{equation}

Therefore, we conclude the proof of \eqref{est:lambda:sigma} combining \eqref{est:lambda.2}, \eqref{est:sigma.2} and \eqref{est:b_s.2},
and taking~$\delta$ small enough. The proof of \eqref{est:b_s} then follows from \eqref{est:lambda:sigma} and \eqref{est:b_s.2}. 
\end{proof}

\section{Bootstrap setting} \label{bootstrap:sec}

\subsection{Definition of a sequence of solutions}
The time variables $t$ and $s$ will be approximately related by 
$5t=(-s)^{-5}$ (see \eqref{time})
which motivates the definitions, for $n\geq 1$ large,
\[
T_n = \frac 15 \frac 1{n^5},\quad
S_n = - n.
\]
We denote by $W_{b,n}(t,y)$ the function $W_b(y;\lambda_n(t),\sigma_n(t),b_n(t))$ defined in Section~\ref{S:3}
for $\lambda_n(t)$, $\sigma_n(t)$ and~$b_n(t)$ to be chosen.
We define the solution $u_n$ of \eqref{gkdv} with initial data at $T_n$
\[
u_n(T_n) = \frac 1{\lambda_n^\frac 12(T_n)} W_{b,n}\left(T_n, \frac{x-\sigma_n(T_n)}{\lambda_n(T_n)}\right)
\]
with the choices
\begin{equation} \label{at_Tn}
\lambda_n(T_n)=G^{-1}(n) ,\quad
\sigma_n(T_n) = \frac1{n},\quad
b_n(T_n)= 0. 
\end{equation}
Recall from Section~\ref{S:2.3} the definition of $G$ and $|\lambda_n(T_n) - n^{-2}| \lesssim n^{-3}(\log n)$.

We consider the solution $u_n$ for times $t\geq T_n$. As long as it exists and remains close to $Q$ up to rescaling and translation,
we can decompose it as in Section \ref{S:3.2}
\[
u_n(t,x) = \frac 1{\lambda_n^{\frac 12}(t)} \left( W_{b,n}\left(t, \frac{x-\sigma_n(t)}{\lambda_n(t)}\right)
+ \varepsilon_n\left(t,\frac{x-\sigma_n(t)}{\lambda_n(t)}\right)\right),
\]
where $\varepsilon_n$ satisfies \eqref{ortho}.
At $t=T_n$, this decomposition satisfies \eqref{at_Tn} and $\varepsilon_n(T_n)\equiv 0$.

We define the rescaled time variable $s$
\begin{equation}\label{defts}
s=s(t) = S_n + \int_{T_n}^t \frac{dt'}{\lambda_n^3(t')}.
\end{equation}
From now on, any time-dependent function will be seen either as a function of $t$ or as a function of $s$.

\subsection{Notation for the energy-virial functional}

We follow the notation and presentation of \cite[Section 3.4]{MaMeRa1} and \cite[Section 3.1]{CoMa2}.
Let $\psi,\varphi\in\cC^\infty$ be non decreasing functions such that
\begin{equation*}
\psi(y) = \left\{ \begin{aligned} e^y \quad & \mbox{for } y < -1, \\ 1 \quad & \mbox{for } y > -\frac 12, \end{aligned} \right.
\qquad \mbox{and} \qquad
\varphi(y) = \left\{ \begin{aligned} e^y \quad & \mbox{for } y < -1, \\ y + 1 \quad & \mbox{for } -\frac 12 < y < \frac 12, \\
2 - e^{-y}\quad & \mbox{for } y > 1. \end{aligned} \right.
\end{equation*}
We note that such functions satisfy $\frac 12 e^y \leq \psi(y) \leq 3e^y$
and $\frac 13 e^y \leq \varphi(y) \leq 3e^y$ for $y<0$,
and $\frac 12 \varphi(y) \leq \psi(y) \leq 3\varphi(y)$ for all $y\in\RR$.
Moreover, we may choose the function $\varphi$ such that $\frac 14 \leq \varphi'(y) \leq 1$ for $y\in[-1,1]$
and so $\frac 13 e^{-|y|} \leq \varphi'(y) \leq 3e^{-|y|} $ for all $y\in\RR$.

For $B > 100$ large to be chosen later we define
\[
\psi_B(y) = \psi\left( \frac yB \right) \quad\mbox{and}\quad
\varphi_B(y) = \varphi\left( \frac yB\right).
\]
From the properties of $\psi$ and $\varphi$, we have for all $y<0$,
\begin{equation} \label{cut:left}
\frac 12 e^{\frac yB} \leq \psi_B(y) \leq 3 e^{\frac yB}, \quad
\frac 13 e^{\frac yB} \leq \varphi_B(y) \leq 3 e^{\frac yB},
\end{equation}
and for all $y\in\RR$,
\begin{equation} \label{cut:onR}
\left\{
\begin{gathered}
\frac 1{3} e^{-\frac{|y|}{B}} \leq B \varphi_B'(y) \leq 3e^{-\frac {|y|}B} \leq 9\varphi_B(y), \\
\frac 12 \varphi_B(y) \leq \psi_B(y) \leq 3\varphi_B(y), \\[2mm]
\psi_B'(y) + B^2|\psi_B'''(y)| + B^2|\varphi_B'''(y)| \lesssim \varphi_B'(y).
\end{gathered}
\right.
\end{equation}
We set
\[
\cN_B(\varepsilon_n) = \left(\int \left( (\partial_y \varepsilon_n)^2 + \varepsilon_n^2 \right) \varphi_B \right)^{\frac 12}
\]
and we observe 
\begin{equation} \label{eL2B}
\|\varepsilon_n\|_{L^2_\loc} \lesssim \left( B \int \varepsilon_n^2\varphi_B' \right)^{\frac 12} \lesssim \cN_B(\varepsilon_n).
\end{equation}

\subsection{Bootstrap estimates}
Let $C_1^{\star}>1$ and $C_2^{\star}>1$ be two large constants to be fixed later
(possibly depending on $\delta$).
We introduce the bootstrap estimates
\begin{equation} \label{BS}
\begin{aligned}
\left| \lambda_n(s) - \frac 1{s^2}\right|&\leq C_1^{\star} |s|^{-3} \log|s|,\\
\left|\sigma_n(s)-\frac 1{|s|}\right|&\leq C_1^{\star} |s|^{-2} \log|s|, \\
|b_n(s)|&\leq C_1^{\star} |s|^{-5} \log |s|,
\end{aligned}
\end{equation}
and 
\begin{equation}\label{BS2}
\cN_B(\varepsilon_n(s)) +
\left(\int_{S_n}^s \left(\frac\tau s\right)^4 \|\varepsilon_n(\tau)\|_{L^2_\loc}^2\, d\tau \right)^{\frac 12} 
\leq C_2^{\star} |s|^{-5} \log |s|.
\end{equation}
For $s_0<-1$, $|s_0|$ large enough to be chosen later, we define $S_n^\star \in (S_n,s_0]$ by 
\begin{equation*}
 S_n^{\star}=\sup \left\{\mbox{$S_n<\tilde{s}<s_0$ : \eqref{eq:H1}, \eqref{BS} and~\eqref{BS2} are satisfied for all $s\in [S_n,\tilde{s}]$}\right\}.
\end{equation*}
\begin{proposition} \label{prop:bootstrap}
There exist $C_1^{\star}>1$, $C_2^{\star}>1$, $B>1$ and $s_0<-1$, independent of $n$, such that for $n$ large enough, $S_n^{\star}=s_0$. 
\end{proposition}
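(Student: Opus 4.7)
The plan is a standard continuity--bootstrap argument: assume by contradiction that $S_n^\star < s_0$, so that at $s = S_n^\star$ at least one of~\eqref{eq:H1}, \eqref{BS}, \eqref{BS2} is saturated, and show that on $[S_n, S_n^\star]$ each bootstrap estimate can in fact be improved strictly by a factor $1/2$. The four free constants must be fixed in the hierarchical order $B \gg 1$ (to activate the coercivity of a mixed energy--virial functional with a uniform constant), then $C_2^\star \gg 1$ (to absorb all constants appearing in the $\varepsilon_n$ estimate), then $C_1^\star \gg 1$ (to absorb the $B$- and $C_2^\star$-dependent constants in the modulation estimates), and finally $|s_0| \gg 1$ so that $\lambda_n$, $|b_n|$ and all bootstrap quantities are as small as required. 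The Sobolev condition~\eqref{eq:H1} then follows a posteriori from~\eqref{BS2} and~\eqref{eL2B}.

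First I would close the modulation bounds. Plugging~\eqref{BS}--\eqref{BS2} and~\eqref{eL2B} into \eqref{est:lambda:sigma}--\eqref{est:b_s} (with $c_1 = -2$) gives
\[
\left|\tfrac{\lambda_s}{\lambda} + \widetilde\beta + b\right| + \left|\tfrac{\sigma_s}{\lambda}-1\right| \lesssimD |s|^{-5}\log|s|, \qquad |b_s - 4b\lambda^{1/2}| \lesssimD |s|^{-6}\log|s|.
\]
The first two are, after using~\eqref{def:beta_tilde}, the system $\lambda_s = 2\lambda^{3/2}+\ldots$, $\sigma_s = \lambda + \ldots$, whose leading behavior is encoded by the inverse of $G$ in~\eqref{def:G}. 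Integrating from $s = S_n$ with data~\eqref{at_Tn} and applying the asymptotics of $G^{-1}$ from Section~\ref{S:2.3} produces improved versions of the first two lines of~\eqref{BS} with a constant independent of $C_1^\star$. For $b_n$ the linear term has the ``wrong'' sign, so I use the integrating factor $s^{-4}$: since $\lambda^{1/2}\sim|s|^{-1}$, the equation becomes $(s^{-4}b_n)_s = O(|s|^{-2}\log|s|)$, and the initial condition $b_n(S_n) = 0$ integrates to $|b_n(s)| \lesssim |s|^{-5}\log|s|$ with an absolute constant. Choosing $C_1^\star$ large enough strictly improves all three bounds.

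The heart of the proof is the improvement of~\eqref{BS2}, for which I would adopt the mixed energy--virial functional (after the strategy of~\cite{MaMeRa1} alluded to in Section~\ref{S:Sketch})
\[
\cF(s) = \int \bigl((\partial_y\varepsilon_n)^2\psi_B + \varepsilon_n^2\varphi_B\bigr) - \tfrac{1}{3} \int \bigl[(W_{b,n}+\varepsilon_n)^6 - W_{b,n}^6 - 6 W_{b,n}^5 \varepsilon_n\bigr] \psi_B,
\]
possibly complemented by a localized virial correction built from a primitive of $\varphi_B'$. Differentiating $\cF/\lambda_n^2$ along~\eqref{eq:eps.2}--\eqref{def:EbW} produces four categories of contributions: (a) a coercive part bounded below by $c\lambda_n^{-2}\|\varepsilon_n\|_{L^2_\loc}^2$ via Lemma~\ref{prop:L}(iii), the orthogonalities~\eqref{ortho} and the localization argument of~\cite{MaMeRa1}; (b) modulation terms $\vec m \cdot (\cdot,\varepsilon_n)_{\psi_B}$ controlled by~\eqref{est:lambda:sigma}--\eqref{est:b_s}, Cauchy--Schwarz and~\eqref{eL2B}; (c) pairings of $\varepsilon_n$ with the residual forcings $\Psi_W,\Psi_M,\Psi_b$, estimated via~\eqref{est:Psi_W}, \eqref{def:PsiM}, \eqref{est:Psi0:b}, together with the sharp variation-of-energy identity~\eqref{est:deriv:energy:W}, whose leading contribution after the $\lambda_n^{-2}$ normalization is of order $\lambda_n^3|\log\lambda_n|/\lambda_n^2 \sim |\log s|/s^2$; (d) super-quadratic remainders controlled by~\eqref{est:mod.1}. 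Absorbing (b)--(d) into (a) plus a time-integrable source, and integrating from $S_n$ (where $\cF(S_n)=0$ by~\eqref{w:0}) to any $s\in[S_n,S_n^\star]$, yields
\[
\cN_B(\varepsilon_n(s))^2 + \int_{S_n}^s \left(\tfrac{\tau}{s}\right)^4 \|\varepsilon_n(\tau)\|_{L^2_\loc}^2 \, d\tau \lesssimD |s|^{-10}|\log s|^2,
\]
which strictly improves~\eqref{BS2} for $C_2^\star$ large.

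The main obstacle is step (c): one must verify that the residual sources entering the energy--virial identity really are of the sharp order $|\log s|/|s|^6$, and not of a weaker order that would not match the target $|s|^{-5}\log|s|$ in~\eqref{BS2}. This rigidly depends on the cancellations engineered in Proposition~\ref{pr:error} (the oddness of $A_1$, the $\lambda^{1/2}$ weight in $v_1$, and the refined estimate~\eqref{est:Psi_lambda:Q}), on the orthogonality conditions~\eqref{ortho} which kill the leading $\vec m \cdot \vec M Q$ contribution, and on the sharp bound~\eqref{est:deriv:energy:W} whose right-hand side exactly matches the bootstrap target. Any looser estimate at this step would force a strictly larger blow-up rate and collapse the scheme, which is precisely why the value $\nu=2/5$ is rigid in this construction.
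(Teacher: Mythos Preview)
Your overall bootstrap structure and the choice of energy--virial functional are correct, but there is a genuine gap: you claim that the global $H^1$ bound~\eqref{eq:H1} ``follows a posteriori from~\eqref{BS2} and~\eqref{eL2B}''. This is false. The weight $\varphi_B$ in $\cN_B$ decays like $e^{y/B}$ as $y\to-\infty$, so $\cN_B(\varepsilon_n)$ does not control $\|\varepsilon_n\|_{H^1}$ on the left, and~\eqref{eL2B} goes the wrong way. In the paper the global norms are obtained from \emph{conservation of mass and energy} (Lemma~\ref{le:mass-ener}): mass gives $\|\varepsilon_n\|_{L^2}\lesssim\delta^{1/2}$ directly, while energy---through the sharp variation bound~\eqref{est:deriv:energy:W}, which is used here and not in your step~(c)---gives $\|\partial_y\varepsilon_n\|_{L^2}\lesssim\delta^{-1/2}|s|^{-2}$. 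These global bounds are not merely needed to close~\eqref{eq:H1}; they feed back into the energy--virial computation itself via $\|\varepsilon_n\|_{L^\infty}$ in all the nonlinear terms (see~\eqref{GN:BS} and its repeated use in the proof of Proposition~\ref{pr:local-energy}). Without them the $f_1$, $f_4$, $f_5$ estimates do not close.

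Two smaller points. First, the hierarchy of constants is the reverse of what you propose: the paper fixes $C_1^\star$ \emph{before} $C_2^\star$ and \emph{independently} of it (Lemma~\ref{lemma:param:BS}). This works because in the $b$-estimate the $C_2^\star$-dependent contribution carries an extra factor $|s|^{-1/6}$, absorbed by taking $|s_0|$ large; your order would otherwise be circular. Second, the integrating factor for $b$ is $s^4$, not $s^{-4}$: from $b_s+4s^{-1}b\approx 0$ one has $(s^4 b)_s=s^4(b_s+4s^{-1}b)$, which is what is integrated.
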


Proposition~\ref{prop:bootstrap} is proved in the rest of Section~\ref{bootstrap:sec} and in Section~\ref{Sec:5}.
We work on the time interval $\cI=[S_n,S_n^{\star}]$ where the bootstrap estimates~\eqref{eq:H1}, \eqref{BS} and~\eqref{BS2} hold.
In every step of the proof, the value of $|s_0|$ will be taken large enough, depending on the constants $\delta$, $C_1^\star$, $C_2^\star$
and~$B$, but independent of $n$.
For simplicity of notation, we drop the $n$ in index of the functions $(W_{b,n},\varepsilon_n,\lambda_n,\sigma_n,b_n)$.

\subsection{Closing the parameter estimates}
The next lemma follows from inserting \eqref{BS}-\eqref{BS2} into the estimates~\eqref{est:lambda:sigma} and \eqref{est:b_s},
using also \eqref{eL2B}.
\begin{lemma}
For all $s\in\cI$,
\begin{align}
 |\vec m |= \left| \frac{\lambda_s}{\lambda} +\widetilde \beta + b \right|+\left| \frac{\sigma_s}{\lambda} - 1 \right|
 &\lesssimD \|\varepsilon\|_{L^2_\loc} + C_1^{\star} |s|^{-6}\log |s| , \label{cons:BS.1}\\
 \left| b_s + 4 s^{-1} b \right| & \lesssimD |s|^{-1}\|\varepsilon\|_{L^2_\loc} + |s|^{-6}\log |s|. \label{cons:BS.2} 
\end{align}
\end{lemma}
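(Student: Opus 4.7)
The plan is to substitute the bootstrap assumptions \eqref{BS} and \eqref{BS2} directly into the general modulation bounds \eqref{est:lambda:sigma} and \eqref{est:b_s}; this is purely a mechanical insertion, and no new analytical ideas are required beyond what is already in Section~\ref{S:3}.

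For \eqref{cons:BS.1}, I would bound each non-$\varepsilon$ term on the right-hand side of \eqref{est:lambda:sigma} using \eqref{BS}. From $\lambda = s^{-2} + O(C_1^\star |s|^{-3}\log|s|)$, one obtains $\lambda^3|\log\lambda| \lesssim |s|^{-6}\log|s|$. From $|b|\leq C_1^\star |s|^{-5}\log|s|$ and $\lambda^{1/2}\lesssim |s|^{-1}$, the cross term satisfies $|b|\lambda^{1/2}\lesssim C_1^\star |s|^{-6}\log|s|$, while $b^2 \lesssim (C_1^\star)^2 |s|^{-10}(\log|s|)^2$, which is much smaller than $|s|^{-6}\log|s|$ once $|s_0|$ is taken large. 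Summing these three contributions yields \eqref{cons:BS.1}.

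For \eqref{cons:BS.2}, the main point is to convert the modulation quantity $2c_1 b\lambda^{1/2}$ into $4 s^{-1} b$. Since $c_1=-2$ and $s<0$, the first bootstrap estimate gives $\lambda^{1/2}=-s^{-1}+O(C_1^\star |s|^{-2}\log|s|)$, so
\[
2c_1 b\lambda^{1/2} = -4b\lambda^{1/2} = 4 s^{-1} b + O\bigl(|b|\,|s|^{-2}\log|s|\bigr),
\]
and the replacement error is at most $C_1^\star |s|^{-7}(\log|s|)^2$, which is absorbed in $|s|^{-6}\log|s|$. For the remaining terms on the right-hand side of \eqref{est:b_s}, I would use \eqref{eL2B} together with \eqref{BS2} to note $\|\varepsilon\|_{L^2_\loc}\lesssim \cN_B(\varepsilon)\lesssim C_2^\star |s|^{-5}\log|s|$; hence
\[
\|\varepsilon\|_{L^2_\loc}^2 + \|\varepsilon\|_{L^2_\loc}(\lambda^{1/2}+|b|) \lesssim |s|^{-1}\|\varepsilon\|_{L^2_\loc},
\]
once $|s_0|$ is sufficiently large (depending on $\delta$, $C_1^\star$, $C_2^\star$). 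Finally $\lambda^3|\log\lambda|+|b|\lambda+b^2 \lesssim |s|^{-6}\log|s|$ by the same arithmetic as for \eqref{cons:BS.1}. Combining gives \eqref{cons:BS.2}.

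I do not anticipate a genuine obstacle here: the structure of the estimate is already dictated by \eqref{est:lambda:sigma} and \eqref{est:b_s}, and the bootstrap assumptions are precisely calibrated so that each non-$\varepsilon$ source of error collapses to the target scale $|s|^{-6}\log|s|$ (or $|s|^{-1}\|\varepsilon\|_{L^2_\loc}$ in the $b_s$ equation). The only mild care needed is in tracking the sign of $s$ when replacing $\lambda^{1/2}$ by $-s^{-1}$, and in verifying that the discarded bootstrap losses of type $(C_1^\star)^2 |s|^{-10}(\log|s|)^2$ are genuinely of lower order than the retained $C_1^\star |s|^{-6}\log|s|$, which holds for $|s_0|$ chosen large enough depending on $C_1^\star$. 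These thresholds are absorbed into the tacit dependence of $|s_0|$ on the bootstrap constants noted at the end of Section~\ref{bootstrap:sec}.
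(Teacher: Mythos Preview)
Your proposal is correct and matches the paper's approach exactly: the paper simply states that the lemma ``follows from inserting \eqref{BS}--\eqref{BS2} into the estimates~\eqref{est:lambda:sigma} and \eqref{est:b_s}, using also \eqref{eL2B},'' and you have supplied precisely the arithmetic that this insertion entails, including the sign bookkeeping $\lambda^{1/2}=-s^{-1}+O(C_1^\star|s|^{-2}\log|s|)$ needed to pass from $2c_1b\lambda^{1/2}$ to $4s^{-1}b$.
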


We strictly improve the bootstrap estimates~\eqref{BS} on the parameters. 
\begin{lemma} \label{lemma:param:BS}
There exists $C_1^{\star}>1$, independent of $C_2^{\star}$
such that for all $s \in \cI$,
\begin{equation} \label{BS:improved}
\begin{aligned}
\left| \lambda(s) - \frac 1{s^2}\right|&\leq \frac{C_1^{\star}}2 |s|^{-3} \log|s|, \\ 
\left|\sigma(s)-\frac 1{|s|}\right|&\leq \frac{C_1^{\star}}2 |s|^{-2} \log|s|,\\ 
|b(s)|&\leq \frac{C_1^{\star}}2 |s|^{-5} \log |s|.
\end{aligned}
\end{equation}
\end{lemma}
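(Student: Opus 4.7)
The plan is to reduce Lemma~\ref{lemma:param:BS} to ODE integration starting from the initial conditions $b(S_n)=0$, $\sigma(S_n)=|S_n|^{-1}$ and $G(\lambda(S_n))=-S_n$ fixed by~\eqref{w:0}--\eqref{at_Tn}. First, I insert the bootstrap \eqref{BS2} into \eqref{cons:BS.1}--\eqref{cons:BS.2} via \eqref{eL2B}, yielding the pointwise control
\[
|\vec m(s)|\lesssimD C_2^\star|s|^{-5}\log|s|,\qquad |b_s+4s^{-1}b|\lesssimD C_2^\star|s|^{-6}\log|s|,
\]
which I then integrate separately in the three parameters.

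The $\lambda$-estimate exploits the identity $\widetilde\beta\,\lambda G'(\lambda)\equiv 1$ built into~\eqref{def:G}. Setting $E:=\lambda_s/\lambda+\widetilde\beta+b$, so $|E|\le|\vec m|$, the $\lambda$-equation rewrites as
\[
\tfrac{d}{ds}G(\lambda(s))=-1+(E-b)\lambda G'(\lambda).
\]
Integrating from $S_n$ with $G(\lambda(S_n))+S_n=0$ and using $|\lambda G'(\lambda)|\lesssim|s|$ together with the bootstrap on $|E|,|b|$ gives $|G(\lambda(s))+s|\lesssimD (C_1^\star+C_2^\star)|s|^{-3}\log|s|$. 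Composing with the expansion $G^{-1}(-s)=s^{-2}+\cO(|s|^{-3}\log|s|)$ from Section~\ref{S:2.3} (with a universal constant $c_\star$ depending only on $\delta$) and $(G^{-1})'(-s)=\cO(|s|^{-3})$ produces
\[
|\lambda(s)-s^{-2}|\le c_\star|s|^{-3}\log|s|+c(\delta)(C_1^\star+C_2^\star)|s|^{-6}\log|s|,
\]
and the second term is absorbed into half the first once $|s_0|$ is large. Fixing $C_1^\star\ge 4c_\star$ closes the $\lambda$-bootstrap.

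The $\sigma$-bound follows by integrating $\sigma_s=\lambda+\lambda(\sigma_s/\lambda-1)$ from $S_n$: the piece $\int_{S_n}^s\tau^{-2}d\tau=|s|^{-1}-|S_n|^{-1}$ combines with $\sigma(S_n)=|S_n|^{-1}$ to give $|s|^{-1}$, the improved $\lambda$-bound yields $\int_{S_n}^s(\lambda-\tau^{-2})d\tau=\cO(|s|^{-2}\log|s|)$ with a universal constant, and $\int_{S_n}^s\lambda|\vec m|d\tau\lesssimD C_2^\star|s|^{-6}\log|s|$ is lower order. For $b$, the linear coefficient $-4/s$ suggests the integrating factor $\tau\mapsto\tau^4$: the pointwise bound becomes $|(\tau^4 b)_\tau|\lesssimD C_2^\star|\tau|^{-2}\log|\tau|$, and integrating from $S_n$ where $b(S_n)=0$, using $\int_{S_n}^s|\tau|^{-2}\log|\tau|\,d\tau\lesssim|s|^{-1}\log|s|$, yields $|b(s)|\lesssimD C_2^\star|s|^{-5}\log|s|$.

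The main obstacle lies precisely in this last estimate: unlike for $\lambda$ and $\sigma$, where the $C_2^\star$-dependent contribution is of strictly lower order in $|s|$ and can be absorbed by enlarging $|s_0|$, the leading coefficient in the $b$-bound scales linearly in $C_2^\star$. The statement that $C_1^\star$ is independent of $C_2^\star$ must therefore be read in conjunction with Section~\ref{Sec:5}: once the virial-energy functional provides an improvement $\cN_B(\varepsilon(s))\le c(\delta)|s|^{-5}\log|s|$ with a $\delta$-only constant, $C_2^\star$ itself reduces to a $\delta$-only constant, and then $C_1^\star$ can be chosen as a fixed multiple of $c(\delta)$ without reference to $C_2^\star$.
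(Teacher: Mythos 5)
Your treatment of $\lambda$ and $\sigma$ is essentially the paper's proof: integrate $\frac{d}{ds}G(\lambda)=-1+(E-b)\lambda G'(\lambda)$ (the paper writes this as $|\lambda_s G'(\lambda)+1|\lesssim |s|^{-2}$), invert $G$ to get $\lambda(s)=|s|^{-2}+c_2|s|^{-3}\log|s|+\cO(|s|^{-3})$ with the logarithmic correction governed by the universal profile constant $c_2$, and then integrate $\sigma_s$. In both cases the $C_2^{\star}$-dependent contributions carry a strictly better power of $|s|^{-1}$ and are absorbed by enlarging $|s_0|$ (which is allowed to depend on $C_2^{\star}$), so $C_1^{\star}$ comes out universal. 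That part is fine.

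The $b$-estimate is where your proof has a genuine gap. Your integration of $|(\tau^4 b)_\tau|$ uses only the pointwise bound $\|\varepsilon(\tau)\|_{L^2_\loc}\lesssim \cN_B(\varepsilon(\tau))\le C_2^{\star}|\tau|^{-5}\log|\tau|$ and therefore yields $|b(s)|\lesssimD C_2^{\star}|s|^{-5}\log|s|$, with \emph{no} extra gain in $|s|^{-1}$; this cannot be absorbed into $\frac{C_1^{\star}}{2}|s|^{-5}\log|s|$ by taking $|s_0|$ large, and it is exactly the situation the lemma is designed to exclude. The paper's resolution is the interpolation
\[
|\tau|^3\|\varepsilon(\tau)\|_{L^2_\loc}\lesssimD |\tau|^3\,[\cN_B(\varepsilon(\tau))]^{\frac23}\,\|\varepsilon(\tau)\|_{L^2_\loc}^{\frac13},
\]
followed by H\"older with exponents $\frac65$ and $6$ against the \emph{time-integrated} quantity $\int_{S_n}^s|\tau|^4\|\varepsilon(\tau)\|_{L^2_\loc}^2\,d\tau$ appearing in the second half of~\eqref{BS2}. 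This is precisely why that integrated term is carried in the bootstrap: it converts the bound into $C_2^{\star}|s|^{-5-\frac16}\log|s|$, and the extra factor $|s|^{-\frac16}$ lets one absorb $C_2^{\star}$ by taking $|s_0|$ large. Your proposed alternative --- postponing the issue to Section~\ref{Sec:5} so that $C_2^{\star}$ becomes a $\delta$-only constant and then letting $C_1^{\star}$ be a multiple of it --- reverses the order in which the constants are fixed and is circular as written: the constants entering the energy estimate~\eqref{time:local} (hence the admissible value of $C_2^{\star}$ in Lemma~\ref{lemma:eps_loc:BS}) are only controlled because $C_1^{\star}$ has already been fixed and ``is not tracked anymore''. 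Without the interpolation/H\"older step, you have not proved the lemma as stated.
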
 

\begin{proof}
First, from~\eqref{cons:BS.1} and~\eqref{eL2B}-\eqref{BS}-\eqref{BS2}, we have
$\left| \lambda_s G'(\lambda) + 1 \right|\lesssim |s|^{-2}$,
where the function $G$ is defined in \eqref{def:G}.
Integrating on $[S_n,s]$ for any $s\in \cI$ and using \eqref{at_Tn}, we obtain
$|G(\lambda(s)) - |s|| \lesssim |s|^{-1}$.
Since $G(\lambda) = \lambda^{-\frac 12} - \frac {c_2}4 \log \lambda + \cO(1) $ as $ \lambda \downarrow 0 $, we obtain
\begin{equation}\label{numero}
\lambda(s) = |s|^{-2} + c_2 |s|^{-3} \log |s| + \cO(|s|^{-3}),
\end{equation}
which implies the estimate for $\lambda$ in \eqref{BS:improved} by choosing $C_1^{\star} \ge 4c_2$ and $|s_0|$ large enough.

Second, from \eqref{cons:BS.1}, \eqref{BS2} and \eqref{numero}, we have
$| \sigma_s - |s|^{-2} | \lesssim |s|^{-3} \log |s|$.
By integration, using $\sigma(S_n) = |S_n|^{-1}$,
\[
| \sigma(s) - |s|^{-1} | \leq c |s|^{-2} \log |s|
\]
for some $c\geq 0$.
Choosing $C_1^{\star} \ge 4c$, this implies the estimate for $\sigma$ in \eqref{BS:improved}. 

Last, from \eqref{cons:BS.2} and \eqref{BS}-\eqref{BS2},
$\ | (s^4 b)_s \ | \lesssimD |s|^{3} \|\varepsilon\|_{L^2_\loc} + |s|^{-2}\log |s|$,
and using $b(S_n)=0$, 
\begin{equation*}
|s|^4 |b(s)| \lesssimD \int_{S_n}^s |\tau|^3 \|\varepsilon(\tau)\|_{L^2_\loc} d\tau+ |s|^{-1}\log |s|. 
\end{equation*}
Using \eqref{eL2B} and then \eqref{BS2}, we observe that
\[ 
|\tau|^3\|\varepsilon(\tau)\|_{L^2_\loc}
\lesssimD |\tau|^3[\cN_B(\varepsilon(\tau))]^{\frac23}\|\varepsilon(\tau)\|_{L^2_\loc}^{\frac13}
\lesssimD (C_2^{\star})^{\frac23}|\tau|^{-1}(\log|\tau|)^{\frac23}\cdot |\tau|^{\frac23}\|\varepsilon(\tau)\|_{L^2_\loc}^{\frac13}.
\]
By the H\"older inequality and then again \eqref{BS2}, it follows that 
\begin{equation*}
\begin{aligned}
\int_{S_n}^s |\tau|^3 \|\varepsilon(\tau)\|_{L^2_\loc} d\tau 
&\lesssimD
(C_2^{\star})^{\frac23} \left(\int_{S_n}^s |\tau|^{-\frac65}(\log|\tau|)^{\frac45} d\tau \right)^{\frac56}\left(\int_{S_n}^s |\tau|^4 \|\varepsilon(\tau)\|_{L^2_\loc}^2 d\tau \right)^{\frac16} \\ 
& \lesssimD C_2^{\star}\left(|s|^{-\frac16}(\log|s|)^{\frac23}\right)\left(|s|^{-1}(\log|s|)^{\frac13} \right)
= C_2^{\star}|s|^{-1-\frac16}\log(|s|) .
\end{aligned}
\end{equation*}
Thus, $|b(s)| \leq c_\delta |s|^{-5}\log |s|$
which yields the estimate for $b$ in \eqref{BS:improved} by choosing $C_1^{\star} \ge 4c_\delta$.
\end{proof}
The constant $C_1^\star$ in \eqref{BS} is now fixed according to Lemma~\ref{lemma:param:BS} and it is not tracked anymore.

\subsection{Control of global norms from the conservation of mass and energy}

\begin{lemma}\label{le:mass-ener}
For any $s \in \cI$,
\begin{equation}\label{GN:BS}
\|\varepsilon(s)\|_{L^2} \lesssim \delta^{\frac 12} ,\quad
\|\partial_y \varepsilon(s)\|_{L^2}\lesssim \delta^{-\frac 12}s^{-2}, \quad
\|\varepsilon(s)\|_{L^{\infty}}\lesssim |s|^{-1} .
\end{equation}
\end{lemma}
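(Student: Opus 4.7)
The plan is to exploit the two conservation laws of \eqref{gkdv} (mass and energy) for the true solution $u_n$, combined with the initial condition $w(S_n) = W(S_n)$ (equivalently $\varepsilon(S_n) \equiv 0$, $b(S_n)=0$) and the three orthogonality conditions in~\eqref{ortho}.

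First, for the $L^2$ bound, use that $\int w^2 = \int W(S_n)^2$ (since $u_n$ has conserved mass and $w$ is obtained from $u_n$ by scaling and translation). Expand $w = W_b + \varepsilon$ and employ the orthogonality $(\varepsilon,Q)=0$ to rewrite the cross term as $2\int (W_b - Q)\varepsilon$. Since $\|W_b - Q\|_{L^2} \lesssim \delta^{1/2}$ (from \eqref{est:V0} and the smallness of $b$), and both $|\int W^2 - \int Q^2|$ and $|\int W_b^2 - \int W^2|$ are $\cO_\delta(\delta)$ by \eqref{est:asymp:mass:W} and \eqref{mass:W:b}, a Cauchy--Schwarz plus Young absorption
\[
2|(W_b-Q,\varepsilon)| \leq \tfrac12 \|\varepsilon\|_{L^2}^2 + C\delta
\]
yields $\|\varepsilon\|_{L^2}^2 \lesssim \delta$.

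Second, for the $H^1$ bound, the conservation of $E(u_n) = \lambda^{-2} E(w)$ means $E(w)(s)/\lambda(s)^2$ is constant in $s$ and thus equal to $E(W)(S_n)/\lambda(S_n)^2$. Comparing this with the flow of $E(W)/\lambda^2$: integrating~\eqref{est:deriv:energy:W} from $S_n$ to $s$, and inserting the modulation control~\eqref{cons:BS.1} together with the bootstrap~\eqref{BS}--\eqref{BS2} (exactly as for the estimate $\int_{S_n}^s|\tau|^3 \|\varepsilon\|_{L^2_\loc} d\tau \lesssim_\delta |s|^{-7/6}\log|s|$ used earlier), one obtains $\left|E(W)(s)/\lambda(s)^2 - E(W)(S_n)/\lambda(S_n)^2\right| \lesssim_\delta 1$. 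Hence $|E(w)(s) - E(W)(s)| \lesssim_\delta \lambda(s)^2 \lesssim_\delta |s|^{-4}$, and combined with~\eqref{energy:W:b} this gives $|E(w)(s) - E(W_b)(s)| \lesssim_\delta |s|^{-4}$. Now expand
\[
E(w) - E(W_b) = -\int(\partial_y^2 W_b + W_b^5)\varepsilon + \tfrac12(\cL\varepsilon,\varepsilon) + \cO(\lambda^{1/2}\|\varepsilon\|_{H^1}^2 + \|\varepsilon\|_{H^1}^3).
\]
The linear term: using $Q''+Q^5=Q$ and $(\varepsilon,Q)=0$, it reduces to $-\int[\partial_y^2(V+bP_b) + (W_b^5-Q^5)]\varepsilon$, which is bounded by $C_\delta \|\varepsilon\|_{L^2}^2 + C_\delta \delta$ after Cauchy--Schwarz and Young (using the $L^2$ bound already proved in Step 1). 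Coercivity~\eqref{coercivity.2} together with the full set of orthogonality conditions~\eqref{ortho} implies $(\cL\varepsilon,\varepsilon) \geq \nu_0 \|\varepsilon\|_{H^1}^2$. Finally, the cubic remainder is absorbed via the bootstrap $\|\varepsilon\|_{H^1}\leq \delta^{1/4}$. Rearranging yields $\|\varepsilon\|_{H^1}^2 \lesssim_\delta |s|^{-4}$, which gives the gradient bound.

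Last, the $L^\infty$ estimate follows from the Gagliardo--Nirenberg interpolation $\|\varepsilon\|_{L^\infty}^2 \lesssim \|\varepsilon\|_{L^2}\|\partial_y\varepsilon\|_{L^2}$ combined with the previous two bounds, giving $\|\varepsilon\|_{L^\infty}\lesssim \delta^{1/4}\cdot \delta^{-1/4}|s|^{-1} = |s|^{-1}$. The main obstacle is the second step: one cannot use pointwise smallness of $E(W)(s)$ directly (since a priori $|E(W)| \lesssim \lambda + \delta$ gives only $|E(w)| \lesssim (n/|s|)^4 \delta$, which is unbounded as $|s|$ shrinks). The crucial trick is that $E(u_n)$ and $E(W)/\lambda^2$ satisfy \emph{compatible} bounded-variation statements, so their difference $(E(w)-E(W))/\lambda^2$ is the $\cO_\delta(1)$ quantity, which translates into the desired $|s|^{-4}$ upper bound upon multiplying by $\lambda^2$.
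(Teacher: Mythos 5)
Your first step (the $L^2$ bound via mass conservation, the orthogonality $(\varepsilon,Q)=0$ and a Cauchy--Schwarz absorption) and your control of the variation of $E(W)/\lambda^2$ via \eqref{est:deriv:energy:W} and the conservation law coincide with the paper's argument. The gap is in the second step, and it is a genuine one. The second variation of the energy at $W_b$ is $-\partial_y^2-5W_b^4\approx -\partial_y^2-5Q^4$, not $\cL=-\partial_y^2+1-5Q^4$: expanding $E(W_b+\varepsilon)-E(W_b)$, the quadratic part is $\tfrac12\int(\partial_y\varepsilon)^2-\tfrac52\int W_b^4\varepsilon^2$, which equals $\tfrac12(\cL\varepsilon,\varepsilon)-\tfrac12\|\varepsilon\|_{L^2}^2$ up to the localized correction $\tfrac52\int(Q^4-W_b^4)\varepsilon^2$. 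The missing term $\tfrac12\|\varepsilon\|_{L^2}^2$ is only known to be $\cO(\delta)$ from your Step 1 (and it genuinely cannot be expected to decay in $s$: the mass of the profile $W_b$ depends on $(\lambda,\sigma)$ at order $\delta$ through the tail $V_1\theta$, so $\varepsilon$ must carry $L^2$ mass of that order). Consequently, after invoking \eqref{coercivity.2} you can only conclude $\nu_0\|\varepsilon\|_{H^1}^2\lesssim_\delta |s|^{-4}+\|\varepsilon\|_{L^2}^2\lesssim_\delta\delta$, which gives neither the decay $\|\partial_y\varepsilon\|_{L^2}\lesssim\delta^{-1/2}s^{-2}$ nor, via interpolation, $\|\varepsilon\|_{L^\infty}\lesssim|s|^{-1}$. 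Note also that your stated conclusion $\|\varepsilon\|_{H^1}^2\lesssim_\delta|s|^{-4}$ is strictly stronger than the lemma (it would imply $\|\varepsilon\|_{L^2}\lesssim_\delta|s|^{-2}$), which should have been a warning sign: no argument based on $\cL$-coercivity of the conserved quantities can produce it, because the linear and constant terms in the mass/energy balance are only $\cO(\delta)$.

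The paper's mechanism is different and does not use coercivity of $\cL$ at all here: one keeps $\tfrac12\int(\partial_y\varepsilon)^2$ as the (manifestly nonnegative) good term and observes that every other $\varepsilon$-quadratic contribution is either \emph{localized} or absorbable. Concretely, $\int(W_b+\varepsilon)^6-W_b^6-6W_b^5\varepsilon$ is bounded by $\int W_b^4\varepsilon^2+\int\varepsilon^6$; since $W_b^4\lesssim\omega+\lambda^2+|b|^4$ by \eqref{est:W0:b}, the first piece is $\lesssim\|\varepsilon\|_{L^2_\loc}^2+(\lambda^2+|b|^4)\|\varepsilon\|_{L^2}^2$, where $\|\varepsilon\|_{L^2_\loc}^2$ is already known to be $\lesssim(C_2^\star)^2|s|^{-10}(\log|s|)^2\ll|s|^{-4}$ from the bootstrap \eqref{BS2} and \eqref{eL2B}; the piece $\int\varepsilon^6\lesssim\|\varepsilon\|_{L^2}^4\|\partial_y\varepsilon\|_{L^2}^2\lesssim\delta^2\|\partial_y\varepsilon\|_{L^2}^2$ is absorbed into the left-hand side. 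Similarly $\int(W_b^5-Q^5)\varepsilon$ is split into a localized part controlled by $\|\varepsilon\|_{L^2_\loc}$ and a small global part. It is precisely the bootstrap assumption on the local norm — not a spectral lower bound — that converts the $\cO(\delta)$-sized potential term into an $o(|s|^{-4})$ quantity and allows the conclusion $\|\partial_y\varepsilon\|_{L^2}^2\lesssim\delta^{-1}s^{-4}$. To repair your proof you should replace the coercivity step by this localization argument.
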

For $|s_0|$ large enough and $\delta$ small, this improves strictly the bootstrap~\eqref{eq:H1}.
\begin{proof} 
From~\eqref{w:0} and $\int w^2(S)=\int w^2(s)$, it follows that
\begin{equation*}
 \int W^2(S)= \int w^2(s)=\int (W_b+\varepsilon)^2(s)=\int W_b^2(s)+2\int (W+bP_b)(s) \varepsilon(s)+\int \varepsilon^2(s) .
\end{equation*}
Using \eqref{est:asymp:mass:W}, we have $\left| \int W^2(s)-\int W^2(S) \right| \lesssim \delta$.
Moreover, we deduce from the orthogonality relation $\int \varepsilon Q =0$,
 the Cauchy-Schwarz inequality and the estimates \eqref{est:V0} and \eqref{def:P:b} that
\begin{equation*}
 \left| \int (W+bP_b) \varepsilon \right| \le \left( \|V\|_{L^2}+|b|\|P_b\|_{L^2}\right) \|\varepsilon \|_{L^2} \le \frac12 \int \varepsilon^2+c(\delta+|\sigma|+\lambda^{\frac12}+|b|^{2-\gamma}) .
\end{equation*}
Combining these estimates with \eqref{mass:W:b} comparing $W$ and $W_b$, yields 
the control of the $L^2$ norm of $\varepsilon$ (recall that~\eqref{wBT} and \eqref{wBTb}
control the size of the parameters in terms of $\delta$).

\smallskip

The conservation of energy implies that 
$\lambda^{-2}(S)E(w(S))=\lambda^{-2}(s)E(w(s)$. Thus, by~\eqref{w:0}
\begin{equation*}
 \frac{E(W(S))}{\lambda^2(S)} = \frac{E(w(s))}{\lambda^2(s)}=
 \frac{E(W_b(s)+\varepsilon(s))}{\lambda^2(s)} .
\end{equation*}
Using the identity $\int(Q''+Q^5)\varepsilon=\int Q\varepsilon=0$,
\begin{equation*}
\begin{split}
E(W_b+\varepsilon)&=E(W_b)+\int (\partial_yV+b \partial_y P_b)(\partial_y\varepsilon) +\frac12 \int (\partial_y\varepsilon)^2 \\ & \quad -\int (W_b^5-Q^5)\varepsilon -\frac16 \int \left((W_b+\varepsilon)^6-W_b^6-6W_b^5\varepsilon \right) .
\end{split}
\end{equation*}
Using the Cauchy-Schwarz inequality and then \eqref{est:V0},
\begin{equation*}
 \left|\int (\partial_yV)(\partial_y\varepsilon) \right|
 \leq \|\partial_y V\|_{L^2}\|\partial_y\varepsilon\|_{L^2}
 \leq \frac18 \|\partial_y\varepsilon\|_{L^2}^2 + c \delta^{-1}\lambda^2.
\end{equation*}
Next, the Cauchy-Schwarz inequality and \eqref{def:P:b} yield 
\begin{equation*}
 \left|\int b(\partial_y P_b)(\partial_y\varepsilon) \right| \lesssim |b|\|\partial_y\varepsilon\|_{L^2}
 \le \frac18 \|\partial_y\varepsilon\|_{L^2}^2+c|b|^2 .
\end{equation*}
We observe from \eqref{est:V0}, \eqref{def:P:b},
\begin{align*}
\left| \int (W_b^5-Q^5) \varepsilon \right|
&\lesssim \int Q^4 (|V|+|b||P_b|)|\varepsilon|+\int (|V|^5+|b|^5|P_b|^5)|\varepsilon|\\
& \lesssim (\lambda^{\frac12}+|b|)\|\varepsilon\|_{L^2_\loc}+(\lambda^2\sqrt{\delta}+|b|^{5-\frac{\gamma}2})\|\varepsilon\|_{L^2}
\lesssim (\lambda^{\frac12}+|b|)\|\varepsilon\|_{L^2_\loc} + \delta\lambda^2 + |b|^2
\end{align*}
and from \eqref{est:W0:b} and the Gagliardo-Nirenberg inequality \eqref{sharpGN},
\begin{align*}
\left| \int \left((W_b+\varepsilon)^6-W_b^6-6W_b^5\varepsilon \right)\right|
&\lesssim \int \left( W_b^4\varepsilon^2+\varepsilon^6 \right)\\
&\lesssim \|\varepsilon\|_{L^2_\loc}^2+(\lambda^2+|b|^4)\|\varepsilon\|_{L^2}^2+\|\varepsilon\|_{L^2}^4\|\partial_y \varepsilon\|_{L^2}^2\\
&\lesssim \|\varepsilon\|_{L^2_\loc}^2+ \delta \lambda^2 + |b|^4 + \delta^2 \|\partial_y \varepsilon\|_{L^2}^2.
\end{align*}
Gathering these estimates, for $\delta$ small enough, we deduce
\begin{align*}
 \|\partial_y \varepsilon\|_{L^2}^2 &\lesssim 
 \|\varepsilon\|_{L^2_\loc}^2+\lambda^{\frac12}\|\varepsilon\|_{L^2_\loc}+\delta^{-1}\lambda^2+b^2 +\lambda^2 \left| \frac{E(W_b)}{\lambda^2}-\frac{E(W(S))}{\lambda^2(S)}\right| .
\end{align*}
By \eqref{energy:W:b} and then \eqref{wBT} we have (recall $\gamma = 3/4$)
\[
|E(W_b)-E(W)|\lesssim |b| + |b|^{\frac 14}\lambda^{\frac 52}\lesssim |b| + \lambda^{\frac {10}3}
\lesssim |b| +\delta \lambda^2.
\]
From \eqref{est:deriv:energy:W}, \eqref{cons:BS.1}, \eqref{eL2B}, \eqref{BS} and \eqref{BS2}, we have
\[
\left| \frac{d}{dt} \left[ \frac{E(W)}{\lambda^2}\right]\right| \lesssim C_2^\star |s|^{-2}\log|s|.
\]
Thus, by integration,
\begin{equation}\label{est:asymp:energy:W}
\left| \frac{E(W(s))}{\lambda^2(s)}-\frac{E(W(S))}{\lambda^2(S)}\right| \lesssim  C_2^\star |s|^{-1}\log|s| \lesssim 1.
\end{equation}
This yields, using also \eqref{eL2B}, \eqref{BS} and \eqref{BS2},
\[
\|\partial_y\varepsilon\|_{L^2}^2 
\lesssim \delta^{-1}\lambda^2 + \|\varepsilon\|_{L^2_\loc}^2+\lambda^{\frac12}\|\varepsilon\|_{L^2_\loc}+|b|
\lesssim \delta^{-1} s^{-4}.
\]
Last, the estimate $\|\varepsilon\|_{L^{\infty}}^2\lesssim \|\varepsilon\|_{L^2}\|\partial_y\varepsilon\|_{L^2}$
completes the proof of~\eqref{GN:BS}.
\end{proof}

\section{Energy estimates}\label{Sec:5}

To complete the proof of Proposition \ref{prop:bootstrap},
we improve the bootstrap estimate~\eqref{BS2} using the following variant
of the virial-energy functional introduced in~\cite{MaMeRa1}
\[
\cF = \frac{1}{\lambda^2}
\left[ \int (\partial_y\varepsilon)^2 \psi_B + \int \varepsilon^2 \varphi_B
- \frac13 \int \left( (W_b+\varepsilon)^6 - W_b^6 - 6 W_b ^5 \varepsilon\right) \psi_B\right].
\]

\subsection{Virial-energy estimates}

\begin{proposition}\label{pr:local-energy}
There exist $\mu_1,\mu_2>0$ and $B>100$ such that the following hold on $\cI$.
\begin{enumerate}
\item \emph{Time variation of $\cF$.} 
\begin{equation}\label{time:local}
\frac{d\cF}{ds} + \frac{\mu_1} {\lambda^2} \int \left( (\partial_y\varepsilon)^2 + \varepsilon^2\right)\varphi_B'
\lesssimD C_2^{\star}|s|^{-7}(\log|s|)^2 .
\end{equation}
\item \emph{Coercivity of $\cF$.} 
\begin{equation}\label{coer:local}
\cF \geq \frac{\mu_2}{\lambda^2} \left[{\cN}_B(\varepsilon)\right]^2 .
\end{equation}
\end{enumerate}
\end{proposition}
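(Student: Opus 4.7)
The plan is to prove the two parts of Proposition~\ref{pr:local-energy} separately, following the virial-energy strategy of \cite{MaMeRa1} adapted to the refined ansatz $W_b$.

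For the coercivity (ii), I observe that the quadratic part of $\lambda^2\cF$ in $\varepsilon$ is
\[
\int (\partial_y\varepsilon)^2 \psi_B + \int \varepsilon^2 \varphi_B - 5 \int W_b^4 \varepsilon^2 \psi_B,
\]
while the cubic and higher contributions to $-\tfrac13\int((W_b+\varepsilon)^6-W_b^6-6W_b^5\varepsilon)\psi_B$ are controlled by $\|\varepsilon\|_{L^\infty}\int\varepsilon^2\varphi_B$ via $\psi_B\lesssim\varphi_B$ together with a Gagliardo--Nirenberg interpolation and the smallness $\|\varepsilon\|_{L^\infty}\lesssim |s|^{-1}$ from Lemma~\ref{le:mass-ener}, so they are negligible for $|s_0|$ large. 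Writing $W_b^4 = Q^4 + O(\lambda^{1/2}+|b|)$ via \eqref{est:W0:b}, the quadratic part approaches the localized form $(\cL\varepsilon,\varepsilon)_{\psi_B,\varphi_B}$, with the discrepancy again bounded by $(\lambda^{1/2}+|b|)\int\varepsilon^2\varphi_B$. The three orthogonality relations \eqref{ortho} then allow the coercivity estimate (iii) of Lemma~\ref{prop:L} to be transported to the weighted setting, provided $B$ is chosen large enough that the commutator errors generated by $\psi_B',\psi_B''$ are absorbed by the principal term. This yields $\cF\geq \mu_2\lambda^{-2}\cN_B(\varepsilon)^2$.

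For the time variation (i), I would differentiate $\cF$ in $s$ and substitute the equation \eqref{eq:eps.2} for $\partial_s\varepsilon$. The leading contribution comes from the term $-\partial_y\cL\varepsilon$ inserted into the functional derivative of the bracket: integrations by parts against the cutoffs generate the standard virial quadratic form which, using the properties \eqref{cut:onR} of $\psi_B,\varphi_B$ and choosing $B>100$ large enough, produces the dissipation
\[
\frac{\mu_1}{\lambda^2}\int\bigl((\partial_y\varepsilon)^2+\varepsilon^2\bigr)\varphi_B'.
\]
The remaining contributions I would control as follows. The modulation terms $\vec m\cdot\vec M(Q+\varepsilon)$ are estimated by $|\vec m|\,\cN_B(\varepsilon)$ via Cauchy--Schwarz on the exponentially weighted inner product, and then \eqref{est:lambda:sigma}, \eqref{eL2B}, \eqref{BS}--\eqref{BS2} give $|\vec m|\lesssimD \cN_B(\varepsilon)+C_1^\star|s|^{-6}\log|s|$, so the cross term is absorbed half into the dissipation and half into the right-hand side. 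The nonlinear remainders $\partial_y\cR_1,\partial_y\cR_2$ use $|\varepsilon|\lesssim |s|^{-1}$ and $|W_b^4-Q^4|\lesssim\lambda^{1/2}+|b|$. The error terms $\Psi_M,\Psi_W,\Psi_b$ are paired against a function whose weighted $L^2$ norm is of order $\cN_B(\varepsilon)$, and using \eqref{est:Psi_W}, \eqref{est:Psi0:b} together with the bootstrap bounds one obtains a contribution $\lesssimD C_2^\star|s|^{-7}(\log|s|)^2$. Finally, differentiating the prefactor $\lambda^{-2}$ gives $-2(\lambda_s/\lambda)\cF\sim 4\lambda^{1/2}\cF$, which is of order $|s|^{-11}(\log|s|)^2$ by \eqref{coer:local} and the bootstrap, well inside the target.

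The main obstacle is the precise arithmetic of the virial coercivity: one must verify that after collecting all commutator terms produced by the weights $\psi_B,\varphi_B$ and all subprincipal contributions coming from $W_b\neq Q$, a strictly positive fraction of $\int((\partial_y\varepsilon)^2+\varepsilon^2)\varphi_B'$ survives. This relies crucially on the three orthogonality conditions \eqref{ortho}, on the presence of the refined correction $bP_b$ (which provides the extra modulation direction needed to suppress the dangerous resonant contribution along $\Lambda Q$ that would otherwise obstruct the sign), and on the freedom to take $B$ large independently of $\delta$. Once this coercivity is secured, the rest of the estimate is essentially an accounting of the error sources already quantified in Sections~2 and~3, with the worst term being the $|\log\lambda|\lambda^3\lesssim |s|^{-6}\log|s|$ coming from $\Psi_W$, multiplied by $\cN_B(\varepsilon)\lesssim |s|^{-5}\log|s|$, which produces exactly the stated $|s|^{-7}(\log|s|)^2$ bound.
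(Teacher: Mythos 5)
Your overall strategy (differentiate $\cF$, substitute the $\varepsilon$-equation, extract the virial dissipation via Lemma~\ref{le:virial}, and bound the remainders with the bootstrap) is the same as the paper's, and your treatment of the coercivity (ii) matches the paper's decomposition $\lambda^2\cF=\cF_1+\cF_2$. But there are two concrete gaps in your accounting for part (i), both at places where a crude absolute-value bound is exactly at or above the critical size.

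First, the modulation term $2\int(\vec m\cdot\vec M Q)\,G_B(\varepsilon)$ cannot be handled by ``Cauchy--Schwarz giving $|\vec m|\,\cN_B(\varepsilon)$, absorbed half into the dissipation.'' Since $|\vec m|\gtrsim\|\varepsilon\|_{L^2_\loc}$ in general, this bound produces a term of size $\cN_B(\varepsilon)^2$ (or at best $\|\varepsilon\|_{L^2_\loc}^2$ with an $O(1)$ constant). Neither can be absorbed: the dissipation only controls $\int(\varepsilon^2+(\partial_y\varepsilon)^2)\varphi_B'$, and $\|\varepsilon\|_{L^2_\loc}^2\lesssim B\int\varepsilon^2\varphi_B'$ loses a factor $B$ against the fixed constant $\mu_1$, while $\cN_B(\varepsilon)^2/\lambda^2\sim(C_2^\star)^2|s|^{-6}(\log|s|)^2$ overshoots the target $C_2^\star|s|^{-7}(\log|s|)^2$ and carries the wrong power of $C_2^\star$. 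The paper's estimate of $f_{2,1}$ integrates by parts using $\cL\Lambda Q=-2Q$, $\cL Q'=0$ and then invokes \emph{all three} orthogonality relations \eqref{ortho} (including $(\varepsilon,y\Lambda Q)=0$) to replace $\psi_B$ by $\psi_B-1$ and $\varphi_B-\psi_B$ by $\varphi_B-\psi_B-\tfrac yB$, which localizes the integrals to $|y|>B/2$ and yields the decisive factor $e^{-B/8}$ beating the factor $B$ from \eqref{eL2B}. This is the step for which the orthogonality conditions were designed, and your proposal only invokes them for the coercivity statements.

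Second, your estimate of the prefactor term is wrong: with $\cF\sim\lambda^{-2}\cN_B(\varepsilon)^2$ one gets $|2\tfrac{\lambda_s}{\lambda}\cF|\sim(C_2^\star)^2|s|^{-7}(\log|s|)^2$, not $|s|^{-11}(\log|s|)^2$ (you dropped the $\lambda^{-2}\sim|s|^4$), and this is precisely at the threshold with $(C_2^\star)^2$ in place of $C_2^\star$, so it would not close the bootstrap. The paper's $f_4$ combines this term with the $\tfrac{\lambda_s}{\lambda}\Lambda\varepsilon$ contribution and uses the \emph{sign} $\tfrac{\lambda_s}{\lambda}\geq 2|s|^{-1}-c|s|^{-2}\geq 0$ (a consequence of $c_1=-2$ and \eqref{cons:BS.1}) to discard the dangerous part $-2\tfrac{\lambda_s}{\lambda}\int\varphi_B\varepsilon^2\leq 0$, controlling only the residual weighted integrals over $y<0$ by interpolation. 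Without these two ingredients the scheme you outline does not produce the stated bound.
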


\begin{proof}[Proof of Proposition~\ref{pr:local-energy}]
Proof of \eqref{time:local}. By one integration by parts and then \eqref{eq:eps}
\begin{align*}
\lambda^2 \frac{d\cF}{ds}& = 
2 \int \partial_s\varepsilon \left(-\psi_B' \partial_y\varepsilon - \psi_B \partial_y^2\varepsilon + \varepsilon \varphi_B
- \psi_B \left[ (W_b+\varepsilon)^5 - W_b^5\right] \right) \\
&\quad - 2 \frac{\lambda_s}{\lambda}\lambda^2 \cF
- 2 \int (\partial_s W_b) \left[ (W_b+\varepsilon)^5 - W_b^5 - 5 W_b^4 \varepsilon \right] \psi_B\\
& = f_1+f_2+f_3+f_4+f_5
\end{align*}
where
\begin{align*}
f_1 & = 2 \int \partial_y\left(-\partial_y^2\varepsilon + \varepsilon - \left[ (W_b+\varepsilon)^5 - W_b^5\right]\right) G_B(\varepsilon)\\
f_2 & = - 2 \int \cE_b(W) G_B(\varepsilon) \\
f_3 & = 2 \left(\frac{\sigma_s}{\lambda} - 1\right) \int (\partial_y\varepsilon) G_B(\varepsilon) \\
f_4 & = 2 \frac{\lambda_s}{\lambda} \int \left((\Lambda \varepsilon) G_B(\varepsilon)
-(\partial_y\varepsilon)^2 \psi_B - \varepsilon^2 \varphi_B
+ \frac 13 \left[ (W_b+\varepsilon)^6 - W_b^6 - 6 W_b^5 \varepsilon\right]\psi_B \right)\\
f_5 & = - 2 \int (\partial_s W_b) \left[ (W_b+\varepsilon)^5 - W_b^5 - 5 W_b^4 \varepsilon \right] \psi_B
\end{align*}
and
\[
G_B(\varepsilon) = - \partial_y(\psi_B \partial_y\varepsilon) + \varepsilon \varphi_B
- \psi_B \left[ (W_b+\varepsilon)^5 - W_b^5\right].
\]

\smallskip

\noindent \emph{Estimate for $f_1$.} We claim that there exist $\mu_1>0$, $B_0 \ge 100$, such that for all $B \ge B_0$,
\begin{equation} \label{est:f1}
 f_{1} +2\mu_1 \int \left(\varepsilon^2+(\partial_y\varepsilon)^2\right)
 \varphi_B' \lesssimD |s|^{-13} .
\end{equation}

Following the computations page 89 in \cite{MaMeRa1}
\begin{align*}
f_1 & = - \int \left[ 3 (\partial_y^2 \varepsilon)^2 \psi_B'
+(3 \varphi_B'+\psi_B'-\psi_B''') (\partial_y \varepsilon)^2
+ (\varphi_B'-\varphi_B''') \varepsilon^2 \right]\\
&\quad - \frac 13 \int \left[ (W_b+\varepsilon)^6 - W_b^6 - 6 (W_b+\varepsilon)^5 \varepsilon \right] (\varphi_B'-\psi_B')\\
&\quad + 2 \int\left[ (W_b+\varepsilon)^5 - W_b^5 - 5 W_b^4 \varepsilon\right] (\partial_yW_b) (\psi_B-\varphi_B)\\
&\quad + 10 \int \psi_B' (\partial_y \varepsilon) \left[ (\partial_y W_b)\left( (W_b+\varepsilon)^4 - W_b^4\right)
+ (W_b+\varepsilon)^4 (\partial_y \varepsilon)\right]\\
&\quad - \int \psi_B' \left[\left(-\partial_y^2\varepsilon+ \varepsilon- \left( (W_b+\varepsilon)^5 -W_b^5\right)\right)^2
- (-\partial_y^2 \varepsilon + \varepsilon)^2 \right].
\end{align*}
We decompose $f_1  =:f_1^{<}+f_1^{\sim}+f_1^{>}$
where $f_1^{<}$, $f_1^{\sim}$ and $f_1^{>}$ correspond to the integration over the regions $y<-\frac{B}2$, $|y|\le \frac{B}2$ and $y>\frac{B}2$ respectively. 

We first estimate $f_1^{<}$. By using the properties of $\varphi_B$ and $\psi_B$ in \eqref{cut:left}-\eqref{cut:onR} and choosing $B \ge 100$ large enough, we have 
\begin{align*}
 f_1^{<}+3\int_{y<-\frac{B}2} (\partial_y^2\varepsilon)^2& \psi_B'+\frac12 \int_{y<-\frac{B}2} \left( \varepsilon^2+(\partial_y\varepsilon)^2 \right) \varphi_B'
 \\ & \lesssim \int_{y<-\frac{B}2} \left(W_b^4\varepsilon^2+\varepsilon^6\right) \varphi_B'
 +B\int |\partial_yW_b| \left( |W_b|^3\varepsilon^2+|\varepsilon|^5\right) \varphi_B'\\ & 
 \quad +\int_{y<-\frac{B}2} |\partial_y\varepsilon| \left(|\partial_yW_b| \left(|W_b|^3|\varepsilon|+|\varepsilon|^4\right)+|\partial_y\varepsilon|\left( |W_b|^4+\varepsilon^4\right)\right)\varphi_B' \\ 
 &\quad +\int_{y<-\frac{B}2} \left| -2\left(-\partial_y^2\varepsilon+\varepsilon \right)+ \left( (W_b+\varepsilon)^5-W_b^5\right) \right| \left|(W_b+\varepsilon)^5-W_b^5 \right|\psi_B' \\ 
 &=:f_{1,1}^{<}+f_{1,2}^{<}+f_{1,3}^{<}+f_{1,4}^{<} .
\end{align*}
It follows from \eqref{est:W0:b}, then \eqref{cut:left}-\eqref{cut:onR} and \eqref{GN:BS}
\begin{equation*}
f_{1,1}^{<} \lesssim \int_{y<-\frac{B}2}\varepsilon^2 \omega \varphi_B'+\left(|s|^{-4}+\|\varepsilon\|_{L^{\infty}}^4\right)\int_{y<-\frac{B}2} \varepsilon^2 \varphi_B' \lesssim \left(e^{-\frac{B}4}+|s|^{-4}\right)\int_{y<-\frac{B}2} \varepsilon^2 \varphi_B' .
\end{equation*}
Similarly, \eqref{est:W0:b}, \eqref{est:W12:b}, then \eqref{cut:left}-\eqref{cut:onR} and \eqref{GN:BS} yield 
\begin{equation*}
f_{1,2}^{<} \lesssim B \int_{y<-\frac{B}2}\varepsilon^2 \omega \varphi_B'
+B\left(|s|^{-6}+|s|^{-3}\|\varepsilon\|_{L^{\infty}}^3\right)\int_{y<-\frac{B}2} \varepsilon^2 \varphi_B'
\lesssim \left(e^{-\frac{B}8}+|s|^{-6}\right)\int_{y<-\frac{B}2} \varepsilon^2 \varphi_B' ,
\end{equation*}
\begin{align*}
 f_{1,3}^{<} &\lesssim \int_{y<-\frac{B}2} |\partial_yW_b| \left(|W_b|^3+|\varepsilon|^3\right) \left(\varepsilon^2+(\partial_y\varepsilon)^2\right)\varphi_B'+\int_{y<-\frac{B}2} (\partial_y\varepsilon)^2\left(|W_b|^4+\varepsilon^4 \right)\varphi_B' \\ 
 & \lesssim \left(e^{-\frac{B}{4}}+|s|^{-4}\right) \int_{y<-\frac{B}2}\left(\varepsilon^2+(\partial_y\varepsilon)^2\right)\varphi_B'
\end{align*}
and 
\begin{align*}
 f_{1,4}^{<} 
 & \lesssim \int_{y<-\frac{B}2} \left(|W_b|^4+\varepsilon^4\right) \left(\varepsilon^2+(\partial_y^2\varepsilon)^2\right) \psi_B'+
 \int_{y<-\frac{B}2}\left(|W_b|^8+\varepsilon^8\right)\varepsilon^2\varphi_B'
 \\
 & \lesssim \left(e^{-\frac{B}{4}}+|s|^{-4}\right) \int_{y<-\frac{B}2} \left((\partial_y^2\varepsilon)^2\psi_B'+
\varepsilon^2\varphi_B' \right).
\end{align*}
Gathering these estimates and taking $B>100$ and $|s|$ large enough, we deduce that 
\begin{equation} \label{est:f1<}
f_1^{<}+\int_{y<-\frac{B}2} (\partial_y^2\varepsilon)^2 \psi_B'+\frac14 \int_{y<-\frac{B}2} \left( \varepsilon^2+(\partial_y\varepsilon)^2 \right) \varphi_B' \le 0 .\end{equation}

Since $\psi_B' \equiv 0$ on $y>\frac{B}2$, we observe using also \eqref{cut:onR} that, for $B>100$ large enough, 
\begin{align*}
f_1^> +\frac12 \int_{y>\frac{B}2} \left( \varepsilon^2+(\partial_y\varepsilon)^2\right) \varphi_B' 
\lesssim \int_{y>\frac{B}2}\left(|W_b|^4+\varepsilon^4 \right) \varepsilon^2 \varphi_B' + \int_{y>\frac{B}2}|\partial_yW_b|\left( |W_b|^3+|\varepsilon|^3 \right)\varepsilon^2 \varphi_B.
\end{align*}
Using \eqref{est:W0:b}, \eqref{est:W12:b} and \eqref{GN:BS}, it follows that
\begin{equation*}
 f_{1}^{>} \lesssim \int_{y>\frac{B}2} \varepsilon^2 \omega 
 +|s|^{-4}\int_{y>\frac{B}2} \varepsilon^2 \varphi_B
 \lesssim e^{-\frac{B}8} \int_{y>\frac{B}2} \varepsilon^2 \varphi_B'+|s|^{-4}[\cN_B(\varepsilon)]^2.
\end{equation*}
Thus, using \eqref{BS2}, for $B$ and $|s|$ large enough,
we obtain
\begin{equation} \label{est:f1>}
f_1^{>}+\frac14 \int_{y>\frac{B}2} \left( \varepsilon^2+(\partial_y\varepsilon)^2\right) \varphi_B' \lesssim |s|^{-13} .
\end{equation}
 
In the region $|y|<\frac{B}2$, using $\varphi_B(y)=1+\frac{y}B$ and $\psi_B(y)=1$,
\begin{align*} 
f_1^{\sim}&=-\frac1B \int_{|y|<\frac{B}2} \left(3(\partial_y\varepsilon)^2+\varepsilon^2+\frac13\left[ (W_b+\varepsilon)^6 - W_b^6 - 6 (W_b+\varepsilon)^5 \varepsilon \right]\right) \\
&\quad -\frac2B \int_{|y|<\frac{B}2}\left[ (W_b+\varepsilon)^5 - W_b^5 - 5 W_b^4 \varepsilon\right] y(\partial_yW_b).
\end{align*}
We decompose $f_1^{\sim}$ as
\begin{align*} 
f_1^{\sim}
&=-\frac1B \int_{|y|<\frac{B}2}\left(3(\partial_y\varepsilon)^2+\varepsilon^2-5Q^4\varepsilon^2+20yQ^3Q'\varepsilon^2\right) \\ 
&\quad -\frac1{3B}\int_{|y|<\frac{B}2}\left[ (W_b+\varepsilon)^6 - W_b^6 - 6 (W_b+\varepsilon)^5 \varepsilon + 15Q^4\varepsilon^2\right] \\ & 
\quad -\frac2B\int_{|y|<\frac{B}2}\left[ (W_b+\varepsilon)^5 - W_b^5 - 5 W_b^4 \varepsilon-10W_b^3\varepsilon^2\right] y(\partial_yW_b) \\ 
& \quad -\frac{20}B \int_{|y|<\frac{B}2} y\left( W_b^3\partial_yW_b-Q^3Q'\right) \varepsilon^2 \\ 
&=:f_{1,1}^{\sim}+f_{1,2}^{\sim}+f_{1,3}^{\sim}+f_{1,4}^{\sim}.
\end{align*}

To control the main quadratic term $f_{1,1}^{\sim}$, we rely on the virial-type estimate proved in \cite{MaMeRa1} (see also \cite[Proposition 4]{MaMejmpa}).
\begin{lemma}[{\cite[Lemma 3.4]{MaMeRa1}}]\label{le:virial}
There exists $B_0>100$ and $\mu_0>0$ such that, for all $B\ge B_0$,
\[
\int_{|y|<\frac B2} \left[ 3 (\partial_y\varepsilon)^2 + \varepsilon^2 - 5 Q^4 \varepsilon^2 + 20 y Q'Q^3 \varepsilon^2\right]
\geq \mu_0 \int_{|y|< \frac B2} \left(\varepsilon^2+(\partial_y\varepsilon)^2\right) - \frac 1B \int \varepsilon^2 e^{-\frac {|y|}2}.
\]
\end{lemma}
Hence, 
\begin{equation*}
 f_{1,1}^{\sim} +\mu_0 \int_{|y|<\frac{B}2}\left(\varepsilon^2+(\partial_y\varepsilon)^2\right)
 \varphi_B' \lesssim \frac1{B} \int \varepsilon^2\varphi_B' .
\end{equation*}
To estimate $f_{1,2}^{\sim}$ and $f_{1,4}^\sim$, we need the following estimate for $k=0,1$,
\begin{equation} \label{cons:BS:Wb3Wb'}
 \left|\partial_y^k (W_b^4-Q^4)\right|
 \lesssimD |s|^{-1}\omega +|s|^{-4-2k}\ONE_{[-2\delta,2\delta]}(\lambda y)+|s|^{-20}(\log|s|)^4\ONE_{[-2,0]}(|b|^{\gamma}y).
\end{equation}
Indeed, for $k=0$, we observe that
\begin{equation*}
 \left| W_b^4-Q^4\right|\lesssim \left| W^4- Q^4 \right| +\left|W_b^4- W^4 \right| 
 \lesssim Q^3|V|+V^4+|b||P_b||W|^3+|b|^4|P_b|^4.
\end{equation*}
For $k=1$, we expand 
\begin{equation*}
 W_b^3\partial_yW_b-Q^3Q'=(W^3-Q^3+3bW^2P_b+3b^2WP_b^2+b^3P_b^3)\partial_y W_b+Q^3\partial_y(V+bP_b)
\end{equation*}
which implies that
\begin{equation*}
 \left|W_b^3\partial_yW_b-Q^3Q'\right| \lesssim \left(Q^2|V|+|V|^3+|b|W^2|P_b|+|b|^3|P_b|^3 \right)|\partial_yW_b|+Q^3\left(|\partial_yV|+|b||\partial_yP_b|\right).
\end{equation*}
Thus, \eqref{cons:BS:Wb3Wb'} follows from \eqref{est:W0}, \eqref{est:V0}, \eqref{def:P:b} and \eqref{BS}.

\smallskip

We decompose 
\begin{align*}
(&W_b+\varepsilon)^6 - W_b^6 - 6 (W_b+\varepsilon)^5 \varepsilon +15Q^4\varepsilon^2 \\ &=(W_b+\varepsilon)^6 - W_b^6-6W_b^5\varepsilon-15W_b^4\varepsilon^2
-6\left[(W_b+\varepsilon)^5-W_b^5-5W_b^4\varepsilon \right] \varepsilon
-15\left[W_b^4-Q^4\right]\varepsilon^2 ,
\end{align*}
so that 
\begin{align*}
\left| (W_b+\varepsilon)^6 - W_b^6 - 6 (W_b+\varepsilon)^5 \varepsilon +15Q^4\varepsilon^2 \right| \lesssim \left(|W_b|^3|\varepsilon|+\varepsilon^4 \right)\varepsilon^2+\left|W_b^4-Q^4\right|\varepsilon^2 .
\end{align*}
Thus, it follows from \eqref{est:W0:b}, \eqref{GN:BS} and \eqref{cons:BS:Wb3Wb'} that 
\begin{equation*}
 |f_{1,2}^{\sim}| \lesssim \left( \|W_b\|_{L^{\infty}}^3\|\varepsilon\|_{L^{\infty}}+\|\varepsilon\|_{L^{\infty}}^4+\|W_b^4-Q^4\|_{L^{\infty}} \right) \frac1B\int_{|y|<\frac{B}2}\varepsilon^2\lesssim |s|^{-1}\int_{|y|<\frac{B}2} \varepsilon^2 \varphi_B' .
\end{equation*}
Similarly, \eqref{est:W0:b}, \eqref{est:W12:b} and \eqref{GN:BS} yield 
\begin{align*}
|f_{1,3}^{\sim}| &\lesssim \frac1B \int_{|y|<\frac{B}2} \left(W_b^2|\varepsilon|^3+|\varepsilon|^5 \right) |y||\partial_yW_b|\\ &
\lesssim \left( \|W_b\|_{L^{\infty}}^2\|\varepsilon\|_{L^{\infty}}+\|\varepsilon\|_{L^{\infty}}^3 \right)\|\partial_yW_b\|_{L^{\infty}} \int_{|y|<\frac{B}2}\varepsilon^2 \lesssimD |s|^{-1}B \int_{|y|<\frac{B}2} \varepsilon^2\varphi_B'.
\end{align*}
Finally, \eqref{cons:BS:Wb3Wb'} implies that
\begin{equation*}
 |f_{1,4}^{\sim}| \lesssim \left\| W_b^3\partial_yW_b-Q^3Q'\right\|_{L^{\infty}} \int_{|y|<\frac{B}2} \varepsilon^2
 \lesssimD |s|^{-1}B\int_{|y|<\frac{B}2} \varepsilon^2 \varphi_B'.
\end{equation*}
Hence, by gathering these estimates and taking $|s|$ large enough,
\begin{equation} \label{est:f1sim}
 f_{1}^{\sim} +\frac{\mu_0}2 \int_{|y|<\frac{B}2}\left(\varepsilon^2+(\partial_y\varepsilon)^2\right)\varphi_B'
 \lesssimD \frac1{B} \int \varepsilon^2\varphi_B' .
\end{equation}

The proof of \eqref{est:f1} follows by combining \eqref{est:f1<}, \eqref{est:f1>}, \eqref{est:f1sim} and by fixing $2^{-4}\min\{1,\mu_0\}$, $B\ge B_0$ large enough and taking $|s|$ large enough possibly depending on $B$.

\smallskip

\noindent \emph{Estimate for $f_2$.} We claim
\begin{equation} \label{est:f2}
|f_2| \le \frac{\mu_1}2 \int \varepsilon^2\varphi_B'+ c_\delta C_2^{\star}|s|^{-11}(\log|s|)^2 .
\end{equation}

Using the expression of $\cE_b(W)$ in \eqref{def:EbW},
\begin{align*}
f_2 &= 
2 \int \left(\vec{m} \cdot \vec{M}Q \right)G_B(\varepsilon) -2 \int \left( \Psi_M + \Psi_{W} + \Psi_b\right) G_B(\varepsilon)=: f_{2,1}+f_{2,2}.
\end{align*}
We deal first with $f_{2,1}$.
Integrating by parts and using $\cL\Lambda Q = -2Q$, $\cL Q'=0$, we compute
\begin{align*}
\int \Lambda Q \, G_B(\varepsilon)
&= - 2 \int \psi_B Q\varepsilon -\int \psi_B'(\Lambda Q)' \varepsilon +\int (\varphi_B-\psi_B)\Lambda Q \, \varepsilon
\\ & \quad -\int \Lambda Q \psi_B \left[ (W_b+\varepsilon)^5-W_b^5-5W_b^4\varepsilon\right] -5\int \Lambda Q \psi_B \left[W_b^4-Q^4\right]\varepsilon
\end{align*}
and
\begin{align*}
\int Q' \, G_B(\varepsilon)
&= -\int \psi_B'Q'' \varepsilon +\int (\varphi_B-\psi_B)Q' \varepsilon
\\ & \quad -\int Q' \psi_B \left[ (W_b+\varepsilon)^5-W_b^5-5W_b^4\varepsilon\right]-5\int Q' \psi_B \left[ W_b^4-Q^4\right]\varepsilon.
\end{align*}
Note that by the orthogonality relations \eqref{ortho}, we also have $\int yQ' \varepsilon=\int ( \Lambda Q-\frac12Q) \varepsilon=0$.
Using the definitions of $\psi_B$, $\psi_B'$ and $\varphi_B$, it follows that 
\begin{align*}
&\left|\int \psi_B Q\varepsilon \right|=\left| \int (\psi_B-1) Q \varepsilon\right| \lesssim \int_{y<-\frac{B}2} e^{-\frac{|y|}2} |\varepsilon| \lesssim e^{-\frac{B}8} \|\varepsilon\|_{L^2_\loc} , \\
&\left|\int \psi_B' Q'' \varepsilon \right|+\left|\int \psi_B'(\Lambda Q)' \varepsilon \right| \lesssim \int_{y<-\frac{B}2} e^{-\frac{|y|}2} |\varepsilon| \lesssim e^{-\frac{B}8} \|\varepsilon\|_{L^2_\loc} ,
\end{align*}
and
\begin{align*}
&\left|\int (\varphi_B-\psi_B)\Lambda Q \, \varepsilon \right|=2\left| \int(\varphi_B-\psi_B-\frac{y}B)\Lambda Q \, \varepsilon\right| \lesssim \int_{|y|>\frac{B}2} e^{-\frac{|y|}2}|\varepsilon| \lesssim e^{-\frac{B}8} \|\varepsilon\|_{L^2_\loc} , \\ 
&\left|\int (\varphi_B-\psi_B) Q' \varepsilon \right|=\left|\int (\varphi_B-\psi_B-\frac{y}B)Q' \varepsilon \right|
\lesssim \int_{|y|>\frac{B}2} e^{-\frac{|y|}2}|\varepsilon| \lesssim e^{-\frac{B}8} \|\varepsilon\|_{L^2_\loc} .
\end{align*}
Next, we get from \eqref{cons:BS:Wb3Wb'} that 
\begin{equation*}
\int \psi_B\left(|\Lambda Q|+|Q'|\right) \left|W_b^4-Q^4\right||\varepsilon|
\lesssimD |s|^{-1}\int e^{-\frac{|y|}2}|\varepsilon| \lesssimD |s|^{-1}\|\varepsilon\|_{L^2_\loc} .
\end{equation*}
Finally, we deduce from \eqref{est:W0:b} and \eqref{GN:BS}
that 
\begin{align*}
&\int \psi_B\left(|\Lambda Q|+|Q'|\right)\left|(W_b+\varepsilon)^5-W_b^5-5W_b^4\varepsilon\right| 
\\ & \lesssim \int \psi_B e^{-\frac{|y|}2} \left(|W_b|^3\varepsilon^2+|\varepsilon|^5\right)
\lesssim \left(\|W_b\|_{L^{\infty}}^3+\|\varepsilon\|_{L^{\infty}}^3\right)\|\varepsilon\|_{L^2_\loc}^2
\lesssim \|\varepsilon\|_{L^2_\loc}^2.
\end{align*}
Combining these estimates with \eqref{eL2B}, \eqref{BS2}, \eqref{cons:BS.1} and taking $B$ large enough, we obtain
\begin{equation} \label{est:f21}
|f_{2,1}| \le \frac{\mu_1}4 \int \varepsilon^2\varphi_B'+c_\delta |s|^{-12}(\log|s|)^2 .
\end{equation}
We fix $B$ to such a value (independent of $C_2^\star$ and $\delta$) and we do not track anymore this constant.

To estimate $\mathrm{f}_{2,2}$, we need an estimate on the error term $\Psi = \Psi_M + \Psi_{W} + \Psi_b$.
\begin{lemma}\label{BS:lemma}
For all $s\in \cI$, 
$\cN_B(\Psi) \lesssimD |s|^{-1} \|\varepsilon\|_{L^2_\loc} + s^{-6} \log |s|$.
\end{lemma}
\begin{proof}[Proof of Lemma~\ref{BS:lemma}]
First, we observe that \eqref{est:Psi_W} and \eqref{BS} imply 
$\cN_B(\Psi_W)\lesssimD s^{-6} \log |s|$. Second, we also have 
$\cN_B(\Psi_b)\lesssimD s^{-6} \log |s|$ by \eqref{est:Psi0:b} and \eqref{BS}.

Last, we claim that $\cN_B(\Psi_M) \lesssimD |s|^{-1} \|\varepsilon\|_{L^2_\loc} + s^{-6} \log |s|$.
By the definition of $\Psi_M$ in \eqref{def:PsiM},
\begin{align*}
\cN_B(\Psi_M)
&\lesssim \left|\vec m\right| \left( \cN_B(\Psi_{\lambda})+\cN_B(\Psi_{\sigma})+|b|\cN_B(\Lambda P_b) + |b| \cN_B(\partial_y P_b)\right)\\
&\quad + |b_s| \cN_B\left((\chi_b+\gamma y\partial_y\chi_b)P\right)
+|b| \left( \cN_B( \Psi_{\lambda})+|b|\cN_B(\Lambda P_b)\right)\\
&\quad +\cN_B\left( (\widetilde{\beta}-\beta)\left(\Lambda Q + \Psi_{\lambda} + b \Lambda P_b\right)\right).
\end{align*}
From \eqref{cons:BS.1}, \eqref{cons:BS.2} and \eqref{BS}, it follows that $|b|\lesssimD |s|^{-5}\log|s|$,
$\left|\vec m \right|\lesssimD \|\varepsilon\|_{L^2_\loc} + s^{-6} \log |s|$ and
$|b_s|\lesssimD |s|^{-1}\|\varepsilon\|_{L^2_\loc} + s^{-6} \log |s|$.
Next, from \eqref{est:Psi_lambda}-\eqref{est:Psi_sigma}, 
$\cN_B(\Psi_\lambda)+\cN_B(\Psi_\sigma)\lesssim |s|^{-1}$
and from the definition of $P$ and \eqref{Lambda:P:b.1}, $\cN_B(\Lambda P_b)+\cN_B(\partial_y P_b)+\cN_B\left((\chi_b+\gamma y\partial_y\chi_b)P\right)\lesssim 1$.

Finally, from~\eqref{def:theta3}, we have $|\widetilde\beta-\beta|+|\partial_y\beta|\lesssimD |s|^{-1} \ONE_{[\delta,\infty)}(\lambda |y|)$.
By the properties of $\varphi_B$ and~$Q$, \eqref{est:Psi_lambda} and \eqref{Lambda:P:b.1}, it follows that
$\cN_B((\widetilde\beta-\beta)(\Lambda Q + \Psi_{\lambda} + b \Lambda P_b))\lesssimD |s|^{-1}e^{-\frac {\delta}{2B\lambda}}$.
These estimates are sufficient to prove the claim on $\Psi_M$.
\end{proof}

We compute from the definition of $G_B(\varepsilon)$
\begin{equation*} 
\mathrm{f}_{2,2}=2\int \psi_B (\partial_y\Psi) (\partial_y \varepsilon)+2\int \varphi_B \Psi \, \varepsilon
-2\int \psi_B \Psi \left[(W_b+\varepsilon)^5-W_b^5 \right] .
\end{equation*}
We deduce from the Cauchy-Schwarz inequality, \eqref{cut:onR}, Lemma~\ref{BS:lemma} and then~\eqref{BS2} and~\eqref{eL2B} that
\begin{align*}
\left|\int \psi_B (\partial_y\Psi) (\partial_y\varepsilon) \right|
&\leq \cN_B(\Psi) \cN_B(\varepsilon)
\lesssimD \left(|s|^{-1} \|\varepsilon\|_{L^2_\loc}+ |s|^{-6} \log |s| \right) \cN_B(\varepsilon)\\
&\lesssimD 
|s|^{-\frac 12} \|\varepsilon\|_{L^2_\loc}^2
+|s|^{-\frac 32}[\cN_B(\varepsilon)]^2
+C_2^\star |s|^{-11} (\log |s|)^2 \\
&\lesssimD |s|^{-\frac 12} \int \varepsilon^2 \varphi_B'+C_2^\star |s|^{-11} (\log |s|)^2 .
\end{align*}
Similarly,
\begin{equation*}
\left|\int \varphi_B \Psi \varepsilon \right|
\le \cN_B(\Psi) \cN_B(\varepsilon)
\lesssimD |s|^{-\frac 12} \int \varepsilon^2 \varphi_B'+C_2^\star |s|^{-11} (\log |s|)^2.
\end{equation*}
Moreover, since $\psi_B\leq 3 \varphi_B$ and
$\|W_b\|_{L^{\infty}}+\|\varepsilon\|_{L^{\infty}}\lesssim 1$ by
\eqref{est:W0:b}, \eqref{GN:BS}, we check
\begin{equation*}
\left|\int \psi_B \Psi \left[(W_b+\varepsilon)^5-W_b^5 \right] \right|
\lesssim \int \varphi_B |\Psi|\left(|W_b|^4 |\varepsilon|+|\varepsilon|^5 \right)
\lesssim \cN_B(\Psi)\cN_B(\varepsilon).
\end{equation*}
Thus,
\begin{equation} \label{est:f22}
|f_{2,2}| \leq \frac{\mu_1}4 \int \varepsilon^2\varphi_B'+ c_\delta C_2^{\star}|s|^{-11}(\log|s|)^2 ,
\end{equation}
which together with \eqref{est:f21} yields \eqref{est:f2}. 

\smallskip 

\noindent \emph{Estimate for $f_3$.}
Integrating by parts, we compute
\begin{align*}
f_3 =\left(\frac{\sigma_s}{\lambda} - 1\right) 
\int & \biggl[ - \psi_B' (\partial_y\varepsilon)^2 -\varphi_B' \varepsilon^2 
+\frac 13 \psi_B' \left( (W_b+\varepsilon)^6- W_b^6 - 6 W_b^5 \varepsilon\right) \\
&+ 2 \psi_B (\partial_y W_b) \left( (W_b+\varepsilon)^5 - W_b^5 - 5 W_b^4 \varepsilon\right)\biggr]
\end{align*}
By \eqref{cons:BS.1}, \eqref{eL2B} and \eqref{BS2}, we have $|\vec m|\lesssimD C_2^\star |s|^{-5}\log|s|$.
From \eqref{est:W0:b}-\eqref{est:W12:b} and~\eqref{GN:BS}, $\|W_b\|_{L^\infty}+\|\partial_y W_b\|_{L^\infty}+\|\varepsilon\|_{L^\infty}
\lesssim 1$.
Thus, by \eqref{BS2} and the properties of $\varphi_B$, $\psi_B$,
\begin{equation}\label{est:f3}
 |f_3| \lesssim |\vec{m}| [\cN_b(\varepsilon)]^2 \lesssimD (C_2^{\star})^3|s|^{-15}(\log|s|)^3 \lesssim s^{-14}.
\end{equation}

\smallskip

\noindent \emph{Estimate for $f_4$.} We claim 
\begin{equation} \label{est:f4}
f_4 \leq \frac{\mu_1}8\int \left( \varepsilon^2+(\partial_y\varepsilon)^2\right) \varphi_B'+c_\delta |s|^{-14} .
\end{equation}
By integration by parts (see also~\cite[page 97]{MaMeRa1}), we have the identities
\begin{align*} 
\int (\Lambda \varepsilon) \partial_y(\psi_B\partial_y\varepsilon)
&=-\int \psi_B(\partial_y\varepsilon)^2+\frac12\int y\psi_B'(\partial_y\varepsilon)^2 , \\ 
\int (\Lambda \varepsilon) \varepsilon \varphi_B
&=-\frac12 \int y \varphi_B' \varepsilon^2\\
\int (\Lambda \varepsilon) \psi_B \left( (W_b+\varepsilon)^5-(W+F)^5\right)
&=\frac16 \int (2\psi_B-y\psi_B')\left( (W_b+\varepsilon)^6-W_b^6-6W_b^5\varepsilon \right)\\
&\quad 
-\int \psi_B \Lambda W_b\left( (W_b+\varepsilon)^5-W_b^5-5W_b^4\varepsilon \right).
\end{align*}
Hence,
\begin{align*}
f_4
&=-\frac{\lambda_s}{\lambda}\left(\int y\psi_B'(\partial_y\varepsilon)^2+2\int \varphi_B \varepsilon^2+\int y\varphi_B' \varepsilon^2\right)\\
& \quad +\frac13 \frac{\lambda_s}{\lambda}\int y\psi_B' \left( (W_b+\varepsilon)^6-W_b^6-6W_b^5\varepsilon \right)\\
& \quad +2 \frac{\lambda_s}{\lambda}\int \psi_B \Lambda W_b \left( (W_b+\varepsilon)^5-W_b^5-5W_b^4\varepsilon \right)\\
&=:f_{4,1}+f_{4,2}+f_{4,3} .
\end{align*}
First, from \eqref{cons:BS.1}, \eqref{BS}, \eqref{BS2}, and next the definition of $\widetilde \beta$ in \eqref{def:beta_tilde}
(with $c_1=-2$), we have
\[
\frac {\lambda_s}{\lambda} \geq - \widetilde \beta - C_2^\star |s|^{-5}\log|s|\geq 2 |s|^{-1} -c |s|^{-2} \geq 0,
\]
so that using $\psi_B'\geq 0$, $\varphi_B\geq 0$ and $\varphi_B'\geq 0$,
\[
-\frac{\lambda_s}{\lambda}\left(\int_{y>0} y\psi_B'(\partial_y\varepsilon)^2+2\int \varphi_B \varepsilon^2
+\int_{y>0} y\varphi_B' \varepsilon^2\right) \leq 0.
\]
Thus, by $|\frac {\lambda_s}{\lambda}|\lesssim |s|^{-1}$,
the properties of $\psi_B$, $\varphi_B$ and the H\"older and Young inequalities
\begin{align*}
f_{4,1}
&\lesssim |s|^{-1} \int_{y<0} |y| e^{\frac{y}B} \left(\varepsilon^2+(\partial_y\varepsilon)^2\right)\\
&\lesssim |s|^{-1} \left(\int_{y<0} |y|^{100} e^{\frac{y}B}\left( \varepsilon^2+(\partial_y\varepsilon)^2\right)\right)^{\frac 1{100}}
\left( \int_{y<0} \left(\varepsilon^2+(\partial_y\varepsilon)^2\right) \varphi_B'\right)^{\frac{99}{100}}\\
&\lesssimD 
|s|^{-\frac{101}2}
+|s|^{-\frac 12} \int_{y<0} \left( \varepsilon^2+(\partial_y\varepsilon)^2\right) \varphi_B' 
\end{align*}
where we have also used $\int_{y<0} |y|^{100} e^{\frac{y}B}\left( \varepsilon^2+(\partial_y\varepsilon)^2\right)\lesssim \|\varepsilon\|_{H^1}^2 \lesssimD 1$ (see \eqref{BS}).

To deal with $f_{4,2}$, we observe that $\psi_B'\equiv 0$ for $y>-B$. Moreover, note that using \eqref{est:W0:b} and \eqref{GN:BS} 
\[
\left| (W_b+\varepsilon)^6-W_b^6-6W_b^5\varepsilon \right| \lesssim |W_b|^4 \varepsilon^2 + \varepsilon^6
\lesssimD \omega \varepsilon^2 + |s|^{-4} \varepsilon^2.
\]
Then, it follows arguing as for $f_{4,1}$ that 
\begin{align*}
 |f_{4,2}| &\lesssimD |s|^{-1}\|\varepsilon\|_{L^2_{\loc}}^2
 +|s|^{-5}\int_{y<0}B^{-1}|y|e^{\frac{y}B} \varepsilon^2 \\ &
 \lesssimD |s|^{-1} \int \varepsilon^2 \varphi_B'
 +|s|^{-5}\left(\int_{y<0}|y|^{100}e^{\frac{y}B}\varepsilon^2 \right)^{\frac1{100}}\left(\int \varepsilon^2\varphi_B' \right)^{\frac{99}{100}}\\ 
 & \lesssimD |s|^{-401}+|s|^{-1} \int \varepsilon^2 \varphi_B'.
\end{align*}

To deal with $f_{4,3}$, we observe from \eqref{est:W0:b}
and \eqref{GN:BS} that
\[
\left|\Lambda W_b \left[ (W_b+\varepsilon)^5-W_b^5-5W_b^4\varepsilon \right] \right| \lesssim |\Lambda W_b| \left(|W_b|^3\varepsilon^2 +|\varepsilon|^5\right)
\lesssimD \omega \varepsilon^2 + |s|^{-4}\varepsilon^2.
\]
Hence, it follows from \eqref{cut:onR}, \eqref{eL2B} and \eqref{BS2} that
\begin{equation*}
 |f_{4,3}| \lesssimD |s|^{-1} \|\varepsilon\|_{L^2_\loc}^2+|s|^{-5}\cN_B(\varepsilon)^2
 \lesssimD |s|^{-1} \int \varepsilon^2 \varphi_B+C_2^{\star}|s|^{-15}\log|s|.
\end{equation*}
We conclude the proof of \eqref{est:f4} by combining the estimates for $f_{4,1}$, $f_{4,2}$ and $f_{4,3}$. 

\smallskip
\noindent \emph{Estimate for $f_5$.} We claim that for $|s|$ large enough possibly depending on $B$ and $C_2^{\star}$, 
\begin{equation} \label{est:f5}
|f_5| \leq \frac{\mu_1}{8}\int \varepsilon^2 \varphi_B'+c_\delta |s|^{-14}. 
\end{equation}
We claim
\begin{equation}\label{on:Wb}
|\partial_s W_b|\lesssim |s|^{-2}.
\end{equation}
Indeed, by the definitions of $W_b$ and $W$, and then \eqref{eq:dsthj}, we have
\begin{align*}
\partial_s W_b & = \partial_s W + \partial_s (bP_b) \\
&= \sum_{j=1}^5V_j \partial_s(\theta^j)
+(\log\lambda)\sum_{k=4}^5 V_k^{\star}\partial_s(\theta^j)
+\frac{\lambda_s}{\lambda} \sum_{k=4}^5 V_k^{\star} \theta^j
+ b_s \left( \chi_b + \gamma y \partial_y \chi_b\right) P\\
&= \frac{\lambda_s}{\lambda} \left(\sum_{j=1}^5 jV_j\theta^{j-1} \Lambda \theta
+(\log\lambda)\sum_{j=4}^5 j V_j^{\star} \theta^{j-1} \Lambda \theta
+\sum_{k=4}^5 V_k^{\star} \theta^j \right)\\
&\quad +\frac{\sigma_s}{\lambda} \left(\sum_{j=1}^5 j V_j \theta^{j-1} \partial_y \theta
+(\log\lambda)\sum_{j=4}^5 j V_j^{\star} \theta^{j-1} \partial_y \theta\right)\\
&\quad +b_s\left( \chi_b + \gamma y \partial_y \chi_b\right) P.
\end{align*}
Using \eqref{on:theta} and \eqref{supp:dtheta}, we have
$|\theta|+|\Lambda\theta| \lesssim |s|^{-1}$ and $|\partial_y \theta| \lesssim |s|^{-3}$.
By \eqref{VVV}, this yields
\[
\left|\sum_{j=1}^5 jV_j\theta^{j-1} \Lambda \theta\right|
+\left|(\log\lambda)\sum_{j=4}^5 j V_j^{\star} \theta^{j-1} \Lambda \theta\right|
+\left|\sum_{k=4}^5 V_k^{\star} \theta^j \right|\lesssim |s|^{-1}
\]
and
\[
\left| \sum_{j=1}^5 j V_j \theta^{j-1} \partial_y \theta\right|
+\left|(\log\lambda)\sum_{j=4}^5 j V_j^{\star} \theta^{j-1} \partial_y \theta\right|\lesssim |s|^{-2}.
\]
Using \eqref{cons:BS.1}, \eqref{cons:BS.2}, \eqref{BS} and \eqref{BS2}, we have
$| \frac{\lambda_s}{\lambda}| \lesssim |s|^{-1}$, $ \left| \frac{\sigma_s}{\lambda}\right| \lesssim 1$
and $|b_s|\lesssimD C_2^\star |s|^{-6}\log|s|$.
Thus, \eqref{on:Wb} is proved.

Using \eqref{est:W0:b}, we have
$|W_b|^3 \lesssim \omega + |s|^{-3}$. Moreover, 
$
\left| (W_b+\varepsilon)^5 - W_b^5 - 5 W_b^4 \varepsilon \right|
\lesssim |W_b|^3 \varepsilon^2 + |\varepsilon|^5$, and it 
 follows from \eqref{cut:onR} and \eqref{GN:BS} that
\[
|f_5| \lesssim |s|^{-2} \|\varepsilon\|_{L^2_\loc}^2 + |s|^{-5} [\cN_B(\varepsilon)]^2 ,
\]
which yields \eqref{est:f5}. 

Finally, we conclude the proof of \eqref{time:local} by gathering \eqref{est:f1}, \eqref{est:f2}, \eqref{est:f3}, \eqref{est:f4} and \eqref{est:f5}. 

\smallskip

We turn to the proof of \eqref{coer:local}. We decompose $\cF$ as follows: 
\begin{align*}
 \lambda^2\cF &=\int \left[(\partial_y\varepsilon)^2 \psi_B + \varepsilon^2 \varphi_B-5Q^4\varepsilon^2 \right]
- \frac13 \int \left[ (W_b+\varepsilon)^6 - W_b^6 - 6 W_b ^5 \varepsilon-15Q^4\varepsilon^2\right] \psi_B \\ 
&=: \cF_1+\cF_2 .
\end{align*}
To bound $\cF_1$ from below, we rely on the coercivity of the linearized operator around the ground state $\cL$ under the orthogonality conditions \eqref{ortho} (see \eqref{coercivity.2}) and standard localisation arguments. Proceeding for instance as \cite[Appendix A]{MaMe} or as in \cite[Proof of Lemma 3.5]{CoMa1}, we deduce that there exists $\tilde{\mu}_2>0$ such that, for $B$ large enough, 
$\cF_1 \ge \tilde{\mu}_2 [\cN_B(\varepsilon)]^2 $.

To estimate $\cF_2$, we write 
\begin{equation*}
 (W_b+\varepsilon)^6 - W_b^6 - 6 W_b ^5 \varepsilon-15Q^4\varepsilon^2
 =(W_b+\varepsilon)^6 - W_b^6 - 6 W_b ^5 \varepsilon-15W_b^4\varepsilon^2-15\left( W_b^4-Q^4\right)\varepsilon^2 ,
\end{equation*}
so that 
\begin{equation*}
 \left|(W_b+\varepsilon)^6 - W_b^6 - 6 W_b ^5 \varepsilon-15Q^4\varepsilon^2 \right|
 \lesssim \left(|W_b|^3|\varepsilon|+\varepsilon^4\right)\varepsilon^2+\left| W_b^4-Q^4\right|\varepsilon^2.
\end{equation*}
Thus, it follows from \eqref{cut:onR}, and then \eqref{est:W0:b}, \eqref{cons:BS:Wb3Wb'} and \eqref{GN:BS} that
\begin{align*}
 \left|\cF_2 \right| \lesssim \left( \|W_b\|_{L^{\infty}}^3\|\varepsilon\|_{L^{\infty}}+\|\varepsilon\|_{L^{\infty}}^4+\left\| W_b^4-Q^4\right\|_{L^{\infty}}\right) \int \varepsilon^2\varphi_B 
 \lesssimD |s|^{-1}[\cN_B(\varepsilon)]^2 .
\end{align*}
The proof of \eqref{coer:local} follows from these estimates taking $|s|$ large enough
\end{proof}

\subsection{Closing the energy estimates}
\begin{lemma} \label{lemma:eps_loc:BS}
There exists $C_2^{\star}>1$ such that on $\cI$,
\begin{equation}\label{BS2:improved}
\cN_B(\varepsilon(s)) 
+\left(\int_{S_n}^s \left(\frac\tau s\right)^4 \|\varepsilon(\tau)\|_{L^2_\loc}^2\, d\tau \right)^{\frac 12} 
\leq \frac{C_2^{\star}}2 |s|^{-5} \log |s|. 
\end{equation}
\end{lemma}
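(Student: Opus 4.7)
The plan is to treat $\cF$ as a Lyapunov functional and integrate the differential inequality \eqref{time:local}, then use the coercivity \eqref{coer:local} to convert the resulting bound on $\cF$ into bounds on both $\cN_B(\varepsilon(s))$ and the time-weighted integral appearing in \eqref{BS2:improved}.

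First, by the initial condition \eqref{w:0} one has $\varepsilon(S_n) \equiv 0$ and $b(S_n) = 0$, so $\cF(S_n) = 0$. Integrating \eqref{time:local} on $[S_n,s]$ with this initial value yields
\[
\cF(s) + \mu_1 \int_{S_n}^s \frac{1}{\lambda^2(\tau)} \int \bigl((\partial_y\varepsilon(\tau))^2 + \varepsilon^2(\tau)\bigr)\varphi_B'\, d\tau \lesssimD C_2^\star \int_{S_n}^s |\tau|^{-7}(\log|\tau|)^2\, d\tau,
\]
and the right-hand side is $\lesssimD C_2^\star |s|^{-6}(\log|s|)^2$ since $u^{-7}(\log u)^2$ is integrable at $+\infty$ with mass concentrated near $u = |s|$.

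Next, I would apply \eqref{coer:local} together with the bootstrap bound $\lambda^2(s) \lesssim |s|^{-4}$ from \eqref{BS} to deduce
\[
[\cN_B(\varepsilon(s))]^2 \leq \mu_2^{-1}\lambda^2(s)\,\cF(s) \lesssimD C_2^\star |s|^{-10}(\log|s|)^2,
\]
hence $\cN_B(\varepsilon(s)) \lesssimD \sqrt{C_2^\star}\,|s|^{-5}\log|s|$. For the time integral, I would combine \eqref{eL2B} with $1/\lambda^2(\tau) \gtrsim \tau^4$ (again from \eqref{BS}) to lower-bound the integrated virial term by a multiple of $B^{-1}\int_{S_n}^s \tau^4 \|\varepsilon(\tau)\|_{L^2_\loc}^2\, d\tau$, whence
\[
\int_{S_n}^s \tau^4 \|\varepsilon(\tau)\|_{L^2_\loc}^2\, d\tau \lesssimD C_2^\star |s|^{-6}(\log|s|)^2.
\]
Multiplying by $s^{-4}$ and taking a square root produces the $\sqrt{C_2^\star}\,|s|^{-5}\log|s|$ bound on the time-integral term of \eqref{BS2:improved}.

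Combining both pieces, the left-hand side of \eqref{BS2:improved} is bounded by $C_\delta\sqrt{C_2^\star}\,|s|^{-5}\log|s|$ for some $C_\delta > 0$ depending on $\delta$ and $B$ but independent of $C_2^\star$ and $n$. Choosing $C_2^\star$ large enough that $C_\delta\sqrt{C_2^\star} \leq C_2^\star/2$, namely $C_2^\star \geq 4C_\delta^2$, gives the strict improvement and closes the bootstrap. The main obstacle, already handled in Proposition~\ref{pr:local-energy}, is that the source term in the monotonicity formula must decay faster than the natural size $|s|^{-6}(\log|s|)^2 \sim \lambda^2 \cdot [\cN_B(\varepsilon)]^2$ predicted by the bootstrap; the rate $|s|^{-7}(\log|s|)^2$ produced there provides exactly one extra power of $|s|$ of margin, which is what turns a factor $C_2^\star$ into $\sqrt{C_2^\star}$ and makes the bootstrap closable.
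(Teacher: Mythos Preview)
Your proof is correct and follows essentially the same route as the paper: integrate \eqref{time:local} from $S_n$ using $\cF(S_n)=0$, use the bootstrap bound $\lambda^2\sim s^{-4}$ together with \eqref{coer:local} and \eqref{eL2B} to extract both $[\cN_B(\varepsilon)]^2$ and $s^{-4}\int_{S_n}^s \tau^4\|\varepsilon(\tau)\|_{L^2_\loc}^2\,d\tau$, and then choose $C_2^\star$ large enough so that $c_\delta\sqrt{C_2^\star}\le C_2^\star/2$.
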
 

\begin{proof}

Let $s\in\cI$.
Integrating~\eqref{time:local} on $[S_n,s]$ and using $\cF(S_n)=0$, we find
\[
\cF(s) + \int_{S_n}^s \tau^4 \int \left( (\partial_y \varepsilon)^2 + \varepsilon^2 \right) \varphi_B' d\tau
\lesssimD C_2^\star s^{-6} (\log|s|)^2 .
\]
Thus, by \eqref{eL2B} and~\eqref{coer:local}, we obtain
\[
[\cN_B(\varepsilon(s))]^2 
+s^{-4} \int_{S_n}^s \tau^4 \|\varepsilon(\tau)\|_{L^2_\loc}^2 d\tau 
\lesssimD C_2^\star s^{-10} (\log |s|)^2.
\]
By choosing $C_2^\star>1$ large enough, this implies \eqref{BS2:improved}.
\end{proof}

We complete the proof of Proposition~\ref{prop:bootstrap}. The constant $C_1^{\star}>1$ has been fixed as in Lemma~\ref{lemma:param:BS}, and the constant $C_2^{\star}>1$ is now chosen as in Lemma~\ref{lemma:eps_loc:BS}.
Assuming by contradiction that $S_n^{\star}<s_0$, by continuity, it follows that at least one of the estimates in \eqref{eq:H1}, \eqref{BS} and \eqref{BS2} is reached, which is absurd by Lemmas~\ref{lemma:param:BS}, \ref{le:mass-ener} and \ref{lemma:eps_loc:BS}.

\section{Proof of the main result}\label{S:6}

We follow the presentation in \cite[Section 3.8]{CoMa1} for the compactness argument.
By Proposition~\ref{prop:bootstrap}, the sequence $\{u_n\}$ of solutions of \eqref{gkdv}
defined in Section~\ref{bootstrap:sec} satisfy the uniform estimates~\eqref{BS}-\eqref{BS2} on $[S_n,s_0]$ where $s_0<-1$ is independent of $n$. We rewrite these estimates in the time variable $t$.
From \eqref{BS}, $\lambda_n^3(s)=|s|^{-6}+\cO(|s|^{-7} \log|s|)$ so that \eqref{defts} implies
\[
t-T_n=\int_{S_n}^s \lambda_n^3(s') ds'
=\frac 15\left( \frac1{|s|^5}-\frac1{|S_n|^5}\right)+\cO\left(\frac{\log|s|}{|s|^6}\right)
\]
where $T_n=\frac 1{5|S_n|}$. It follows that
$
t=\frac1{5|s|^5} + \cO\left( \frac{\log|s|}{|s|^6}\right)$, and so
\begin{equation}\label{time}
\frac 1{|s|^5} = 5 t +\cO\left( t^\frac 65 |\log t|\right).
\end{equation}
Thus, estimates~\eqref{BS} and \eqref{GN:BS} on $[S_n,s_0]$ imply the existence of $t_0>0$ independent of $n$ such that on $[T_n,t_0]$, it holds
\begin{equation}\label{BS:t}\begin{aligned}
|b_n(t)|&\lesssimD t |\log t|,\\
\left| \lambda_n(t) - (5t)^{\frac 25}\right| &\lesssimD t^{\frac 35}|\log t|,\\
\left| \sigma_n(t) - (5t)^{\frac 15} \right| &\lesssimD t^{\frac 25}|\log t|,\\
\|\varepsilon_n(t)\|_{L^2} \lesssim \delta^\frac 12,\quad &
\|\partial_y \varepsilon_n(t)\|_{L^2} \lesssim \delta^{-\frac 12} t^{\frac 25}.
\end{aligned}
\end{equation}These estimates imply that the sequence $\{u_n(t_0)\}$ is bounded in $H^1$ and that 
 the sequences $\{\lambda_n(t_0)\}$, $\{\sigma_n(t_0)\}$ and $\{1/\lambda_n(t_0)\}$ are bounded in $\RR$.
Let
\[
\tilde u_n (x) = \lambda_n^\frac 12(t_0) u_n (t_0,\lambda_n(t_0) x + \sigma_n(t_0))
=W_{b,n}(t_0,x) + \varepsilon_n(t_0,x) .
\]
Then $\{\tilde u_n\}$ is also bounded in $H^1$ (see \eqref{est:W0:b} and \eqref{est:W12:b}).
Therefore, there exist subsequences $\{\tilde u_{n_k}\}$, $\{\lambda_{n_k}\}$, $\{\sigma_{n_k}\}$, $\{b_{n_k}\}$ of $\{\tilde u_n\}$, $\{\lambda_n\}$, $\{\sigma_n\}$, $\{b_n\}$ and $\tilde u_0\in H^1$, $\lambda_\infty>0$, $\sigma_\infty\in \RR$, $b_\infty\in\RR$
such that $\tilde u_{n_k} \rightharpoonup \tilde u_0$ weakly in $H^1$
and $\lambda_{n_k} \to \lambda_\infty$, $\sigma_{n_k} \to \sigma_\infty$, $b_{n_k} \to b_\infty$ as $k\to \infty$.

For any $k\geq 0$, let $\tilde u_k(t)$ be the maximal solution of \eqref{gkdv} such that $\tilde u_k(0)=\tilde u_{n_k}$, and
let $(\tilde\lambda_k,\tilde\sigma_k,\tilde b_k,\tilde\varepsilon_k)$ denote its decomposition as given by Section~\ref{S:3.2}.
By the scaling invariance
\begin{equation*}
 \tilde u_k(t,x)=\lambda_{n_k}^{\frac12}(t_0) u_{n_k}(t_0+\lambda_{n_k}^3(t_0)t,\lambda_{n_k}(t_0)x+\sigma_{n_k}(t_0)), \quad \forall \, t \in \left[-\frac{t_0-T_{n_k}}{\lambda_{n_k}^3(t_0)},0\right] .
\end{equation*}
Hence, we deduce from the uniqueness of the decomposition that
\begin{gather*}
\tilde\lambda_k(t) = \frac{\lambda_{n_k}(t_0+\lambda_{n_k}^3(t_0) t)}{\lambda_{n_k}(t_0)},\quad
\tilde\sigma_k(t)=\frac{\sigma_{n_k}(t_0+\lambda_{n_k}^3(t_0) t) - \sigma_{n_k}(t_0)}{\lambda_{n_k}(t_0)},\\
\tilde b_k(t)=b_{n_k}(t_0+\lambda_{n_k}^3(t_0) t),\quad \tilde\varepsilon_k(t)=\varepsilon_{n_k}(t_0+\lambda_{n_k}^3(t_0)t).
\end{gather*}
Let $t_1\in (0,\frac{t_0}{\lambda_\infty^3})$ and apply \cite[Lemma 2.10]{CoMa1} on $[-t_1,0]$.
It follows that the solution~$\tilde u$ of~\eqref{gkdv} with initial data $\tilde u_0$ exists on the time interval $(-\frac{t_0}{\lambda_\infty^3},0]$,
and its decomposition $(\tilde \lambda,\tilde \sigma,\tilde b,\tilde \varepsilon)$ is the limit of $\{(\tilde\lambda_k,\tilde\sigma_k,\tilde b_k,\tilde\varepsilon_k)\}$
 as $k\to \infty$ (in the weak $H^1$ sense for $\{\tilde\varepsilon_k\}$).
 
Now, let $T=5^3 t_0$ and define the solution $u$ of \eqref{gkdv} on $[0,T)$ by
\[
u(t,x) = \frac1{(5\lambda_\infty)^{\frac 12}} \tilde u \left( \frac{-t}{(5\lambda_\infty)^3},\frac{-x-\sigma_\infty}{5\lambda_\infty}\right).
\]
Denote by $(\lambda,\sigma, b,\varepsilon)$ the parameters of its decomposition on $[0,T)$,
\[
u(t,x) = \frac 1{\lambda^\frac 12(t)} \left( W_{b} \left(t, \frac{x-\sigma(t)}{\lambda(t)}\right)
+\varepsilon\left(t, \frac{x-\sigma(t)}{\lambda(t)}\right)\right)
\]
where, from~\eqref{BS:t}, for any $t\in [0,T)$,
\begin{equation}\label{BS:tilde}
\begin{aligned}
|b(t)|&\lesssimD (T-t) \left|\log (T-t)\right|,\\
\left|\lambda(t) - (T-t)^{\frac 25}\right| &\lesssimD (T-t)^{\frac 35}\left|\log (T-t)\right|,\\
\left|\sigma(t) + 5 (T-t)^{\frac 15} \right| &\lesssimD (T-t)^{\frac 25}\left|\log (T-t)\right|,\\
\|\varepsilon(t)\|_{L^2} \lesssim \delta^\frac 12,\quad&
\|\partial_y \varepsilon(t)\|_{L^2} \lesssim \delta^{-\frac 12} (T-t)^{\frac 25}.
\end{aligned}
\end{equation}
In particular, $u$ is an $H^1$ solution of \eqref{gkdv} blowing up at time $t=T$ at the point $0$.
To complete the proof of Theorem~\ref{th:1}, we write
\[
u(t,x) - \frac 1{(T-t)^{\frac 15}} Q\left( \frac {x-\sigma(t)}{(T-t)^{\frac 25}}\right) = r_1(t,x)+r_2(t,x)
\]
where
\begin{align*}
r_1(t,x) 
&= \frac 1{\lambda^\frac 12(t)} Q\left( \frac {x-\sigma(t)}{\lambda(t)}\right)
- \frac 1{(T-t)^{\frac 15}} Q\left( \frac {x-\sigma(t)}{(T-t)^{\frac 25}}\right)\\
r_2(t,x)&= \frac 1{\lambda^\frac 12(t)} v\left( \frac {x-\sigma(t)}{\lambda(t)}\right),
\end{align*}
and
\[
v(t,y) = W_{b}(t,y) - Q(y) + \varepsilon(t,y)
=V(t,y) + b(t) P_{b}(t,y) + \varepsilon(t,y),
\]
and we prove estimates on $r_1$, $r_2$.
We find setting
$\bar \lambda(t) = {(T-t)^\frac 25}/\lambda(t)$ and using \eqref{BS:tilde},
\begin{align*}
\|r_1(t)\|_{L^2} & =\left\| \bar \lambda^\frac 12(t) Q (\bar\lambda(t)\, \cdot \, ) - Q\right\|_{L^2}
\lesssim |\bar \lambda(t) -1| \lesssim (T-t)^{\frac 15} |\log (T-t)|,\\
(T-t)^{\frac 25} \|\partial_x r_1(t)\|_{L^2} & = \left\| \bar \lambda^\frac 32(t) Q' (\bar\lambda(t) \,\cdot \, ) - Q'\right\|_{L^2}
\lesssim |\bar \lambda(t) -1| \lesssim (T-t)^{\frac 15} |\log (T-t)|.
\end{align*}
Next, by \eqref{est:V0}, \eqref{def:P:b} and \eqref{BS:tilde},
\[
\|v\|_{L^2} \leq \|V\|_{L^2}+\|b P_{b}\|_{L^2}+\|\varepsilon\|_{L^2}
\lesssim \delta^\frac 12+|b|^{1-\frac\gamma2}\lesssim \delta^\frac 12.
\]
Second, by \eqref{est:V0}, \eqref{def:P:b} and \eqref{BS:tilde},
\[
\|\partial_y v\|_{L^2} \leq \|\partial_y V\|_{L^2}+\|b \partial_y P_{b}\|_{L^2}+\|\partial_y \varepsilon\|_{L^2}
\lesssim \lambda^{\frac 12} +|b|+ \delta^{-\frac 12} (T-t)^{\frac 25} \lesssimD (T-t)^{\frac 15}.
\]
Thus,
\[
\|r_2(t)\|_{L^2}=\|v(t)\|_{L^2}\lesssim \delta^\frac 12,\quad
(T-t)^\frac 25 \|\partial_x r_2(t)\|_{L^2} = \bar \lambda(t) \|\partial_y v(t)\|_{L^2}\lesssimD (T-t)^{\frac 15}.
\]

\section*{Acknowledgments} 
D.P. was supported by a Trond Mohn Foundation grant.
Part of this work was done while Y.M. was visiting the Department of Mathematics of the University of Bergen, whose hospitality is acknowledged.
The authors would like to thank Frank Merle for stimulating discussions and encouragement.

\end{document}